\documentclass[a4paper,reqno,10pt]{amsart}
\usepackage[utf8]{inputenc}
\usepackage{amsfonts}
\usepackage{amsmath}
\usepackage{amsthm}
\usepackage{amssymb}
\usepackage{graphicx} 
\usepackage[dvipsnames]{xcolor}
\usepackage{mlmodern}
\usepackage[linktocpage,bookmarksopen=true]{hyperref}

\newtheoremstyle{teoremas}
{10pt}
{10pt}
{\itshape}
{}
{\bfseries}
{}
{.5em}
{}

\theoremstyle{teoremas}
\newtheorem{theorem}{Theorem}[section]

\newtheorem{corollary}[theorem]{Corollary}

\newtheorem{lemma}[theorem]{Lemma}
\newtheorem{proposition}[theorem]{Proposition}

\numberwithin{equation}{section}
\newtheorem{algorithm}{Algorithm}[section]

\newtheoremstyle{definition}
{10pt}
{10pt}
{}
{}
{\bfseries}
{}
{.5em}
{}

\theoremstyle{definition}
\newtheorem{definition}[theorem]{Definition}

\newtheorem{conjecture}[theorem]{Conjecture}

\newtheorem{example}[theorem]{Example}

\newtheorem{remark}[theorem]{Remark}

\newcommand\bbinom[2]%
{\mathchoice{\left(\kern-0.48em{\binom{#1}{#2}}\kern-0.48em\right)}
            {\bigl(\kern-0.3em{\binom{#1}{#2}}\kern-0.3em\bigr)}
            {\bigl(\kern-0.3em{\binom{#1}{#2}}\kern-0.3em\bigr)}
            {\bigl(\kern-0.3em{\binom{#1}{#2}}\kern-0.3em\bigr)}}

\usepackage{tikz}
\usetikzlibrary{automata, arrows}
\usepackage[scr=boondoxo]{mathalfa}
\usepackage{tcolorbox}
\tcbuselibrary{theorems}
\newtcbtheorem
  []
  {guide}
  {Guide to the proof}
  {%
    colback=white!5,
    colframe=BrickRed!45!white,
    fonttitle=\bfseries,
  }
  {guide}
\newtcbtheorem
  []
  {protocol}
  {Protocol}
  {%
    colback=white!5,
    colframe=NavyBlue!45!white,
    fonttitle=\bfseries,
  }
  {prot}

\usepackage[shortlabels]{enumitem}
\usepackage{multicol}
\usepackage{caption}

\hypersetup{
    colorlinks = true,
    linkbordercolor = {white},
    linkcolor = {NavyBlue},
    anchorcolor = {black},
    citecolor = {NavyBlue},
    filecolor = {cyan},
    menucolor = {NavyBlue},
    runcolor = {cyan},
    urlcolor = {NavyBlue}
}

\usepackage[colorinlistoftodos]{todonotes}

\usepackage{newverbs}
\newverbcommand{\code}{\color{Green}}{}

\usepackage[margin=1.25in]{geometry} 
\definecolor{color_petroleo}{HTML}{9400e3}
\definecolor{color_gris}{HTML}{9dA1A0}
\definecolor{color_car}{HTML}{FF006A}
\definecolor{color_car_luky}{HTML}{00e30c}

\newcommand{\carroTikZ}[6]{
    
    \pgfmathsetmacro{\ys}{-1 * #4}
    \pgfmathsetmacro{\xs}{#4}

    \begin{scope}[shift={(#2,#3)}, rotate=#5, yscale = \ys, xscale = \xs, scale=0.02 ]
        \begin{scope}[shift={(-198,-156.5)}]
            \draw [draw opacity=0][fill=#1  ,fill opacity=1 ]   (127.27,155.47) .. controls (135.27,130.47) and (150.93,109.47) .. (189.27,108.13) .. controls (227.6,106.8) and (231.93,131.8) .. (239.93,136.47) .. controls (247.93,141.13) and (272.6,137.8) .. (273.27,168.47) .. controls (282.27,171.13) and (277.93,183.8) .. (267.6,181.13) .. controls (218.27,180.8) and (194.93,180.8) .. (151.27,181.47) .. controls (122.6,180.8) and (107.93,182.13) .. (125.27,163.47) ;
            \draw  [draw opacity=0][fill={rgb, 255:red, 0; green, 0; blue, 0 }  ,fill opacity=1 ] (136.93,175.47) .. controls (136.93,165.53) and (145.29,157.47) .. (155.6,157.47) .. controls (165.91,157.47) and (174.27,165.53) .. (174.27,175.47) .. controls (174.27,185.41) and (165.91,193.47) .. (155.6,193.47) .. controls (145.29,193.47) and (136.93,185.41) .. (136.93,175.47) -- cycle ;
            \draw  [draw opacity=0][fill={rgb, 255:red, 218; green, 215; blue, 215 }  ,fill opacity=1 ] (145.93,175.47) .. controls (145.93,170.32) and (150.26,166.15) .. (155.6,166.15) .. controls (160.94,166.15) and (165.27,170.32) .. (165.27,175.47) .. controls (165.27,180.61) and (160.94,184.79) .. (155.6,184.79) .. controls (150.26,184.79) and (145.93,180.61) .. (145.93,175.47) -- cycle ;
            \draw  [draw opacity=0][fill={rgb, 255:red, 0; green, 0; blue, 0 }  ,fill opacity=1 ] (223.6,175.13) .. controls (223.6,165.19) and (231.96,157.13) .. (242.27,157.13) .. controls (252.58,157.13) and (260.93,165.19) .. (260.93,175.13) .. controls (260.93,185.07) and (252.58,193.13) .. (242.27,193.13) .. controls (231.96,193.13) and (223.6,185.07) .. (223.6,175.13) -- cycle ;
            \draw  [draw opacity=0][fill={rgb, 255:red, 218; green, 215; blue, 215 }  ,fill opacity=1 ] (232.6,175.13) .. controls (232.6,169.99) and (236.93,165.81) .. (242.27,165.81) .. controls (247.61,165.81) and (251.93,169.99) .. (251.93,175.13) .. controls (251.93,180.28) and (247.61,184.45) .. (242.27,184.45) .. controls (236.93,184.45) and (232.6,180.28) .. (232.6,175.13) -- cycle ;
            \draw   (177.6,156.8) .. controls (177.6,145.57) and (186.7,136.47) .. (197.93,136.47) .. controls (209.16,136.47) and (218.27,145.57) .. (218.27,156.8) .. controls (218.27,168.03) and (209.16,177.13) .. (197.93,177.13) .. controls (186.7,177.13) and (177.6,168.03) .. (177.6,156.8) -- cycle ;
            \draw  [fill={rgb, 255:red, 230; green, 230; blue, 230 }  ,fill opacity=1 ] (151,132.72) .. controls (151,132.65) and (151,132.59) .. (151,132.53) .. controls (151,120.75) and (168.31,111.2) .. (189.67,111.2) .. controls (211.02,111.2) and (228.33,120.75) .. (228.33,132.53) .. controls (228.33,132.75) and (228.33,132.97) .. (228.32,133.19) -- cycle ;
            \draw  [draw opacity=0][fill=#1  ,fill opacity=1 ] (193.33,109.53) -- (197.43,109.53) -- (197.43,135.2) -- (193.33,135.2) -- cycle ;
            \draw    (193,111) -- (193,133) ;
            \draw    (198,111) -- (198,133) ;
            \node[draw=black,circle, fill = {rgb, 255:red, 255; green, 255; blue, 255 },fill opacity=0.8,minimum size=2.3em,inner sep=1pt, scale = #4, rotate = #5]  at (198,156.5) {};
            \node[scale = #4, rotate = #5] at (198,156.5) {\huge #6};
        \end{scope}
    \end{scope}
}

\newcommand{\spotCircular}[7]{
  \begin{scope}[shift={(#1,#2)}]
    \path[fill=black, draw=none, rounded corners=3pt]
      (#5:#3) arc (#5:#6:#3)
      -- (#6:{#3 + #4}) arc (#6:#5:{#3 + #4})
      -- cycle;
    \pgfmathsetmacro{\angleMid}{(#5 + #6)/2}
    \pgfmathsetmacro{\radiusMid}{#3 + #4/2}
    \node[text = white, rotate = \angleMid - 90] at (\angleMid:\radiusMid) {#7};

  \end{scope}
}

\newcommand{\preferenciaCircular}[7]{%
  \begin{scope}[shift={(#1,#2)}]
    \path[fill=color_petroleo, draw=none, rounded corners=3pt]
      (#5:#3) arc (#5:#6:#3)
      -- (#6:{#3 + #4}) arc (#6:#5:{#3 + #4})
      -- cycle;
    \pgfmathsetmacro{\angleMid}{(#5 + #6)/2}
    \pgfmathsetmacro{\radiusMid}{#3 + #4/2}
    \node[text = white, rotate = \angleMid - 90] at (\angleMid:\radiusMid) {#7};

  \end{scope}
}

\DeclareMathOperator{\vol}{vol}

\newcommand{\ehr}{\operatorname{ehr}}
\renewcommand{\vol}{\operatorname{vol}}

\newcommand{\Z}{\mathbb{Z}}

\newcommand{\Pf}{\operatorname{PF}}
\newcommand{\zz}{\mathbf{z}}
\newcommand{\yy}{\mathbf{y}}
\renewcommand{\ss}{{\bf s}}
\newcommand{\aaa}{{\bf a}}
\newcommand{\yPerm}{\mathcal{P}}
\newcommand{\Draco}{\operatorname{Drac}}

\newcommand{\ExtPerm}[2]{\mathfrak{S}_{#1,#2}}
\newcommand{\mulPerm}[1]{\mathfrak{S}_{#1}}

\newcommand{\alg}{\operatorname{alg}}

\newcommand{\blucky}{\operatorname{lucky}^{\circlearrowright}}

\newcommand{\Lucky}{\operatorname{Lucky}}
\newcommand{\lucky}{\operatorname{lucky}}
\newcommand{\out}{\operatorname{out}}

\renewcommand{\emptyset}{\varnothing}

\usepackage{graphicx}

\title[Luck and magic for Pitman--Stanley polytopes and parking functions]{Luck and magic for Pitman--Stanley polytopes\\ and parking functions}

\address{(N. Avila \& A. H. Morales)
Universit\'e du Qu\'ebec \`a Montr\'eal, Montr\'eal, Canada}
\email{morales\_borrero.alejandro@uqam.ca}
\email{avila\_ramirez.nicolas@courrier.uqam.ca}

\author{Nicolas Avila, Luis Ferroni, Alejandro H. Morales}

\address{(L. Ferroni)
    Universit\`a di Pisa, Pisa, Italy
}
\email{luis.ferroni@unipi.it}

\subjclass[2020]{05A17, 05B35, 52B20, 52B40}

\thanks{LF is a member of the INDAM research group GNSAGA. NA and AHM are partially supported by the NSERC Discovery grant RGPIN-2024-06246 and the FRQNT team grant 10.69777/341288. }

\begin{document}
\allowdisplaybreaks

\begin{abstract}
    Motivated by the combinatorics of parking functions and their several generalizations, we study the Ehrhart theory of Pitman--Stanley polytopes. We prove a strong positivity phenomenon called \emph{magic positivity} for the Ehrhart polynomials of these polytopes, which in turn implies that their $h^*$-polynomials are real-rooted (and thus log-concave and unimodal). Our result is achieved by interpreting the coefficients of these Ehrhart polynomials in the \emph{magic basis} in terms of the number of \emph{lucky cars} in a modified parking protocol. Furthermore, we address the magic positivity problem for $\yy$-generalized permutohedra and also discuss a \emph{magic} combinatorial interpretation for them, under the assumption that the input parameters are sufficiently large.
\end{abstract}

\keywords{}

\maketitle

\section{Introduction}

\subsection{Overview}

In an influential paper, Pitman and Stanley \cite{Pitman_Stanley_1999} introduced a class of lattice polytopes, now commonly called \emph{Pitman--Stanley polytopes}. The precise definition of a Pitman--Stanley polytope is as follows. Consider a vector $\mathbf{y}\in \mathbb{Z}_{\geq  0}^n$, and define
    \[ \Pi_n(\mathbf{y}) := \left\{ \mathbf{x} \in \mathbb{R}_{\geq 0}^n : \enspace\sum_{i=1}^k x_i \leq \sum_{i=1}^k y_i \text{ for each  $k=1,\ldots,n$}\right\}.\]
(We customarily use the non-bold names $x_i$ and $y_i$ to denote the $i$-th coordinate of the boldface vectors $\mathbf{x}$ and $\mathbf{y}$ respectively.) These polytopes are especially relevant due to their appearance in diverse contexts within probability, statistics, algebraic combinatorics, and discrete geometry. Their face structure, volume computation, and lattice point enumerations were addressed in the original work by Pitman and Stanley, but many aspects of the theory are still attracting mathematicians to this topic (see \cite{KonvPak,KonvPak2,Baldoni_Vergne_2008,genPFPoly,gen_PS1}).

In the present article we shall be concerned with the enumeration of lattice points in (dilations of) Pitman--Stanley polytopes.  A celebrated theorem of Ehrhart \cite{ehrhart} establishes that the number of lattice points in the $m$-th dilation of a lattice polytope $\mathcal{P}\subseteq \mathbb{R}^n$ is interpolated by a polynomial of degree $d=\dim \mathcal{P}$. In other words, there exists a polynomial $\ehr_{\mathcal{P}}\in \mathbb{R}[t]$ such that
    \[ \ehr_{\mathcal{P}}(t) := |t\mathcal{P} \cap \mathbb{Z}^n|, \quad \text{ for every positive integer $t$.}\]
There is considerable interest in studying the coefficients of the Ehrhart polynomial of a polytope. If we write 
\begin{equation} \label{eq:standard-basis}
\ehr_{\mathcal{P}}(t) = a_d\, t^d + a_{d-1}\, t^{d-1} + \cdots + a_1\, t + a_0,
\end{equation}
then the following facts are known:
    \begin{itemize}
        \item The leading coefficient $a_d$ equals the relative volume of $\mathcal{P}$.
        \item The second leading coefficient $a_{d-1}$ equals half the relative volume of the boundary $\partial \mathcal{P}$.
        \item The constant term $a_0$ equals the Euler characteristic of the Euclidean space, i.e., $1$.
    \end{itemize}

A consequence of the interpretation for $a_d$ is that the Ehrhart polynomial of $\mathcal{P}$ provides a discrete analog for the notion of volumes. The computation of volumes of polytopes constitutes by itself a recurring and crucial problem in many areas of mathematics. 

Due to the geometric interpretations for the three coefficients above, one may be inclined to believe that something analogous happens for the remaining coefficients. In reality, the coefficients $a_1, a_2,\ldots, a_{d-2}$ are in general very complicated. There exist general closed formulas by McMullen \cite{mcmullen}, but they depend on certain choices and are not easy to state. Not even the sign pattern for these coefficients is fully understood. As it turns out, they can be negative (and all simultaneously at once, see \cite{hibi-higashitani-yoshida-tsuchiya}). When all the coefficients $a_1,\ldots,a_{d-2}$ are non-negative, we say that $\mathcal{P}$ is Ehrhart positive. We recommend \cite{liu, ferroni-higashitani} for detailed surveys addressing Ehrhart positivity.

One of the results of Pitman and Stanley is a closed formula that shows that for any vector $\mathbf{y}$ the polytope $\Pi_n(\mathbf{y})$, which is $n$-dimensional if $y_1>0$, is Ehrhart positive. Specifically, their formula reads as follows:
    \[ \ehr_{\Pi_n({\bf y})}(t) = 
 \sum_{{\bf s} \in \mathcal{I}_n} \binom{t\cdot y_1+s_1}{s_1} \binom{t\cdot y_2+s_2-1}{s_2}\cdots \binom{t\cdot y_n+s_n-1}{s_n},
    \]
where the sum runs over a set of vectors of non-negative integers (see Corollary~\ref{coro:ehr-PS} below for the details). Ehrhart positivity follows immediately from having a formula of this kind: the reason being that each summand is itself a polynomial with non-negative coefficients in $t$. It is nonetheless difficult to extract nice formulas for each individual coefficient of this polynomial, and no \emph{nice} combinatorial interpretation is known for them (after multiplying by $(n-1)!$ which therefore turns each coefficient into a non-negative integer).

A considerable strengthening of Ehrhart positivity is the positivity of the Ehrhart polynomial when written in an alternative basis. If instead of using the monomial basis $1, t, t^2,\ldots, t^d$, we use the basis $\{t^i(1+t)^{d-i}\}_{i=0}^d$, we can write
    \begin{equation} \label{eq:magic-basis}
    \ehr_{\mathcal{P}}(t) = \sum_{i=0}^d c_i\, t^i(1+t)^{d-i} = c_0\, t^0(1+t)^d + c_1 t^1 (1+t)^{d-1} + \cdots +c_d t^d(1+t)^{0}.
    \end{equation}
for some real numbers $c_0,\ldots,c_d$. The nonnegativity of all the $c_i$ in equation~\eqref{eq:magic-basis} implies the nonnegativity of the $a_i$ in equation~\eqref{eq:standard-basis}. In this case, the coefficients $c_i$ refine the volume of $\mathcal{P}$. Following Ferroni and Higashitani \cite{ferroni-higashitani}, we say that $\ehr_{\mathcal{P}}(t)$ is \emph{magic positive} if $c_i\geq 0$ for $i=0,\ldots,d$. The main contribution in this article is proving that Pitman--Stanley polytopes have Ehrhart polynomials that are magic positive. This result was announced in \cite{ferroni-higashitani}, and our extended abstract \cite{avila-ferroni-morales-abstract} containing some details of the proof has circulated among colleagues in the past months.

The specific strategy we employ to prove this result yields in fact something far more interesting than the mere nonnegativity of the coefficients in equation~\eqref{eq:magic-basis}. As it turns out, there is a nice \emph{combinatorial interpretation} for these coefficients. In order to state our interpretation, we need to rely on one of the many frameworks in which Pitman--Stanley polytopes arise: that of $\mathbf{y}$-parking functions (see Definition~\ref{def:y-parking-functions} below for a precise statement), a generalization of classical parking functions, whose number gives the normalized volume of $\Pi_n(\yy)$ (see Theorem~\ref{thm:vol PS in terms of PF}).

\begin{theorem}\label{thm:main-intro} 
    Let $\Pi_n(\mathbf{y})$ be a Pitman--Stanley polytope for $\mathbf{y} \in \mathbb{Z}^n_{>0}$. Let 
        \[ \ehr_{\Pi_n(\mathbf{y})}(t) = \sum_{i=0}^n c_i\, t^i(1+t)^{n-i}.\]
    Then, for each $0\leq i \leq n$, the number $n!\cdot c_i$ enumerates the $\yy$-parking functions having exactly $i$ lucky cars without the first available space. In particular, each $c_i$ is a nonnegative number, and therefore magic positivity holds.
\end{theorem}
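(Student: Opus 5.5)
The plan is to prove the polynomial identity
\[ n!\cdot \ehr_{\Pi_n(\yy)}(t) \;=\; \sum_{w} t^{\ell(w)}(1+t)^{\,n-\ell(w)}, \]
where $w$ runs over all $\yy$-parking functions of length $n$ and $\ell(w)$ is the number of lucky cars of $w$ in the modified protocol, i.e.\ lucky cars other than the one taking the first available space. The statement then follows by comparing coefficients, because $\{t^i(1+t)^{n-i}\}_{i=0}^n$ is a basis of the polynomials of degree at most $n$. Both sides are polynomials in $t$, so it is enough to check the identity at positive integers $t$. There the right-hand side counts pairs $(w,\kappa)$, where $\kappa$ colours each lucky car with one of $t$ colours and each unlucky car with one of $t+1$ colours.

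For the left-hand side I would start from the Pitman--Stanley formula, Corollary~\ref{coro:ehr-PS}:
\[ n!\,\ehr_{\Pi_n(\yy)}(t)=\sum_{\ss\in\mathcal I_n}\binom{n}{s_1,\dots,s_n}\prod_{j} s_j!\binom{t y_j+s_j-[j\neq 1]}{s_j}. \]
The factor $s_j!\binom{ty_j+s_j-1}{s_j}=ty_j(ty_j+1)\cdots(ty_j+s_j-1)$ is a rising factorial. For $j=1$ the factor is $(ty_1+1)\cdots(ty_1+s_1)$, and this shift by one is exactly what the removal of ``the first available space'' should absorb.

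On the parking side, I would group $\yy$-parking functions according to the vector $\ss$ recording how many cars prefer a spot in each block of $y_j$ consecutive spots. The first step is to check that the admissible $\ss$ are exactly the vectors in $\mathcal I_n$, using the characterization of $\yy$-parking functions (Definition~\ref{def:y-parking-functions}) through the inequalities $b_i\le y_1+\cdots+y_i$ on the sorted sequence. The multinomial coefficient accounts for choosing which cars prefer which block. What remains is, for each block, to show
\[ \sum_{\text{preferences of the } s_j \text{ cars in block } j} t^{\#\text{lucky}}(1+t)^{\#\text{unlucky}} \;=\; \prod_{k=0}^{s_j-1}(ty_j+k), \]
with the shifted version for $j=1$.

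I would prove the per-block identity by an insertion argument. The cars of block $j$ are inserted in order. When the $(k+1)$-st car arrives, $k$ spots of the block's ``region'' are already taken, so it has $y_j$ possible preferred spots. The factor $ty_j+k$ should be split as (number of free preferred spots)$\cdot t$, for the choices making the car lucky, plus (number of occupied preferred spots)$\cdot(1+t)$, for the choices making it unlucky, using $ty+k=(y-m)t+m(1+t)+(k-m)$. The leftover $k-m$ has to be supplied by cars that overflowed from earlier blocks. This is the main obstacle: spill-over across blocks means the occupied spots of a block are not just its own cars, so the per-block factorization fails naively.

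My first attempt to handle this would be to reorganize the sum not by preference blocks but by the final parking outcome. I would use the recursive structure of $\mathcal I_n$, peeling off the last block and inducting on $n$, and show that the generating function of the modified lucky statistic satisfies the same recursion in $y_n$ as the Ehrhart polynomial. The check would be at the level of the recursion $\ehr_{\Pi_n(\yy)}(t)=\sum_{x_n}\ehr_{\Pi_{n-1}(\yy')}(t)$, with $\yy'$ obtained by shifting the slack from the last coordinate. The base case $n=1$ is immediate: $\ehr=ty_1+1=t\cdot y_1+\ldots$, matching one car whose only non-lucky configuration is the one at the first available space.
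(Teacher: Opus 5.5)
There is a genuine gap. The step you flag as the main obstacle is the entire content of the theorem, and neither route you sketch gets past it.

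The per-block insertion cannot work. Neither can the coarser grouping by the preference content $\ss$, because the identity is false composition by composition. Take $\yy=(1,1)$ and $\ss=(2,0)$. The only word is $(1,1)$, which has no lucky car when space $1$ is blocked, so it contributes $(1+t)^2$. Your factor is $2!\binom{t+2}{2}=(t+1)(t+2)$. The two sides agree only after summing over $\ss\in\mathcal{I}_n$, so any correct argument must move words between different compositions. (Your insertion argument is essentially the paper's proof for the circular block protocol with large $y_i$, where there is no spill-over.)

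The fallback induction is not workable as stated. Fixing $x_n$ in $t\Pi_n(\yy)$ leaves a polytope whose last parameter $\min\{ty_{n-1},\,t(y_{n-1}+y_n)-x_n\}$ is not $t$ times a fixed integer, and the number of summands depends on $t$. So the inductive hypothesis does not apply to the pieces, and you give no matching decomposition of the lucky statistic.

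Nothing in your plan uses $y_i\ge 1$. Yet for $\yy=(1,0,2)$ the Pitman--Stanley formula, your rising factorials and the grouping by $\mathcal{I}_n$ all remain valid while magic positivity fails. Positivity has to enter exactly where spill-over is controlled.

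Finally, fix the statistic. ``Without the first available space'' means space $1$ is permanently blocked, which displaces cars and changes which other cars are lucky. It does not mean discarding the car parked in space $1$. With your reading, the words $(1,1),(1,2),(2,1)$ for $\yy=(1,1)$ have $\ell=0,1,1$, giving $3t^2+4t+1$. But $2\,\ehr_{\Pi_2(1,1)}(t)=3t^2+5t+2$, which comes from the blocked-space counts $0,0,1$.

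The missing idea, which is what the paper does, is to stop attributing each car to the block of its preferred spot. Expand $t^{\ell}(1+t)^{n-\ell}=\sum_{L\supseteq \Lucky_{m,1}(w)}t^{|L|}$; your $(t+1)$-st colour marks a car forced to be unlucky. It then suffices to show, for every $L\subseteq[n]$, that the number of $\sigma\in\Pf_n(\yy)$ with $\Lucky_{m,1}(\sigma)\subseteq L$ equals $\sum_{\pi\in\Pf_n}\prod_{j\in L}y_{\pi_j}\prod_{j\notin L}o_{j,1}(\pi)$. Here $o_{j,1}(\pi)$ is the number of earlier occurrences of $\pi_j$ in $\pi$, plus one if $\pi_j=1$.

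Weighting by $t^{|L|}$ and summing gives $\sum_{\pi\in\Pf_n}\prod_j\bigl(ty_{\pi_j}+o_{j,1}(\pi)\bigr)$, which is exactly your sum of multinomials times rising factorials. The per-car factor is $ty_{\pi_j}+o_{j,1}(\pi)$ rather than $(y-m)t+m(1+t)$, so you never need to know how full a block is.

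The count is realized by building $\sigma$ car by car from a classical parking function $\pi$ of \emph{virtual labels}. A car $j\in L$ prefers any of the $y_{\pi_j}$ spots of $Y_{\pi_j}$. A car $j\notin L$ prefers one of the spots already occupied by earlier cars with label $\pi_j$ (or the blocked spot, if $\pi_j=1$); these may lie in later blocks. This is how spill-over is absorbed, and why the block content of $\sigma$ can differ from the content of $\pi$.

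Three facts make this a bijection onto $\{\sigma\in\Pf_n(\yy):\Lucky_{m,1}(\sigma)\subseteq L\}$. First, the labels can be reconstructed from $(\sigma,L)$. Second, relabelling moves a unit of the composition only from a smaller to a larger block index. Since every $y_i\ge 1$, each intermediate block contains a spot, necessarily occupied, that the spilling car passed, and this preserves the dominance inequalities defining $\mathcal{I}_n$. Third, with space $1$ blocked, at most the last car of a $\yy$-parking function fails to park, so car $j\notin L$ really has exactly $o_{j,1}(\pi)$ choices.
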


The case ${\bf y}={\bf 1}$ recovers a result by Gessel and Seo \cite[Thm. 10.1]{GesselSeo}. For an explanation of the terms ``lucky cars'' and ``first available space'' we refer the reader to Section~\ref{sec:magic}; see also \cite{StanleyYin,harris2024parkingfunctionsfixedset, kenyonparkingfunctions, ferreri2025enumeratingvectorparkingfunctions} where the authors use the term ``lucky cars'' in the context of parking functions and their generalizations, including $\yy$-parking functions. In the case that ${\yy} \in \mathbb{Z}^n_{>0}$, this appears to be related to a different lucky statistic on $\yy$-parking functions independently studied by Ferreri--Harris--Martinez--Swartz in \cite{ferreri2025enumeratingvectorparkingfunctions} (see Section~\ref{sec: other parking protocols y-parking} and Conjecture~\ref{conj: equidistribution both protocols}).

The class of magic positive Ehrhart polynomials is attracting considerable attention nowadays (see for instance \cite{ferroni-higashitani,branden-ferroni-jochemko,konoike,konoike2,DHS,liu-xiao}, being the case of Pitman--Stanley polytopes one of the most anticipated ones. Furthermore, by relying on some ideas used in the proof of Theorem~\ref{thm:main-intro} (which appeared in condensed form in our extended abstract \cite{avila-ferroni-morales-abstract}), Athanasiadis, Xiao, and Yan \cite{athanasiadis-xiao-yan} proved very recently that a certain class of polytopes called \emph{arbor polytopes} also have magic positive Ehrhart polynomials---this class of polytopes depends on two integer parameters and is not directly related to Pitman--Stanley polytopes.

One of the main motivations for proving the above result comes from a different encoding of Ehrhart polynomials. For any lattice polytope $\mathcal{P}$ of dimension $d$, one can find a polynomial $h^*_{\mathcal{P}}(x)$ of degree at most $d$ and satisfying:
    \begin{equation} \label{eq:hstar}
    \sum_{k\geq 0} \ehr_{\mathcal{P}}(k)\, z^k = \frac{h^*_{\mathcal{P}}(z)}{(1-z)^{d+1}}.
    \end{equation}
The polynomial appearing on the numerator of the right-hand-side is often called the \emph{Ehrhart $h^*$-polynomial}. It is a famous result by Stanley \cite{stanley-hstar} that the coefficients of $h^*_{\mathcal{P}}(z)$ are always nonnegative integers. Similar to how Ehrhart positivity is regarded as one of the most interesting classes of inequalities for the coefficients of $\ehr_{\mathcal{P}}(t)$, there is a hierarchy of attractive features that $h^*_{\mathcal{P}}(z)$ often displays: unimodality, log-concavity without internal zeros, or real-rootedness. Each of these three properties is stronger than the preceding ones (see \cite{stanley-unimodality}). We suggest \cite{braun-unimodality} and \cite{ferroni-higashitani} for a detailed discussion of these classes of inequalities in the specific context of Ehrhart $h^*$-polynomials. 

One of the many reasons why magic positivity is regarded as a very strong feature for an Ehrhart polynomial is that, via a remarkable result of Petter Br\"and\'en \cite{branden}, it implies that the $h^*$-polynomial of $\mathcal{P}$ is real-rooted (see \cite[Theorem~4.19]{ferroni-higashitani} for additional information). In other words, as a consequence of Theorem~\ref{thm:main-intro}, we obtain the following result.

\begin{theorem}\label{thm:main2-intro}
    Let $\Pi_n(\mathbf{y})$ be a Pitman--Stanley polytope. The polynomial $h^*_{\Pi_n(\mathbf{y})}(z)$ is real-rooted. In particular, its coefficients form a log-concave sequence without internal zeros and are unimodal.
\end{theorem}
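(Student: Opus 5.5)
The plan is to deduce Theorem~\ref{thm:main2-intro} from Theorem~\ref{thm:main-intro} using Brändén's result that magic positivity implies real-rootedness of the $h^*$-polynomial \cite{branden}, \cite[Theorem~4.19]{ferroni-higashitani}. The only real work is handling the degenerate vectors $\yy\in\mathbb{Z}^n_{\geq 0}$ that have zero entries. Theorem~\ref{thm:main-intro} only covers $\yy\in\mathbb{Z}^n_{>0}$.

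First we reduce to positive vectors.
\begin{itemize}
\item If $y_1=\cdots=y_k=0$, then the inequalities force $x_1=\cdots=x_k=0$. The polytope is then a lattice-equivalent copy of $\Pi_{n-k}(y_{k+1},\ldots,y_n)$ with $y_{k+1}>0$, and both lattice-point counts and $h^*$ agree.
\item If an interior entry vanishes, say $y_j=0$ with $j>1$, we merge coordinates. Set $\mathbf{x}'=(x_1,\ldots,x_{j-2},x_{j-1}+x_j,x_{j+1},\ldots)$. This is not a bijection, so merging is not simply an isomorphism. Instead I would use the Pitman--Stanley summation formula of Corollary~\ref{coro:ehr-PS}, or a direct combinatorial check, to see whether the Ehrhart polynomial stays magic positive.
\end{itemize}

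A cleaner route is to prove that magic positivity survives on the boundary of the parameter set. Fix the dimension $d=\dim\Pi_n(\yy)$. For $\yy$ with zero entries, approximate along the family $\yy+\varepsilon\mathbf{e}$. This fails for integer parameters, but the Ehrhart polynomial $\ehr_{\Pi_n(\yy)}(t)$ is a polynomial in $(t,\yy)$ jointly, since the Pitman--Stanley formula is polynomial in the $y_i$.

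The magic coefficients $c_i(\yy)$ are therefore polynomials in $\yy$. They are nonnegative on $\mathbb{Z}^n_{>0}$ by Theorem~\ref{thm:main-intro}. I would then use the dimension-preserving reduction above to keep the degree $d$ honest, because the magic basis depends on $d$.

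The main obstacle is this degree bookkeeping. When $y_1>0$ but some later $y_j=0$, the polytope is still $n$-dimensional. In that case I would argue directly that the combinatorial description extends: $n!\,c_i$ counts $\yy$-parking functions with $i$ lucky cars, and that definition still makes sense when some $y_j=0$. I expect the author's proof of Theorem~\ref{thm:main-intro} to go through verbatim for $\yy\in\mathbb{Z}^n_{\geq0}$ with $y_1>0$. Failing that, the $c_i(\yy)$ depend polynomially on each $y_j$ as a real variable, and nonnegativity holds at all positive integers. This alone does not give a limit, so I would instead check directly from the interpretation that zero entries only remove spots without breaking the counting argument.

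Once $\ehr_{\Pi_n(\yy)}(t)=\sum_i c_i t^i(1+t)^{d-i}$ with all $c_i\geq 0$ is known for every $\yy$, Brändén's theorem gives that $h^*_{\Pi_n(\yy)}(z)$ is real-rooted. Its coefficients are nonnegative by Stanley's theorem. A real-rooted polynomial with nonnegative coefficients has a log-concave coefficient sequence without internal zeros (Newton's inequalities), and such a sequence is unimodal \cite{stanley-unimodality}. This concludes the proof.
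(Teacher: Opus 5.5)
Your main line, for $\yy\in\mathbb{Z}^n_{>0}$, is exactly the paper's argument. The paper states Theorem~\ref{thm:main2-intro} as an immediate consequence of Theorem~\ref{thm:main-intro} via Br\"and\'en's theorem, so its scope is positive $\yy$. Your closing paragraph (Stanley nonnegativity, Newton's inequalities, no internal zeros, unimodality) is fine.

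The gap is your extension to vectors with a vanishing interior entry. You rely on magic positivity and the lucky-car count surviving for $\yy\in\mathbb{Z}^n_{\geq 0}$ with $y_1>0$, and this is false. For $\yy=(1,0,2)$ (Example~\ref{ex:PS 0 ys that is not magic positive}) one has $\mathscr{M}(\ehr_{\Pi_3(\yy)})(t)=-\frac{2}{3}t^2+\frac{5}{6}t+1$, so $3!\,c_2=-4$. This happens even though $\Pi_3(\yy)$ is full-dimensional, so degree bookkeeping is not the issue. The proof of Theorem~\ref{thm:main-intro} does not go through verbatim:
\begin{itemize}
\item Lemma~\ref{Lemma. preserve dominance} uses that every block $Y_k$ is nonempty. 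If a car skips an empty block, no occupied space witnesses it, and a partial sum can drop below $k$.
\item Example~\ref{ex:failure algorithm zero yi} shows the algorithm then outputting words that are not $\yy$-parking functions.
\end{itemize}
Your polynomial-in-$\yy$ fallback fails for the same reason. Nonnegativity of the $c_i(\yy)$ at positive integers does not propagate to $y_j=0$, and here it does not hold.

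So you must either restrict the statement to $\yy\in\mathbb{Z}^n_{>0}$, or find an argument for interior zeros that bypasses magic positivity. Your leading-zero reduction is correct, but it only helps when the truncated vector has no further zeros. Br\"and\'en's implication runs one way only. In the same example, $\ehr_{\Pi_3(1,0,2)}(k)=1,10,34,80$ for $k=0,1,2,3$, which gives $h^*_{\Pi_3(1,0,2)}(z)=1+6z$. This is real-rooted even though the magic coefficients are not all nonnegative. Any proof of the $h^*$ statement for such $\yy$ therefore needs a different tool, and neither your proposal nor the paper's deduction supplies one.
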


Furthermore, motivated by famous conjectures on Ehrhart positivity of generalized permutohedra (a property disproved in \cite{ferroni} in general), we address the problem of magic positivity for a special class of generalized permutohedra called $\yy$-generalized permutohedra. Precisely, given a vector $\yy\in \mathbb{Z}^m_{\geq 0}$, and a bipartite graph $H\subseteq K_{m,n}$ with no isolated vertices, we consider the polytope
    \[ \mathcal{P}_H(y_1,\ldots,y_m) := y_1\Delta_{I_1} + \cdots + y_m\Delta_{I_m}, \]
where $\Delta_S$ denotes the convex hull of the canonical vectors $\{e_i: i\in S\}$, and where $I_i$ stands for the indices of all the neighbors of vertex $1\leq i\leq m$ in $H$. The polytopes $\mathcal{P}_{H}(\yy)$ arising in the form described above are called \emph{$\yy$-generalized permutohedra} and were introduced by Postnikov \cite{postnikov}, where he proved that they are Ehrhart positive (see Theorem~\ref{thm:post ehrhart Y-perm} below). It is not true, however, that these Ehrhart polynomials are always magic positive (cf. Example~\ref{ex:y perm not magic positive} appearing below). As it turns out, we prove that if the parameters $y_i$ of the vector $\yy$ are ``sufficiently large'' then $\ehr_{\mathcal{P}_H(\yy)}(t)$ is indeed magic positive. Precisely, we show the following.

\begin{theorem}\label{thm:main3-intro}
    Let $\yy = (y_1, \dots, y_m)$ be a vector of non-negative integers such that $y_i \geq n - 1$ for all $i$ and $y_1 \geq n$.
    Let $\yPerm_H({\bf y})$ be a $\mathbf{y}$-generalized permutohedron associated to a bipartite graph $H\subset K_{m,n}$.
    Then the Ehrhart polynomial $\ehr_{\yPerm_H({\bf y})}(t)$ is magic positive.
    Moreover, writing
    \[\mathfrak{D}_{\yy}(H) := \bigsqcup_{{\bf a}\in\Draco(H)}\ExtPerm{\bf a}{\yy},\]
    the coefficient of $t^j(1+t)^{n-1-j}$ equals, up to the normalization factor $(n-1)!$, the number of words in $\mathfrak{D}_{\yy}(H)$ with exactly $j$ lucky cars under the block parking protocol without the first available space.
\end{theorem}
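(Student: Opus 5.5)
We would start from Postnikov's formula for the Ehrhart polynomial of $\yPerm_H(\yy)$ (Theorem~\ref{thm:post ehrhart Y-perm}), which writes it as a sum over draconian sequences ${\bf a}\in\Draco(H)$ of products of binomials:
\[
\ehr_{\yPerm_H(\yy)}(t)=\sum_{{\bf a}\in\Draco(H)}\binom{t y_1+a_1}{a_1}\prod_{i\ge 2}\binom{t y_i+a_i-1}{a_i},
\]
with $a_1+\cdots+a_m=n-1$. Magic positivity is preserved under sums, and the product of magic-positive polynomials in the bases $\{t^j(1+t)^{d_1-j}\}$ and $\{t^j(1+t)^{d_2-j}\}$ is magic positive in degree $d_1+d_2$. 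So it suffices to prove two local statements. First, $\binom{ty+a}{a}$ is magic positive of degree $a$ whenever $y\ge a$. Second, $\binom{ty+a-1}{a}$ is magic positive of degree $a$ whenever $y\ge a-1$. Since every $a_i\le n-1$, the hypotheses $y_i\ge n-1$ and $y_1\ge n$ cover all factors; the extra unit on $y_1$ is only a safety margin for the first factor.

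For the local statements we reuse the single-block analysis from the proof of Theorem~\ref{thm:main-intro}. There, $a!\binom{ty+a}{a}$ and $a!\binom{ty+a-1}{a}$ were expanded in the magic basis by counting words of length $a$ on a block of $y$ spots, graded by the number of lucky cars without the first available space. Concretely, one writes
\[
a!\binom{ty+a-1}{a}=\prod_{k=0}^{a-1}(ty+k)
\]
and rewrites each linear factor as
\[
ty+k=(y-k)\,t+k\,(1+t).
\]
Its coefficients in the degree-one magic basis are $y-k$ and $k$, both nonnegative exactly when $y\ge k$. Expanding the product then gives magic positivity of $a!\binom{ty+a-1}{a}$ whenever $y\ge a-1$.

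The same expansion reads combinatorially. Car $k+1$ either lands on one of the $y-k$ ``fresh'' spots, making it lucky and contributing a factor $t$, or reuses one of the $k$ previously determined choices, making it unlucky and contributing a factor $1+t$. The first factor $\binom{ty+a}{a}$ is handled in the same way using the factors $ty+k$ for $k=1,\dots,a$, which requires $y\ge a$. This distinction is why $y_1\ge n$ is imposed separately from $y_i\ge n-1$.

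Multiplying the per-block expansions and summing over ${\bf a}$ gives $(n-1)!\,c_j$ as a count of tuples. Each tuple consists of ${\bf a}\in\Draco(H)$, a choice of which of the $n-1$ cars go to each block (the multinomial $\binom{n-1}{{\bf a}}$ turning $\prod a_i!$ into $(n-1)!$), and a within-block lucky/unlucky choice, weighted by the total number $j$ of lucky cars.

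Finally, we match these tuples with the words of $\ExtPerm{\bf a}{\yy}$ and the block parking protocol without the first available space, as defined in the paper, checking that the lucky count is additive over blocks. We expect this bijective matching to be the main obstacle: the protocol must be shown to decouple block by block, so that each car's luck depends only on earlier cars in its own block, exactly as in the Pitman--Stanley case. We would first verify it directly from the protocol's definition by inducting on the order in which cars arrive.
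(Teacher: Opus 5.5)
Your proposal is correct and is essentially the paper's argument. The paper also starts from Postnikov's formula and uses Corollary~\ref{coro:magic-binomials} to turn each summand into the product of linear factors $k+(y_i-k)t$ (your $ty+k=(y-k)t+k(1+t)$). It then identifies that product with the lucky-car generating function over $\ExtPerm{\aaa}{\yy}$ via Proposition~\ref{prop. ecuaciones de blucky by pi}, Corollary~\ref{cor. full-capacity} and Theorem~\ref{thm. coefficients-count}, where the algebraic contributions $\alg_1^{(\beta)}(\yy;\pi,L)$ are exactly your car-by-car fresh/occupied choices indexed by the set $L$ of lucky cars.

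Two small corrections. First, this block analysis is carried out in Section~\ref{sec. magic extenden} as the first step toward Theorem~\ref{thm:main-intro}, not inside its proof. Second, the decoupling you expect to be the main obstacle is immediate: in the block protocol a car only ever circulates inside its own block $Y_{\pi_j}$, and under the stated hypotheses it always finds a free spot there.
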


We refer to the main body of the article for the undefined terms appearing in the above statement.
The most interesting part about this is not the magic positivity itself (because, in fact, it can also be deduced via elementary considerations on polynomials, see Proposition~\ref{prop:magic-y-perm-large}). The substance in the above statement is the fact that, also in this case, one has a combinatorial interpretation for the coefficients of the Ehrhart polynomial in the magic basis. As it turns out, in the main body of the article we need to prove this statement \emph{first} in order to come up with the right combinatorial interpretation that enables the proof of Theorem~\ref{thm:main-intro}.

\subsection*{Outline}
This article is organized as follows. Section~\ref{sec: background} has the background and notation used throughout. Section~\ref{sec. magic extenden} gives the general strategy for the proof of Theorem~\ref{thm:main-intro} and gives the proof of Theorem~\ref{thm:main3-intro} for ${\bf y}$-generalized permutohedra. Section~\ref{sec:magic} deals with the magic positivity and lucky statistic of ${\bf y}$-parking functions and finalizes the proof of Theorem~\ref{thm:main-intro}. Lastly, final remarks, examples of instances where magic positivity does not hold for related polytopes, and some open questions are given in Section~\ref{sec: final remarks}.

\subsection*{Related work}

An extended abstract \cite{avila-ferroni-morales-abstract} has been accepted for publication at the Proceedings of the FPSAC 2026 conference. Furthermore, in a companion paper \cite{avila-ferroni-morales-hstar} we provide a combinatorial interpretation for the $h^*$-polynomial of an arbitrary Pitman--Stanley polytope.

\subsection*{Acknowledgments}

The last two named authors would like to thank the American Institute of Mathematics (AIM) and the organizers of the AIM workshop {\em Ehrhart polynomials: inequalities and extremal constructions} in May 2022, where this project was started. We also thank  Christos Athanasiadis, William Dugan, Akihiro Higashitani, Annie Raymond, Richard Stanley, and Mei Yin for helpful comments and encouragement with this project.

\section{Background and notation} \label{sec: background}

We denote by $\bbinom{n}{k}$ the \textbf{multiset binomial coefficient}, counting the number of multisets of size $k$ that can be formed from a set of $n$ distinct elements and can be calculated by $$\bbinom{n}{k} = \binom{n + k - 1}{k}.$$

\subsection{\texorpdfstring{$\yy$}--Extended permutations}

In order to establish the main results of this article, we need to introduce some technical enumerative objects that we call ``extended permutations''.

\begin{definition}[$\yy$-extended permutations]

Let $\ss=(s_1,s_2,\dots,s_n)$ be a vector of nonnegative integers. We associate to $\ss$ the multiset $M_{\ss} := \{1^{s_1},2^{s_2},\dots,n^{s_n}\}$; that is, the multiset containing exactly $s_i$ copies of $i$, for each $i=1,\dots,n$. We denote by $\mulPerm{\ss}$ the set of all multipermutations of the multiset $M_{\ss}$; that is, all words of length $\sum_{i=1}^{n} s_i$ formed using the elements of $M_{\ss}$ respecting multiplicities.

Let $\yy=(y_1,\dots,y_n)$ be a vector of non-negative integers. Define a collection of sets $Y_1,\ldots,Y_n$ as follows: $Y_i = \left\{\left(\sum_{j=1}^{i-1} y_j\right) +1,\cdots,\sum_{j=1}^i y_j\right\}$. Let $\pi \in \mulPerm{\ss}$ be a multiset permutation, and let $r = \sum_{i=1}^n s_i$. We define a \textit{$\mathbf{y}$-extended permutation} associated to $\pi$ as any word $\sigma = (\sigma_1,\sigma_2,\dots,\sigma_r)$ where $\sigma_{j} \in Y_{\pi_{j}}$ for each $j\in [r]$. We define $\ExtPerm{\ss}{\yy}$ as the set of all \textit{$\mathbf{y}$-extended permutations} associated with the multiset permutations in $\mulPerm{\ss}$.
\end{definition}

\begin{example}\label{ej: y-permutation}
Let $\ss = (1, 2, 1)$. The multiset associated with $\ss$ is $M_{\ss} = \{1, 2, 2, 3\}$, and one possible permutation $\pi \in \mulPerm{\ss}$ is $\pi = (2, 3, 1, 2)$. Let us fix the vector $\mathbf{y}= (2,3,1)$ and let us construct all $\mathbf{y}$-extended permutations associated to $\pi$.

According to the definition we first identify the interval sets $Y_1,Y_2,Y_3$ induced by $\mathbf{y}$:
\[Y_1 = \{1,2\}, \quad Y_2 = \{3,4,5\}, \quad Y_3 = \{6\}.\]
Using the notation in the above definition, we have that $r = 1 + 2 + 1 = 4$. A $\mathbf{y}$-extended permutation associated to $\pi$ is thus a vector with four coordinates lying in $Y_2\times Y_3\times Y_1\times Y_2$. A direct computation shows that the number of $\mathbf{y}$-extended permutations is thus $|Y_2|\cdot |Y_3|\cdot |Y_1| \cdot |Y_2| = 3\cdot 1 \cdot 2 \cdot 3 = 18$. 
The following is an exhaustive list of all the eighteen $\mathbf{y}$-extended permutations associated to $\pi$:
\begin{center}
    \begin{tabular}{cccccc}
    $(3, 6, 1, 3)$ & $(3, 6, 1, 4)$ & $(3, 6, 1, 5)$ & $(3, 6, 2, 3)$ & $(3, 6, 2, 4)$ & $(3, 6, 2, 5)$ \\
    $(4, 6, 1, 3)$ & $(4, 6, 1, 4)$ & $(4, 6, 1, 5)$ & $(4, 6, 2, 3)$ & $(4, 6, 2, 4)$ & $(4, 6, 2, 5)$ \\
    $(5, 6, 1, 3)$ & $(5, 6, 1, 4)$ & $(5, 6, 1, 5)$ & $(5, 6, 2, 3)$ & $(5, 6, 2, 4)$ & $(5, 6, 2, 5)$ \\
    \end{tabular}
\end{center}
\end{example}

\begin{example}
Consider the one-coordinate vectors $\ss = (r)$ and $\yy = (m)$, with $r,m > 0$. In this case, we have $M_{\ss} = \{1^r\}$, and thus there is a single permutation $(1, 1, \dots, 1)$ of this multiset. Since each $1$ can be expanded in the interval $Y_1 = [m] = \{1, 2, \dots, m\}$, the associated $\yy$-extended permutations correspond to all words of length $r$ over the alphabet $[m]$. That is:
\[\ExtPerm{(r)}{(m)} = [m]^r.\]
\end{example}

\begin{example}
Consider $\yy = (1,\dots,1)\in \mathbb{Z}_{\geq 0}^n$. The intervals $Y_1,\ldots,Y_n$ consist only of one element $Y_{i} = \{i\}$. Thus, for any $\mathbf{s}=(s_1,\ldots,s_n)$ we have an identification between the sets $\ExtPerm{\ss}{\yy}$ and  $\mulPerm{\ss}$.
Furthermore, for $\ss = (1,\dots,1)\in \mathbb{Z}^n$, the associated multiset is $M_{\ss} = \{1,2,\dots,n\}$, and since each symbol appears exactly once, the set $\mulPerm{\ss}$ may be identified with the symmetric group $\mathfrak{S}_n$.
\end{example}

\begin{remark}
Let $r = \sum_{i=1}^n s_i$ be the size of permutations, then the total number of such permutations is given by the multinomial formula (see \cite[Eq. 1.22]{stanley-ec1}):
\begin{equation}\label{Eq. total_multpermutation}
    |\mulPerm{\ss}| = \binom{r}{s_1, s_2, \dots, s_n} = \frac{r!}{s_1! s_2! \cdots s_n!}
\end{equation}

More generally, for arbitrary $\yy = (y_1, \dots, y_n)$, in $\mulPerm{\ss}$ each symbol $i$ appears $s_i$ times, and since each symbol $i$ can be expanded in $y_i$ distinct ways, we obtain $y_i^{s_i}$ total expansions. Therefore, the total number of $\yy$-extended permutations in $\ExtPerm{\ss}{\yy}$ is:
\begin{equation}\label{Eq. total_y_permutation}
    |\ExtPerm{\ss}{\yy}| = \binom{r}{s_1, s_2, \dots, s_n} \cdot \prod_{i=1}^{n} y_i^{s_i} = r! \cdot \prod_{i=1}^{n} \frac{y_i^{s_i}}{s_i!}
\end{equation}
We adopt the convention that $0^0 = 1$, which reflects that when $s_i = 0$, the symbol $i$ does not appear in any permutation and its corresponding interval $Y_i$ is ignored.
\end{remark}

\subsection{y-parking functions}\label{subsec:y-pf}

The following generalization of the classical parking functions was defined in {\cite[Sec. 5]{Pitman_Stanley_1999}}.

\begin{definition}[$\yy$-parking functions]\label{def:y-parking-functions}
Given ${\bf y}=(y_1,\ldots,y_n)\in \mathbb{N}^n$, a ${\bf y}$-parking function is a sequence $\sigma=(s_1,\ldots,s_n)$ of positive integers such that their increasing rearrangement $b_1\leq \cdots \leq b_n$ satisfies $b_i\leq y_1+\cdots + y_i$. We denote this set by $\Pf_n({\bf y})$. The classical parking functions are the case ${\bf y}={\bf 1}$, that we denote by $\Pf_n:=\Pf_n({\bf 1})$.
\end{definition}

\begin{remark}
    These parking functions also appear in the literature as {vector parking functions} or as ${\bf u}$-parking functions \cite{KungYan,Yan_pf_handbook} where ${\bf u}=(u_1,\ldots,u_n)$ with $u_i=y_1+\cdots+y_i$.
\end{remark}

The $\yy$-parking functions  can be obtained from  classical parking functions  as follows in a process that we call {\em extension}. Given a parking function $\pi=(\pi_1,\ldots,\pi_n)$ in $\Pf_n({\bf 1})$, if we replace each $i$ in $\pi$ by an integer in $\{y_1+\cdots + y_{i-1}+1,\ldots, y_1+\cdots + y_i\}$, we obtain a $\yy$-parking function. Conversely, every $\yy$-parking function can be uniquely obtained this way. This gives the following identity for the number of $\yy$-parking functions in terms of the classical parking functions.

\begin{proposition}[{\cite[Thm. 11]{Pitman_Stanley_1999}}] \label{prop:decompression}
For non-negative integers $\yy=(y_1,\ldots,y_n)$ we have that 
\[
\# \Pf_n(\yy)\,=\, \sum_{\pi \in \Pf_n({\bf 1})} y_{\pi_1}\cdots y_{\pi_n}.
\]
\end{proposition}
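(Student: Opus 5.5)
The plan is to prove Proposition~\ref{prop:decompression} by making the ``extension'' map described just before the statement into an explicit bijection and then counting its fibres. Write $u_i := y_1+\cdots+y_i$ (with $u_0=0$), and let $Y_i=\{u_{i-1}+1,\dots,u_i\}$ as in the definition of $\yy$-extended permutations. Every positive integer $a\le u_n$ lies in exactly one block $Y_i$; call this index $\beta(a)$. Empty blocks (those with $y_i=0$) never contain anything, which matches the factor $y_i=0$ in the sum.

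\textbf{Step 1: the compression map.} Send a $\yy$-parking function $\sigma=(\sigma_1,\dots,\sigma_n)$ to $\beta(\sigma)=(\beta(\sigma_1),\dots,\beta(\sigma_n))$. This requires $\sigma_j\le u_n$, which holds since the largest sorted entry satisfies $b_n\le u_n$. Since $\beta$ is weakly increasing, it commutes with sorting: the increasing rearrangement of $\beta(\sigma)$ is $\beta(b_1)\le\dots\le\beta(b_n)$. The condition $b_i\le u_i$ gives $\beta(b_i)\le i$, because $b_i\le u_i$ means $b_i$ lies in a block $Y_k$ with $k\le i$. Hence $\beta(\sigma)\in\Pf_n(\mathbf 1)$.

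\textbf{Step 2: the extension map inverts it.} Conversely, take $\pi\in\Pf_n(\mathbf 1)$ and choose any $\sigma_j\in Y_{\pi_j}$. Sorting $\pi$ as $c_1\le\dots\le c_n$ with $c_i\le i$, the monotonicity of $\beta$ lets us sort $\sigma$ compatibly, so that $b_i\in Y_{c_i}$. Then $b_i\le u_{c_i}\le u_i$. Ties inside a single block need a little care here: if several $\sigma_j$ share the same block, any increasing order of them still keeps each in $Y_{c_i}$ for the corresponding $i$, since the block labels are sorted first. So $\sigma$ is a $\yy$-parking function with $\beta(\sigma)=\pi$.

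\textbf{Step 3: counting fibres.} The fibre of $\beta$ over $\pi$ is exactly $Y_{\pi_1}\times\cdots\times Y_{\pi_n}$, which has cardinality $y_{\pi_1}\cdots y_{\pi_n}$. Summing over all $\pi\in\Pf_n(\mathbf 1)$ gives the formula.

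The only delicate point is the interaction between sorting and the blockwise replacement in Steps 1 and 2. It is resolved entirely by the fact that $\beta$ is weakly monotone. The case of zero entries in $\yy$ needs no separate treatment: when $y_i=0$ the block $Y_i$ is empty, so no extension exists for words containing $i$, and the product is $0$, consistent with the count.
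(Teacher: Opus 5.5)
Your proof is correct and is essentially the paper's argument. The paper justifies the formula by the same ``extension'' bijection: replace each $i$ in a classical parking function by an element of $Y_i$, and conversely every $\yy$-parking function arises uniquely this way. Your monotone block-index map $\beta$ simply makes both directions and the fibre count $y_{\pi_1}\cdots y_{\pi_n}$ explicit.
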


\begin{example} \label{ex: y-parking}
For $\yy=(2,1)$, the $\yy$-parking functions are $$\{(1,1),(1,2),(2,1),(2,2)\} \cup \textcolor{blue}{\{(1,3), (2,3)\}} \cup \textcolor{red}{\{(3,1), (3,2)\}},$$ which are obtained by extending the classical parking functions $(1,1),\textcolor{blue}{(1,2)},\textcolor{red}{(2,1)}$. Moreover, $\#\Pf_n(\yy)=y_1^2+\textcolor{blue}{y_1y_2} + \textcolor{red}{y_2y_1}$. 
\end{example}

\subsection{Pitman--Stanley polytopes and generalized permutohedra}

Postnikov defined in \cite{postnikov} the following polytopes called $\mathbf{y}$-generalized permutohedra. These polytopes form a strict but very large subclass of generalized permutohedra. For $I\subset [n]$, let $\Delta_I=\operatorname{conv}(e_i \mid i\in I)$ denote the coordinate simplex indexed by $I$. Given a bipartite graph $H\subset K_{m,n}$ with no isolated vertices and ${\bf y}=(y_1,\ldots,y_m)\in \mathbb{Z}^m_{\geq 0}$, let $\yPerm_H({\bf y})$ be the following Minkowski sum
\[
\yPerm_H(y_1,\ldots,y_m) = y_1\Delta_{I_1} + \cdots + y_m \Delta_{I_m},
\]
where $I_i=\{j \mid (i,j) \in E(H)\}$   for $i=1,\ldots,m$.

An {\em $H$-Draconian sequence} is a tuple $\aaa=(a_1,\ldots,a_m)$ of non-negative integers $(a_1,\ldots,a_m)$ such that $\sum_i a_i = n-1$ and for any subset $\{j_1,\ldots,j_k\}\subset [m]$ we have that $|I_{j_1} \cup \cdots \cup I_{j_k}|\geq a_{j_1}+\cdots + a_{j_k}+1$. Let $\Draco(H)$ be the set of $H$-Draconian sequences. 

Postnikov gave in \cite{postnikov} the following formula for the Ehrhart polynomial of $\yPerm_H({\bf y})$.

\begin{theorem}[{\cite[Thm. 11.3]{postnikov}}] \label{thm:post ehrhart Y-perm}
\[
\ehr_{\yPerm_H({\bf y})}(t) = \sum_{{\bf a}\in\Draco(H)} \bbinom{t \cdot y_1+1}{a_1} \ \bbinom{t\cdot y_2}{a_2}\cdots \bbinom{t \cdot y_m}{a_m}.
\]    
\end{theorem}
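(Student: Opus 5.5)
The identity is a statement about lattice points of Minkowski sums of dilated simplices. Since $t\,\yPerm_H(\yy)=\yPerm_H(t\yy)$, it suffices to prove that for every $\yy\in\mathbb{Z}^m_{\geq 0}$
\[
\#\bigl(\yPerm_H(\yy)\cap\mathbb{Z}^n\bigr)=\sum_{\aaa\in\Draco(H)}\bbinom{y_1+1}{a_1}\bbinom{y_2}{a_2}\cdots\bbinom{y_m}{a_m},
\]
and then substitute $y_i\mapsto t\,y_i$. By McMullen's theory of mixed Ehrhart polynomials, the left-hand side is a polynomial in $(y_1,\ldots,y_m)$ on $\mathbb{Z}^m_{\geq 0}$. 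The right-hand side is visibly a polynomial too. So it is enough to verify the identity for $y_i$ in a suitably large range, or for all positive $y_i$, and treat $y_i=0$ by polynomiality.

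\textbf{Subdivision into product cells.} I would use Postnikov's fine mixed subdivisions of $\yPerm_H(\yy)$. The fine mixed cells are of the form $y_1\Delta_{J_1}+\cdots+y_m\Delta_{J_m}$, where $J_i\subseteq I_i$ and the bipartite graph $T$ with edges $\{(i,j):j\in J_i\}$ is a spanning tree of $H$. Such a cell is affinely isomorphic to the product $\prod_i y_i\Delta_{J_i}$. Its left degree vector $(|J_1|-1,\ldots,|J_m|-1)$ sums to $n-1$.

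The key combinatorial input is Postnikov's result that, in any fine mixed subdivision, the left degree vectors of the cells are pairwise distinct and are exactly the $H$-Draconian sequences. Necessity of the Hall-type inequalities is easy: a subforest of a tree has fewer edges than vertices. The substantive part is that every Draconian sequence occurs exactly once. I would prove this by induction on $m$, deleting the last left vertex and using the cells on the corresponding facet, or else cite it directly.

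\textbf{Half-open decomposition.} To count lattice points without overcounting shared boundaries, I would fix a generic direction and pass to the half-open decomposition: $\yPerm_H(\yy)$ is the disjoint union of its cells, each with the facets visible from the chosen direction removed. For a cell $\prod_i y_i\Delta_{J_i}$, I would choose the direction so that the $i$-th factor, for $i\geq 2$, loses exactly one facet, say the one opposite a distinguished vertex. The lattice points of a dilated simplex $y\Delta_{a}$ of dimension $a$ with one facet removed number $\binom{y+a-1}{a}=\bbinom{y}{a}$. The first factor must remain closed to account for the global boundary, and $y\Delta_a$ has $\binom{y+a}{a}=\bbinom{y+1}{a}$ lattice points. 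This produces exactly the summand indexed by $\aaa$, and summing over cells gives the theorem.

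\textbf{Main obstacle.} The hard step is the compatibility of the half-open decomposition with the product structure: one must show that a single generic direction makes each factor $i\geq 2$ lose precisely one facet, and factor $1$ none, uniformly over all cells. I would first try the direction $-(\varepsilon, \varepsilon^2,\ldots,\varepsilon^n)$ combined with a specific regular mixed subdivision coming from a lexicographic lifting that orders the left vertices $1,\ldots,m$. Under this lifting, the removed facets correspond to the tree edges towards the smallest neighbour, and I would check that this matches the Draconian bijection inductively. If that fails, the fallback is to prove the count by induction on $m$ via the Minkowski-sum recurrence $\yPerm_H(\yy)=\yPerm_{H'}(y_1,\ldots,y_{m-1})+y_m\Delta_{I_m}$. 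One would then verify that the Draconian sum satisfies the same recursion, with Vandermonde-type identities for $\bbinom{\cdot}{\cdot}$ handling the bookkeeping.
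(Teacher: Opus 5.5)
There is a genuine gap, and it is not merely technical. For a general $H$ the identity you set out to prove is false, so no half-open decomposition can produce it.

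Evaluate both sides at $t=0$. The left side is $\ehr_{\yPerm_H(\yy)}(0)=1$. On the right, $\bbinom{1}{a_1}=1$ while $\bbinom{0}{a}=0$ for $a\geq 1$. So the right side equals $1$ if $(n-1,0,\dots,0)\in\Draco(H)$, i.e.\ if $I_1=[n]$, and $0$ otherwise. Concretely, take $n=3$, $m=2$, $I_1=\{1,2\}$, $I_2=\{2,3\}$. Then $\Draco(H)=\{(1,1)\}$ and the formula gives $(ty_1+1)\,ty_2$. But $t\yPerm_H(\yy)=ty_1\Delta_{\{1,2\}}+ty_2\Delta_{\{2,3\}}$ is a product of two segments with $(ty_1+1)(ty_2+1)$ lattice points.

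This pinpoints the failing step. Take a half-open decomposition of the \emph{closed} polytope with respect to a generic interior point $q$. Facets lying on $\partial\yPerm_H(\yy)$ are never deleted, and the cell containing $q$ keeps all of its facets. Your rule says factor $1$ stays closed and every factor $i\geq 2$ with $a_i\geq 1$ loses exactly one facet. So the cell containing $q$ must have left degree vector $(n-1,0,\dots,0)$, which is Draconian only when $I_1=[n]$.

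In the example, the subdivision is the single cell $\yPerm_H(\yy)$ itself, and your rule deletes an edge carrying $ty_1+1$ lattice points. Using a direction at infinity instead of an interior point does not decompose the closed polytope at all. The sentence ``the first factor must remain closed to account for the global boundary'' has no mechanism behind it. Your Minkowski-sum induction fallback aims at the same false identity, and lattice-point counts of Minkowski sums are not determined by those of the summands anyway. Your preliminary ingredients are fine: polynomiality in $\yy$, and fine mixed cells indexed by spanning trees whose left degree vectors run exactly once over $\Draco(H)$.

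The paper does not prove this statement; it quotes \cite[Thm.~11.3]{postnikov}. As transcribed, it omits the hypothesis $I_1=[n]$. That hypothesis does hold for Pitman--Stanley polytopes ($I_1=[n+1]$), the case used in Corollary~\ref{coro:ehr-PS}. Under it, the clean route is Postnikov's:
\begin{enumerate}[(1)]
\item The ``one facet removed from every factor'' count is correct for the \emph{trimmed} polytope $\yPerm^-_H(\yy)=\{x : x+\Delta_{[n]}\subseteq \yPerm_H(\yy)\}$. Its number of lattice points is $\sum_{\aaa\in\Draco(H)}\prod_{i=1}^m\bbinom{y_i}{a_i}$.
\item To prove this, replace each left vertex $i$ by $y_i$ copies, which reduces to $\yy=\mathbf{1}$. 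The Draconian sequences of the blown-up graph are exactly the ways of splitting each $a_i$ of an $H$-Draconian $\aaa$ into $y_i$ nonnegative parts, and there are $\bbinom{y_i}{a_i}$ such splittings.
\item For $\yy=\mathbf{1}$, the right degree vectors of the trees in a fine mixed subdivision, shifted by $-\mathbf{1}$, are distinct and are exactly the lattice points of $\yPerm^-_H(\mathbf{1})$. So there are $\#\Draco(H)$ of them.
\item Since $\Delta_{I_1}=\Delta_{[n]}$, Minkowski cancellation applied to $(ty_1+1)\Delta_{[n]}=\Delta_{[n]}+ty_1\Delta_{[n]}$ gives $t\yPerm_H(\yy)=\yPerm^-_H(ty_1+1,ty_2,\dots,ty_m)$. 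This is where the shifted factor $\bbinom{ty_1+1}{a_1}$ comes from.
\end{enumerate}
With $I_1=[n]$ your half-open idea is not hopeless: you would put $q$ in the cell of type $(n-1,0,\dots,0)$ and prove the one-facet-per-factor pattern for every other cell. But that is precisely the part you left open, and the trimmed route sidesteps it.
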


This formula makes clear that the polynomial $\ehr_{\yPerm_H({\bf y})}(t)$ has nonnegative coefficients, because each factor appearing in each summand has nonnegative coefficients. This phenomenon does not extend to all generalized permutohedra (see \cite{ferroni} for several examples).

Postnikov showed in \cite{postnikov} that Pitman--Stanley polytopes are an example of  a $\mathbf{y}$-generalized permutohedron.

\begin{proposition}[{\cite[Ex. 9.7]{postnikov}}]
Let ${\bf y}=(y_1,\ldots,y_n) \in \mathbb{Z}_{\geq 0}^n$, and $H\subset K_{n,n+1}$ be defined by the sets $I_i=[n+2-i]$ for $i=1,\ldots,n$, then $P_H({\bf y})$ is the Pitman--Stanley polytope $\Pi_n({\bf y})$.
\end{proposition}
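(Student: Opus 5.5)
The statement says that the Minkowski sum $\yPerm_H(\yy)=\sum_{i=1}^n y_i\Delta_{[n+2-i]}\subset\mathbb{R}^{n+1}$ is affinely (indeed lattice-) equivalent to $\Pi_n(\yy)\subset\mathbb{R}^n$. I would prove it with an explicit unimodular identification followed by a flow (transportation) argument.

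\textbf{The identification.} Every point $\mathbf{z}\in\yPerm_H(\yy)$ satisfies $z_1+\cdots+z_{n+1}=y_1+\cdots+y_n=:Y$, so the coordinate $z_1$ is determined by the others. Define $\phi(\mathbf{z})=\mathbf{x}$ by $x_k:=z_{n+2-k}$ for $k=1,\ldots,n$; this drops $z_1$ and reverses the order of the remaining coordinates. Restricted to the hyperplane $\sum_j z_j=Y$, $\phi$ is a lattice-preserving affine bijection onto $\mathbb{R}^n$, with inverse $z_1=Y-\sum_k x_k$. So it suffices to show that $\phi(\yPerm_H(\yy))=\Pi_n(\yy)$.

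\textbf{Inclusion $\subseteq$.} Write $\mathbf{z}=\sum_i y_i\mathbf{p}^{(i)}$ with $\mathbf{p}^{(i)}\in\Delta_{[n+2-i]}$. Nonnegativity of $\mathbf{x}$ is immediate. The coordinate $x_k=z_{n+2-k}$ receives mass from summand $i$ only if $n+2-k\le n+2-i$, that is, only if $i\le k$. Hence $x_1+\cdots+x_k$ is at most the total mass of the summands $i\le k$, namely $y_1+\cdots+y_k$. This is exactly the defining inequality of $\Pi_n(\yy)$.

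\textbf{Inclusion $\supseteq$.} This is the substantive step. Given $\mathbf{x}\in\Pi_n(\yy)$, I must split each supply $y_i$ into amounts $f_{i,k}\ge 0$ for $k\ge i$, plus a slack amount sent to $z_1$, so that $\sum_{i\le k}f_{i,k}=x_k$ for every $k$. Here $\mathbf{p}^{(i)}$ puts weight $f_{i,k}/y_i$ on $e_{n+2-k}$ and the remaining weight on $e_1$; if $y_i=0$, the summand is trivial and contributes nothing.

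I would construct the $f_{i,k}$ greedily, processing $k=1,2,\ldots,n$ in order. To fill demand $x_k$, draw from the leftover supplies of $y_1,\ldots,y_k$ in any order. The greedy step never gets stuck because, before step $k$, the total leftover supply among indices $\le k$ equals
\[
(y_1+\cdots+y_k)-(x_1+\cdots+x_{k-1}),
\]
which is at least $x_k$ precisely by the $k$-th prefix inequality of $\Pi_n(\yy)$. After step $n$, all remaining supply is sent to $z_1$, and it is nonnegative. Hence $\mathbf{z}=\phi^{-1}(\mathbf{x})$ lies in the Minkowski sum.

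Alternatively, one could invoke the max-flow/min-cut (Hall-type) criterion for this bipartite transportation problem; the cut conditions reduce to the prefix inequalities because the supply-to-demand edges are nested intervals.

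The main care point is the bookkeeping in the inclusion $\supseteq$, namely checking that the nested structure $I_i=[n+2-i]$ makes the prefix inequalities sufficient. The greedy argument handles this cleanly, and the rest is routine.
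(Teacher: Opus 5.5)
Your proof is correct. Note that the paper does not prove this proposition at all; it is quoted from Postnikov and used only to specialize his Ehrhart formula to $\Pi_n(\mathbf{y})$, so your argument supplies a self-contained proof where the paper has only a citation.

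Both inclusions check out. With $x_k=z_{n+2-k}$, the summand $y_i\Delta_{[n+2-i]}$ reaches $x_k$ only when $i\le k$, which gives the prefix inequalities. Conversely, your greedy filling never stalls, because the supply left among $y_1,\dots,y_k$ before step $k$ is $(y_1+\cdots+y_k)-(x_1+\cdots+x_{k-1})\ge x_k$. Putting the residue on $e_1$ is legitimate because $1\in I_i$ for every $i$, while $n+2-k\ge 2$ keeps it separate from the demand coordinates.

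A more standard route within Postnikov's framework uses the inequality description of a Minkowski sum of simplices with nonnegative coefficients:
$\sum_i y_i\Delta_{I_i}=\{\mathbf{z}:\sum_j z_j=\sum_i y_i,\ \sum_{j\in J}z_j\ge\sum_{I_i\subseteq J}y_i \text{ for all } J\}$.
For the nested family $I_i=[n+2-i]$, every such constraint follows from $z_j\ge 0$ together with the constraints for the initial segments $J=[n+1-k]$. Using $\sum_j z_j=\sum_i y_i$, these read $z_{n+2-k}+\cdots+z_{n+1}\le y_1+\cdots+y_k$, which are exactly the prefix inequalities in your coordinates.

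That route is quicker once the general description is available, but the general description is itself a transportation (Hall-type) theorem. Your greedy argument proves just the special case needed, with nestedness doing the work that max-flow/min-cut does in general. Since your $\phi$ is linear and lattice-preserving on each hyperplane $\sum_j z_j=tY$, the identification also holds for all dilates. That is what is needed to transfer Ehrhart polynomials.
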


Since $\Pi_n({\bf y})$ is  a $\yy$-generalized permutohedron, as a consequence one may specialize Proposition~\ref{thm:post ehrhart Y-perm} to obtain a nonnegative formula for the Ehrhart polynomial of Pitman-Stanley polytopes. Such specializations appear already in the work of Pitman and Stanley. In order to state these formulas without resorting to the language of draconian sequences, we introduce the set $\mathcal{I}_n$ of all weak compositions of $n$ that are $\geq {\bf 1}$ in dominance order. That is

\begin{equation}\label{Eq. parking over compositions}
    \mathcal{I}_n:=\left\{(s_1,\ldots,s_n)\in \mathbb{Z}_{\geq 0}^n \; : \;\sum_{j=1}^n s_j = n \;\text{ and }\; \sum_{j=1}^i s_j \geq i, \text{  for each }i=1,\ldots,n-1\right\}.
\end{equation}
Note that these compositions are among the many objects counted by the Catalan numbers $C_n=\frac{1}{n+1}\binom{2n}{n}$ \cite[2.87]{CatalanBook}.

\begin{corollary}[{Pitman--Stanley \cite[Eq. (33)]{Pitman_Stanley_1999}}]\label{coro:ehr-PS}
Let ${\bf y}=(y_1,\ldots,y_n) \in \mathbb{Z}_{\geq 0}^n$, then
\begin{align}
\ehr_{\Pi_n({\bf y})}(t) &= 
 \sum_{{\bf s} \in \mathcal{I}_n} \bbinom{t\cdot y_1+1}{s_1} \bbinom{t\cdot y_2}{s_2}\cdots \bbinom{t\cdot y_n}{s_n}. \label{eq: catalana formual lattice points}
\end{align}
\end{corollary}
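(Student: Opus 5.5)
Corollary~\ref{coro:ehr-PS} follows by specializing Theorem~\ref{thm:post ehrhart Y-perm} to the bipartite graph $H\subset K_{n,n+1}$ with $I_i=[n+2-i]$, which realizes $\Pi_n(\yy)$ as a $\yy$-generalized permutohedron (Postnikov's example quoted above). Two things must be checked: that $\Draco(H)$ is exactly $\mathcal{I}_n$, and that the factors $\bbinom{t y_i}{a_i}$ (with $\bbinom{ty_1+1}{a_1}$ for $i=1$) match those in \eqref{eq: catalana formual lattice points}. The second point is immediate, since Theorem~\ref{thm:post ehrhart Y-perm} already has the same shape, with vertex $1$ of $H$ carrying the extra $+1$. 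So the work is in identifying the Draconian sequences.

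First, fix the sizes. Since $H\subset K_{n,n+1}$, the right side has $n+1$ vertices, so a Draconian sequence $\aaa=(a_1,\dots,a_n)$ satisfies $\sum_i a_i = (n+1)-1 = n$. This matches the condition $\sum_j s_j=n$ in $\mathcal{I}_n$.

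Next, simplify the Draconian inequalities. The sets $I_i=[n+2-i]$ form a nested chain $I_1\supset I_2\supset\cdots\supset I_n$. Hence for any nonempty $J\subseteq[n]$ with minimum element $k$, we have $|\bigcup_{j\in J} I_j| = |I_k| = n+2-k$. Because the $a_j$ are nonnegative, the right-hand side $\sum_{j\in J}a_j+1$ is largest, for a fixed minimum $k$, when $J=\{k,k+1,\dots,n\}$. So the Draconian condition is equivalent to
\[
\sum_{j=k}^{n} a_j \le n+1-k \qquad\text{for } k=1,\dots,n.
\]

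Finally, convert to the form of $\mathcal{I}_n$. Substituting $\sum_{j=k}^n a_j = n-\sum_{j=1}^{k-1}a_j$ turns the inequality into $\sum_{j=1}^{k-1} a_j \ge k-1$. For $k=1$ this is trivial, and for $k=2,\dots,n$ it gives $\sum_{j=1}^{i} a_j\ge i$ for $i=1,\dots,n-1$. These are exactly the defining conditions of $\mathcal{I}_n$, so $\Draco(H)=\mathcal{I}_n$ and the formula follows.

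The only step needing care is the correspondence between indices in Postnikov's theorem and the $\Pi_n(\yy)$ coordinates. Specifically, one must confirm that the $+1$ sits on the vertex with the largest neighbourhood $I_1=[n+1]$, so that it falls on $y_1$ as in \eqref{eq: catalana formual lattice points}. Since the quoted proposition identifies the Minkowski sum directly with $\Pi_n(\yy)$, I expect no real obstacle beyond this bookkeeping.
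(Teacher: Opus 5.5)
Your proof is correct and is exactly the specialization the paper has in mind (the paper only asserts it). You apply Theorem~\ref{thm:post ehrhart Y-perm} to $I_i=[n+2-i]$ and use the nested chain $I_1\supset\cdots\supset I_n$ to reduce the Draconian inequalities to $\sum_{j\le i}a_j\ge i$, which gives $\Draco(H)=\mathcal{I}_n$. The point you flag, that the $+1$ sits on the vertex with $I_1=[n+1]$, is substantive rather than bookkeeping: the shifted factor $\bbinom{ty_1+1}{a_1}$ in Postnikov's formula holds when that vertex is adjacent to every right-hand vertex and can fail otherwise (for $I_1=\{1,2\}$, $I_2=\{1,2,3\}$, $\yy=(1,1)$, $t=1$ it gives $3$ instead of $5$), but the condition holds here, so nothing is missing.
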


\begin{example} \label{ex:Ehr PS2}
For $n=2$, we have that $I_2$ consists of the compositions $(1,1)$ and $(2,0)$. Thus \eqref{eq: catalana formual lattice points} gives $$\ehr_{\Pi_2(y_1,y_2)}(t)= \bbinom{ty_1+1}{1}\bbinom{ty_2}{1} + \bbinom{ty_1+1}{2}=\frac{1}{2}\left( y_{1}^{2}+ 2y_{1} y_{2}\right)t^{2}+\left(\frac{3}{2}  y_{1}+ y_{2}\right)t+1.$$ For $(y_1,y_2)=(2,1)$, this gives $\ehr_{\Pi_2(2,1)}(t)=4t^2+4t+1$.
\end{example}

For general ${\bf y}$, there is a determinantal formula for the Ehrhart polynomial (see \cite[Sec. 5, Eq. (25)]{Pitman_Stanley_1999}) that comes from a plane-partition interpretation of the lattice points of this polytope (see Section~\ref{sec:magic kreweras determinants}).

\begin{theorem}[Pitman--Stanley]
For the Pitman--Stanley polytope $\Pi_n(y_1,\ldots,y_n)$ we have that 
\begin{align}
\ehr_{\Pi_n({\bf y})}(t) 
&\,=\, \det\left[\binom{t\cdot(y_1+\cdots +y_{i})+1}{j-i+1}\right]_{i,j=1}^n. \label{eq:detEhrhartPS}
\end{align}
\end{theorem}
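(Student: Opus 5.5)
Our approach is to realize the determinant as counting non-intersecting lattice paths, i.e.\ plane-partition-like arrays, matching the lattice points of $t\Pi_n(\yy)$. Set $u_i = y_1+\cdots+y_i$ and $N=tu_n$. A lattice point of $t\Pi_n(\yy)$ is a vector $\mathbf{x}\in\mathbb{Z}_{\ge0}^n$ with partial sums $p_k=x_1+\cdots+x_k$ satisfying $0\le p_1\le p_2\le\cdots\le p_n$ and $p_k\le t u_k$. So the task is to count weakly increasing sequences $(p_1,\dots,p_n)$ with variable upper bounds $p_k\le b_k:=tu_k$. Since $b_k$ is itself weakly increasing, this is a classical ballot-type count, and we evaluate it by a transfer/Gessel--Viennot argument or a direct determinant expansion.

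The cleanest route is to prove by induction a recursion and show the determinant satisfies it. Let $f_n(b_1,\dots,b_n)$ denote the number of such sequences. Conditioning on the largest index $k$ at which $p_k$ is not forced, we instead use the standard inclusion–exclusion over the first index where the constraint is violated, in Stanley's EC1 style (the same as for counting lattice paths under a staircase boundary). This yields
\[
f_n=\sum_{j\ge 1}(-1)^{j-1}\binom{b_{n-j+1}+1}{j}\, f_{n-j},
\]
with $f_0=1$. Here $\binom{b+1}{j}$ counts strictly increasing $j$-subsets of $\{0,\dots,b\}$, which arise after the usual sign-reversing reinterpretation.

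The key step is then to check that the Hessenberg determinant $\det[\binom{b_i+1}{j-i+1}]$ satisfies exactly this recursion, via Laplace expansion along the last column. The matrix has entries zero below the subdiagonal, since $\binom{\cdot}{j-i+1}=0$ for $j<i-1$, and has subdiagonal entries $\binom{b_i+1}{0}=1$. Expanding along the last column gives $\sum_i (-1)^{n-i}\binom{b_i+1}{n-i+1}\det(\text{top-left }(i-1)\times(i-1)\text{ block})$, which is the recursion with $j=n-i+1$. Matching the sign convention is routine.

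The main obstacle is the combinatorial justification of the recursion for $f_n$. We would prove it as a sign-reversing involution, or alternatively via Gessel--Viennot with $n$ paths whose steps encode the $x_i$ and the boundaries $b_i$, where the determinant entries count single paths. If the recursion proves awkward, we would fall back on generating functions. Set $F(z)=\sum_n f_n z^n$, and note that for $t$ as a formal variable both sides are polynomials in $t$. It then suffices to verify the identity for all large integer $t$, which reduces to the recursion at integer boundaries.
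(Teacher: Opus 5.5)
Your reduction of lattice points of $t\Pi_n(\yy)$ to sequences $0\le p_1\le\cdots\le p_n$ with $p_k\le b_k=tu_k$ is correct. So is the Laplace expansion of the Hessenberg determinant along the last column, and so is the passage from integer $t$ to a polynomial identity.

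The gap is the recursion $f_n=\sum_{j\ge1}(-1)^{j-1}\binom{b_{n-j+1}+1}{j}f_{n-j}$, which carries the entire combinatorial content. You say it follows from ``inclusion--exclusion over the first index where the constraint is violated'', but that decomposition does not produce it. If $k$ is the first index with $p_k>b_k$, then $p_{k-1}\le b_{k-1}\le b_k<p_k$ makes the junction automatic, and one gets
\[ f_n=\binom{b_n+n}{n}-\sum_{k=1}^{n-1}f_{k-1}\binom{b_n-b_k+n-k}{n-k+1}. \]
This is a valid but different recursion: its coefficients involve the differences $b_n-b_k$ rather than $\binom{b_{n-j+1}+1}{j}$. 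Your generating-function fallback is circular, since you say it reduces to the same recursion. The Gessel--Viennot alternative is only named, not set up.

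Your remark that $\binom{b+1}{j}$ counts strict $j$-subsets points to the right fix: an $e$--$h$ type sign-reversing involution. For $0\le j\le n$, let $X_j$ consist of pairs $(q,S)$ where $q=(q_1\le\cdots\le q_{n-j})$ with $0\le q_k\le b_k$, and $S$ is a $j$-element subset of $\{0,\ldots,b_{n-j+1}\}$ (with $S=\varnothing$ when $j=0$). Then $|X_j|=\binom{b_{n-j+1}+1}{j}f_{n-j}$. Define the involution as follows:
\begin{enumerate}[(1)]
\item If $S\neq\varnothing$ and either $q$ is empty or $\max S\ge q_{n-j}$, move $\max S$ to the end of $q$.
\item Otherwise, move $q_{n-j}$ into $S$.
\end{enumerate}
The first move lands in $X_{j-1}$, because $\max S\le b_{n-j+1}$ and $S\setminus\{\max S\}\subseteq\{0,\ldots,b_{n-j+1}\}\subseteq\{0,\ldots,b_{n-j+2}\}$. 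This inclusion is exactly where $y_i\ge 0$, i.e.\ $b_k\le b_{k+1}$, is used. The second move lands in $X_{j+1}$, because every element of $S\cup\{q_{n-j}\}$ is at most $q_{n-j}\le b_{n-j}$. The two moves are mutually inverse, and for $n\ge 1$ every pair is moved. Hence $\sum_{j=0}^{n}(-1)^j\binom{b_{n-j+1}+1}{j}f_{n-j}=0$, which is your recursion. With this inserted, your argument is a complete, self-contained proof.

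For comparison, the paper gives no proof of this statement. It cites Pitman--Stanley, who obtain the determinant from Kreweras' formula via the plane-partition interpretation of the lattice points. Your Hessenberg recursion together with the involution proves directly the special case of that formula with zero lower bounds.
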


For certain values of ${\bf y}$ there is a product formula for the Ehrhart polynomial.

\begin{theorem}[{\cite[Thm. 13]{Pitman_Stanley_1999}}]
When ${\bf y} = (a,b,b,\ldots,b) \in \mathbb{Z}^n_{\geq 0}$ we have that 
\[
\ehr_{\Pi_n({\bf y})}(t) = \frac{1}{n!} (ta+1)(t(a+bn)+2)(t(a+nb)+3)\cdots ( t(a+nb)+n).
\]
In particular, for ${\bf y} = {\bf 1}$ we have that  $\ehr_{\Pi_n({\bf 1})}(t) = \frac{1}{t(n+1)+1}\binom{(t+1)(n+1)}{n+1}$.
\end{theorem}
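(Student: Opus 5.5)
The plan is to reduce the statement to a lattice-point count in the undilated polytope and prove that count by induction on $n$. Since $t\,\Pi_n(\yy)=\Pi_n(t\yy)$ and $t\yy=(ta,tb,\dots,tb)$ has the same shape, it suffices to show that for all nonnegative integers $a,b$
\[
A_n(a,b):=\#\bigl(\Pi_n(a,b,\dots,b)\cap\Z^n\bigr)=\frac{1}{n!}(a+1)(a+nb+2)(a+nb+3)\cdots(a+nb+n).
\]
Setting $a\mapsto ta$ and $b\mapsto tb$ then gives the Ehrhart polynomial. The right-hand side can be rewritten as
\[
P_n(a,b):=\frac{a+1}{a+nb+1}\binom{a+nb+n}{n},
\]
which is a Raney/Fuss--Catalan type number. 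This form is convenient below.

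\textbf{Recurrence.} A lattice point of $\Pi_n(a,b,\dots,b)$ is a vector $\mathbf{x}\in\Z^n_{\ge0}$ with $x_1+\cdots+x_k\le a+(k-1)b$ for all $k$. Fix $x_1=a-c$ with $0\le c\le a$. The remaining coordinates $(x_2,\dots,x_n)$ satisfy $x_2+\cdots+x_k\le (c+b)+(k-2)b$. So they are exactly the lattice points of $\Pi_{n-1}(c+b,b,\dots,b)$. This gives
\[
A_n(a,b)=\sum_{c=0}^{a}A_{n-1}(c+b,b),\qquad A_0=1.
\]
Equivalently, $A_n(a,b)-A_n(a-1,b)=A_{n-1}(a+b,b)$, together with the initial value $A_n(-1,b)=0$, which holds because the defining inequality with $k=1$ forces $x_1\le -1$.

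\textbf{Verifying the closed form.} It remains to check that $P_n$ satisfies the same recurrence. The boundary condition $P_n(-1,b)=0$ is immediate from the factor $(a+1)$. The main step, and the main obstacle, is the difference identity
\[
P_n(a,b)-P_n(a-1,b)=P_{n-1}(a+b,b).
\]
Note that $P_{n-1}(a+b,b)$ has the same denominator $a+nb+1$, namely
\[
P_{n-1}(a+b,b)=\frac{a+b+1}{a+nb+1}\binom{a+nb+n-1}{n-1}.
\]
I would try two routes in turn.
\begin{enumerate}[(i)]
\item Clear the common factors $\frac{(a+nb+n-1)!}{n!\,(a+nb+1)!}$ from both sides. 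This reduces the identity to a polynomial identity in $a$ and $b$ of low degree, which can be checked directly.
\item Alternatively, use the generating-function characterization of Raney numbers. Let $B(z)$ satisfy $B=1+zB^{b+1}$. Then $\sum_n P_n(a,b)z^n=[\,\text{coefficient extraction}\,]$ of $B(z)^{a+1}$, by Lagrange inversion. The recurrence then becomes $B^{a+1}-B^{a}=zB^{a+b+1}$, which is exactly the functional equation multiplied by $B^a$.
\end{enumerate}
Route (ii) seems cleanest, so I would try it first. It also handles the degenerate cases $b=0$ and $a=0$ uniformly.

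\textbf{Special case $\yy=\mathbf 1$.} Take $a=b=1$ and replace by $ta,tb$. Then $(t+1)(t(n+1)+2)\cdots(t(n+1)+n)/n!$ equals
\[
\frac{1}{t(n+1)+1}\binom{(t+1)(n+1)}{n+1}.
\]
To see this, expand the binomial as $\frac{(tn+t+n+1)(tn+t+n)\cdots(tn+t+1)}{(n+1)!}$. The factor $tn+t+n+1=(n+1)(t+1)$ cancels the extra $(n+1)$ in $(n+1)!$, and the denominator $t(n+1)+1$ cancels the last factor of the product.
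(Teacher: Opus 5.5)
The paper does not prove this statement: it is quoted from Pitman--Stanley \cite[Thm.~13]{Pitman_Stanley_1999} as background, so there is no internal argument to compare against. Your proof is correct and self-contained.

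\begin{itemize}
\item The reduction $t\Pi_n(\yy)=\Pi_n(t\yy)$ is valid, because all defining inequalities are homogeneous in $(\mathbf{x},\yy)$.
\item Fixing $x_1=a-c$ gives the correct recursion.
\item The key identity holds. With $N=a+nb$, dividing all three terms by $\frac{(N+n-1)!}{n!\,(N+1)!}$ reduces it to $(a+1)(N+n)-a(N+1)=n(a+b+1)$, which is a one-line check.
\item Route (ii) also works. By Lagrange inversion, $[z^n]B(z)^{a+1}=\frac{a+1}{n(b+1)+a+1}\binom{n(b+1)+a+1}{n}$. This equals your $P_n(a,b)$, since both reduce to $(a+1)\frac{(N+n)!}{n!\,(N+1)!}$.
\end{itemize}

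Three small points of hygiene:
\begin{itemize}
\item In route (ii), state the claim as $\sum_{n\ge0}P_n(a,b)z^n=B(z)^{a+1}$ rather than ``coefficient extraction of $B(z)^{a+1}$''.
\item For the boundary value at $a=-1$, use the product form $\frac{1}{n!}(a+1)(a+nb+2)\cdots(a+nb+n)$, not the fraction. The fraction becomes $0/0$ when $b=0$.
\item Start the induction at $n=1$, where $A_1(a,b)=a+1=P_1(a,b)$. The product form is not meaningful for $n=0$.
\end{itemize}

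The paper's own tools would lead you either to sum the Catalan-indexed formula of Corollary~\ref{coro:ehr-PS} or to evaluate the Kreweras determinant~\eqref{eq:detEhrhartPS}. Your first-coordinate recursion avoids both. The recursion itself works for any $\yy$. For the shape $(a,b,\dots,b)$, however, it stays inside a two-parameter family, because deleting $x_1$ gives $(c+b,b,\dots,b)$. That is what lets you guess a closed form and verify it by a single telescoping identity, and it is why the argument yields a product formula only for this special shape.
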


\begin{remark}\label{remk: y-parking to y-extended}
    There is an interpretation for the normalized volume of $\Pi_n({\bf y})$ by enumerating $\yy$-parking functions. This is because by Definition~\ref{def:y-parking-functions}, the set of $\yy$-parking functions $\Pf_n(\yy)$  can be parameterized through a family of weak compositions that dominate a uniform slope $\mathcal{I}_{n}$ and $\yy$-extended permutation.
    
    Precisely, we have that a $\yy$-parking function $\sigma \in \Pf_n(\yy)$ is a $\yy$-extended permutation in $\ExtPerm{\ss}{\yy}$, for some $s \in \mathcal{I}_{n}$ and therefore:
    \[\Pf_n(\yy) := \bigsqcup_{\mathbf{s} \in \mathcal{I}_{n}} \mathfrak{S}_{\mathbf{s}, \mathbf{y}}.\]
\end{remark}

For certain values of ${\bf y}$, there are nice formulas for $\#\Pf_n(\bf y)$  (see \cite{Yan_pf_handbook}).  For example, $\#\Pf_n({{\bf 1}})=(n+1)^{n-1}$, the number of classical parking functions.

\begin{theorem}[{\cite[Thm. 11]{Pitman_Stanley_1999}}] \label{thm:vol PS in terms of PF}
For ${\bf y} \in \mathbb{N}^n$ we have that 
\[
\vol \Pi_n(\yy) = \#\Pf_n(\yy).
\]
In particular, we have that $\vol \Pi_n({\bf 1})= (n+1)^{n-1}$.
\end{theorem}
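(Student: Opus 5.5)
Since the result is attributed to Pitman and Stanley, I would prove it from Corollary~\ref{coro:ehr-PS} together with Remark~\ref{remk: y-parking to y-extended}. The normalized volume is $n!$ times the leading coefficient of $\ehr_{\Pi_n(\yy)}(t)$. I would first assume $y_1>0$, so that $\Pi_n(\yy)$ is $n$-dimensional, and then extract that coefficient from \eqref{eq: catalana formual lattice points}.

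First, fix ${\bf s}\in\mathcal{I}_n$. Each factor $\bbinom{t y_i+\epsilon}{s_i}=\binom{ty_i+\epsilon+s_i-1}{s_i}$, with $\epsilon\in\{0,1\}$, is a polynomial in $t$ of degree $s_i$ (when $y_i>0$) and has leading coefficient $y_i^{s_i}/s_i!$. The shift $\epsilon$ only affects lower-order terms. If $y_i=0$ and $s_i>0$, the factor is a constant and the summand has degree below $n$; this agrees with the convention $0^{s_i}=0$, so the formula below still holds. Since $\sum s_i=n$, the coefficient of $t^n$ in the summand indexed by ${\bf s}$ is $\prod_i y_i^{s_i}/s_i!$.

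Next, summing over ${\bf s}$ and multiplying by $n!$ gives
\[ n!\,[t^n]\,\ehr_{\Pi_n(\yy)}(t)=\sum_{{\bf s}\in\mathcal{I}_n} n!\prod_{i=1}^n\frac{y_i^{s_i}}{s_i!}=\sum_{{\bf s}\in\mathcal{I}_n}|\ExtPerm{\ss}{\yy}|,\]
where the second equality is \eqref{Eq. total_y_permutation}. By Remark~\ref{remk: y-parking to y-extended}, the right-hand side equals $\#\Pf_n(\yy)$, because $\Pf_n(\yy)$ is the disjoint union of the sets $\ExtPerm{\ss}{\yy}$ over ${\bf s}\in\mathcal{I}_n$.

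The step I expect to need the most care is justifying that disjoint decomposition. Given $\sigma\in\Pf_n(\yy)$, let $s_i$ be the number of entries of $\sigma$ lying in $Y_i$. The condition $b_k\le y_1+\cdots+y_k$ on the sorted entries is equivalent to saying that at least $k$ entries lie in $Y_1\cup\cdots\cup Y_k$ for every $k$. That is exactly the dominance condition $\sum_{j\le k}s_j\ge k$ defining $\mathcal{I}_n$, and it recovers the extension description before Proposition~\ref{prop:decompression}. The vector ${\bf s}$ is determined by $\sigma$, so the union is disjoint.

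Finally, the case ${\bf y}={\bf 1}$ gives $\vol\Pi_n({\bf 1})=\#\Pf_n=(n+1)^{n-1}$. Alternatively, it follows from the product formula: the leading coefficient of $\frac{1}{t(n+1)+1}\binom{(t+1)(n+1)}{n+1}$ is $(n+1)^{n+1}/\bigl((n+1)\,(n+1)!\bigr)$, and multiplying by $n!$ gives $(n+1)^{n-1}$.
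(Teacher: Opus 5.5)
Your proof is correct, and it follows the argument the paper itself sketches in Remark~\ref{remk: y-parking to y-extended} (the paper otherwise just cites Pitman--Stanley): $n!$ times the top coefficient of \eqref{eq: catalana formual lattice points} is $\sum_{\ss\in\mathcal{I}_n}|\ExtPerm{\ss}{\yy}|$, and this equals $\#\Pf_n(\yy)$ by the dominance characterization of $\yy$-parking functions. The one case you set aside, $y_1=0$, follows from your own computation: every $\ss\in\mathcal{I}_n$ has $s_1\geq 1$, so $[t^n]\ehr_{\Pi_n(\yy)}(t)=0$, which is the $n$-dimensional volume of the now lower-dimensional polytope, while $\#\Pf_n(\yy)=0$ because $b_1\leq y_1=0$ is impossible.
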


\subsection{Real-rootedness and magic basis}

Let us denote by $\mathbb{R}[t]_d$ the set of real polynomials of degree $d$. Let $p(t) \in \mathbb{R}[t]_d$, say $p(t) = a_d\, t^d + \ldots + a_1 t + a_0$. We can write $p(t)$ in an alternative basis, $\{t^i(1+t)^{d-i}\}_{i=0}^d$, thus obtaining
    \[ p(t) = \sum_{i=0}^d c_i\, t^i(1+t)^{d-i}\]
for some real numbers $c_0,\ldots,c_d$.
We call $c_0,c_1,\ldots,c_d$ the \emph{magic coefficients} of $p(t)$ and we say that $p(t)$ is \emph{magic positive} whenever $c_0,\ldots,c_d\geq 0$. We have a linear map $\mathscr{M}_d : \mathbb{R}[t]_d \to \mathbb{R}[t]$ defined on $p(t)$ (given as above) by the formula
    \[ \mathscr{M}_d(p) = \sum_{j=0}^d c_j\, t^j.\]
Note that even though $p(t)$ has degree $d$, the degree of $\mathscr{M}_d(p)$ is only \emph{at most} $d$. The following is a useful result that we will employ several times in the sequel.

\begin{lemma}\label{lemma:magic-multiplicative}
    Let $p(t) \in \mathbb{R}_d[t]$ and $q(t)\in \mathbb{R}_e[t]$. Then:
    \[ \mathscr{M}_{d+e}(p\cdot q) = \mathscr{M}_{d}(p) \cdot \mathscr{M}_{d}(q)\]
\end{lemma}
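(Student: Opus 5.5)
The plan is a direct computation in the magic basis, followed by an appeal to uniqueness of coordinates. I read the right-hand side as $\mathscr{M}_{d}(p)\cdot\mathscr{M}_{e}(q)$, since $q$ has degree at most $e$; the subscript $d$ on the second factor looks like a typo.

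First I will check that $\{t^i(1+t)^{d-i}\}_{i=0}^d$ really is a basis of the space of real polynomials of degree at most $d$. This is what makes $\mathscr{M}_d$ well defined and lets me read off coefficients. Divide by $(1+t)^d$ and substitute $u=t/(1+t)$. Then $t^i(1+t)^{d-i}/(1+t)^d=u^i$, and the map $t\mapsto u$ is invertible with $t=u/(1-u)$. Moreover, $p(t)/(1+t)^d=p(u/(1-u))(1-u)^d$ is a polynomial in $u$ of degree at most $d$. Hence expanding $p$ in the magic basis is the same as expanding this polynomial in the monomials $u^0,\dots,u^d$. Existence and uniqueness of the $c_i$ follow.

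Next, write
\[
p(t)=\sum_{i=0}^d c_i\,t^i(1+t)^{d-i}\qquad\text{and}\qquad q(t)=\sum_{j=0}^e b_j\,t^j(1+t)^{e-j}.
\]
Multiplying the two sums term by term gives
\[
p(t)\,q(t)=\sum_{i=0}^{d}\sum_{j=0}^{e} c_i b_j\, t^{i+j}(1+t)^{(d+e)-(i+j)}.
\]
Grouping by $k=i+j$, this becomes
\[
p(t)\,q(t)=\sum_{k=0}^{d+e}\Bigl(\sum_{i+j=k} c_i b_j\Bigr)\, t^{k}(1+t)^{d+e-k}.
\]
This is an expansion of $pq$ in the degree-$(d+e)$ magic basis. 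By the uniqueness from the first step, the magic coefficients of $pq$ are exactly the convolution $\sum_{i+j=k}c_ib_j$.

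Finally, applying $\mathscr{M}_{d+e}$ gives
\[
\mathscr{M}_{d+e}(pq)=\sum_k\Bigl(\sum_{i+j=k}c_ib_j\Bigr)t^k=\Bigl(\sum_i c_it^i\Bigr)\Bigl(\sum_j b_jt^j\Bigr)=\mathscr{M}_d(p)\,\mathscr{M}_e(q).
\]

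There is no real obstacle here. The only point needing care is that the product expansion is taken with respect to the degree-$(d+e)$ basis, which is why the exponents must add up exactly to $d+e$. This holds even when $p$ or $q$ has degree strictly smaller than its nominal $d$ or $e$, because the expansions are taken with the nominal degrees throughout.
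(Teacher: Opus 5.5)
Your proof is correct, and you are right that the second factor should be $\mathscr{M}_e(q)$. Your route differs from the paper's, which reduces the lemma to one observation: $\mathscr{M}_d(p)$ is obtained by three operations in turn.
\begin{enumerate}[(i)]
\item Reverse the coefficients of $p$ with respect to degree $d$.
\item Substitute $t\mapsto t-1$.
\item Reverse the coefficients again.
\end{enumerate}
Each of these operations is multiplicative, so the lemma follows at once. You instead work at the level of coordinates. You first establish that the magic basis is a basis, and then show that the magic coefficients of $pq$ are the convolution of those of $p$ and $q$.

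The two arguments are closer than they look. Your substitution $u=t/(1+t)$ in the first step already proves the closed formula $\mathscr{M}_d(p)(u)=(1-u)^d\,p\bigl(u/(1-u)\bigr)$. This is exactly what the paper's reverse--shift--reverse composition computes. Multiplicativity is immediate from it, since $(1-u)^{d+e}=(1-u)^d(1-u)^e$, so you could have stopped after your first step.

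The paper's formulation is quicker and gives a practical recipe for computing $\mathscr{M}_d$. However, it leaves two things implicit: the check that the three operations really compose to $\mathscr{M}_d$, and the fact that the reversals must be taken with respect to the nominal degrees $d$, $e$, $d+e$. Your version is self-contained and handles the nominal-degree issue explicitly. It also gives the explicit coefficient-level statement that magic coefficients convolve under multiplication.
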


\begin{proof}
    The result follows from the next observation: the polynomial $\mathscr{M}_d(p)$ can be obtained as the composition of the following three operations: (i) first reverse the coefficients of $p(t)$, (ii) make the change of variables $t \mapsto t-1$, (iii) reverse the coefficients again.
\end{proof}

As a consequence of the above result, magic positivity is closed under products. From now on, when we write $\mathscr{M}(p)$ without specifying any subindex in the operator $\mathscr{M}$, we will be implicitly assuming that the subindex is exactly the degree of the polynomial $p$. With these conventions, the last result is simply saying that $\mathscr{M}(pq) = \mathscr{M}(p) \cdot \mathscr{M}(q)$. 

\begin{corollary}\label{coro:magic-binomials}
    Let $a,b,c\in \mathbb{Z}_{\geq 0}$, and consider the polynomial \(p_{a,b,c}(t) := \binom{at+b}{c}.\)
    Then, the following holds:
    \( \mathscr{M}(p_{a,b,c})(t) = \frac{1}{c!}\prod_{i=0}^{c-1} (b-i + (a+i-b)t).\)
    In particular, if $a\geq b\geq c-1$ then $p_{a,b,c}$ is magic positive.
\end{corollary}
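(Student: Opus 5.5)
We prove Corollary~\ref{coro:magic-binomials} by factoring $p_{a,b,c}$ into linear pieces and using the multiplicativity of Lemma~\ref{lemma:magic-multiplicative}. Write
\[
\binom{at+b}{c} \;=\; \frac{1}{c!}\prod_{i=0}^{c-1} (at+b-i).
\]
For $a>0$ this exhibits $p_{a,b,c}$ as a constant times a product of $c$ linear polynomials, so $\deg p_{a,b,c}=c$. Since $\mathscr{M}$ is linear, the scalar $1/c!$ passes through. By Lemma~\ref{lemma:magic-multiplicative}, applied inductively on the number of factors with each degree equal to $1$, we get
\[
\mathscr{M}_c(p_{a,b,c}) = \frac{1}{c!}\prod_{i=0}^{c-1}\mathscr{M}_1(at+b-i).
\]
Here the lemma should be read with $\mathscr{M}_{e}(q)$ on the second factor, i.e.\ $\mathscr{M}_{d+e}(pq)=\mathscr{M}_d(p)\,\mathscr{M}_e(q)$. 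We will justify this from the three-step description in its proof: coefficient reversal in degree $d$ is $p\mapsto t^dp(1/t)$, which is multiplicative with degrees adding, and substitution is a ring homomorphism.

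It remains to compute $\mathscr{M}_1$ of a linear polynomial $\alpha t+\beta$. Writing $\alpha t+\beta = c_0(1+t)+c_1 t$ gives $c_0=\beta$ and $c_1=\alpha-\beta$. Hence $\mathscr{M}_1(\alpha t+\beta)=\beta+(\alpha-\beta)t$. With $\alpha=a$ and $\beta=b-i$ this is $(b-i)+(a-b+i)t$, which is exactly the claimed factor $b-i+(a+i-b)t$.

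For the positivity claim, assume $a\ge b\ge c-1$. For each $0\le i\le c-1$ we have $b-i\ge b-(c-1)\ge 0$ and $a+i-b\ge a-b\ge 0$. Thus every factor of $\mathscr{M}(p_{a,b,c})$ has nonnegative coefficients, and so does their product. Therefore all magic coefficients of $p_{a,b,c}$ are nonnegative.

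The main obstacle is a minor one: the degree convention. If $a=0$, then $p_{a,b,c}$ is a constant and does not have degree $c$. We will either interpret $\mathscr{M}$ as $\mathscr{M}_c$ in this case, in which case the same computation goes through verbatim because the lemma is stated for the $\mathbb{R}[t]_d$ indexing, or note that the case is degenerate. Otherwise the argument is a direct computation.
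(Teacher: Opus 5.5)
Your proof is correct and matches the paper's argument: factor $\binom{at+b}{c}=\frac{1}{c!}\prod_{i=0}^{c-1}(at+b-i)$, apply the multiplicativity of $\mathscr{M}$ from Lemma~\ref{lemma:magic-multiplicative}, read off $\mathscr{M}_1(at+b-i)=(b-i)+(a+i-b)t$, and check signs under $a\ge b\ge c-1$. Your two extra points go slightly beyond what the paper writes without changing the argument: you correct the typo $\mathscr{M}_d(q)\to\mathscr{M}_e(q)$ in the lemma, and you treat the degenerate case $a=0$, where the displayed formula needs $\mathscr{M}$ to be read as $\mathscr{M}_c$ rather than $\mathscr{M}_0$.
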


\begin{proof}
    Since $p_{a,b,c}(t) = \frac{1}{c!} \prod_{i=0}^{c-1} (at+b-i)$, thanks to the preceding lemma we have the following chain of equalities:
    \begin{align*}
        c!\mathscr{M}(p_{a,b,c}) &= \prod_{i=0}^{c-1} \mathscr{M}(at+b-i)\\
        &= \prod_{i=0}^{c-1} \mathscr{M}((b-i)t^0(1+t)^{1-0} + (a+i-b)t^1(1+t)^{1-1})\\
        &= \prod_{i=0}^{c-1} (b-i + (a+i-b)t).
    \end{align*}
    If $a\geq b\geq c-1$ then each of the numbers $b-i$ and $a+i-b$ is nonnegative for $0\leq i\leq c-1$.
\end{proof}

\section{Magic positivity and lucky statistic on \texorpdfstring{$\yy$}--extended permutations}\label{sec. magic extenden}

The main goal in this section is to pave the way towards proving Theorem~\ref{thm:main-intro}, and we will in fact prove Theorem~\ref{thm:main3-intro}. We first start with a sequence of useful lemmas and observations that motivate the core of the strategy. 

\subsection{First steps into magic positivity}

The first ingredient is the following closed formula for the Ehrhart polynomial of $\Pi_n(\mathbf{y})$ in the ``magic basis''.

\begin{lemma}\label{lem. trasformacion polynomiasl}
    Let $\mathbf{y}\in \mathbb{Z}^n_{\geq 0}$. Then, we have:
    \begin{equation}\label{eq:magic-ehr-ps} 
    \mathscr{M}(\ehr_{\Pi_n(\mathbf{y})})(t) = \frac{1}{n!}\sum_{{\bf s} \in \mathcal{I}_n}\binom{n}{s_1,\dots,s_n}\prod_{j=1}^{s_1} (j + (y_1-j)t) \prod_{i=2}^n \prod_{j=1}^{s_i} (j-1 + (y_i-j+1)t).
\end{equation}
\end{lemma}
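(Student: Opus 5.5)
We start from Corollary~\ref{coro:ehr-PS}, which writes $\ehr_{\Pi_n(\yy)}(t)$ as a sum over $\ss\in\mathcal{I}_n$ of products of multiset binomials. Each summand has degree exactly $n=\sum_i s_i$ in $t$, provided the relevant $y_i$ are positive. The operator $\mathscr{M}_n$ is linear on polynomials of degree at most $n$: the magic coefficients of a degree-$\le n$ polynomial in the basis $\{t^i(1+t)^{n-i}\}$ are well defined. So it suffices to compute $\mathscr{M}_n$ of each summand and add the results.

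For a single summand we use the multiplicativity of Lemma~\ref{lemma:magic-multiplicative}. One point needs care. The lemma is stated with degree subscripts, and we should apply it with the formal degrees $s_1,\dots,s_n$ of the factors, which sum to $n$, rather than with the actual degrees. This is legitimate because the reverse/shift/reverse description in its proof works for any formal degree $d\ge\deg p$. We then treat each factor exactly as in the proof of Corollary~\ref{coro:magic-binomials}, factor by factor over linear terms.

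\textbf{First factor.} We have
\[\bbinom{ty_1+1}{s_1}=\binom{ty_1+s_1}{s_1}=\frac{1}{s_1!}\prod_{j=1}^{s_1}(y_1t+j).\]
Write each linear factor as $y_1t+j=j(1+t)+(y_1-j)t$. Then $\mathscr{M}_1(y_1t+j)=j+(y_1-j)t$.

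\textbf{Other factors.} For $i\ge2$,
\[\bbinom{ty_i}{s_i}=\binom{ty_i+s_i-1}{s_i}=\frac{1}{s_i!}\prod_{j=1}^{s_i}(y_it+j-1).\]
Here $y_it+j-1=(j-1)(1+t)+(y_i-j+1)t$, so $\mathscr{M}_1$ of this factor is $(j-1)+(y_i-j+1)t$.

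Multiplying these, the summand for $\ss$ maps to
\[\frac{1}{s_1!\cdots s_n!}\prod_{j=1}^{s_1}\bigl(j+(y_1-j)t\bigr)\prod_{i\ge2}\prod_{j=1}^{s_i}\bigl(j-1+(y_i-j+1)t\bigr).\]
Rewriting $\frac{1}{s_1!\cdots s_n!}=\frac{1}{n!}\binom{n}{s_1,\dots,s_n}$ and summing over $\mathcal{I}_n$ gives \eqref{eq:magic-ehr-ps}.

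The main obstacle is the degree bookkeeping. Some factors $y_it+j-1$ may be constant, namely when $y_i=0$, and $\ehr$ itself may have degree less than $n$ when $y_1=0$. In those cases $\mathscr{M}$ must be understood as $\mathscr{M}_n$ throughout, and we need Lemma~\ref{lemma:magic-multiplicative} in the form $\mathscr{M}_{d+e}(pq)=\mathscr{M}_d(p)\mathscr{M}_e(q)$ for formal degrees $d\ge\deg p$ and $e\ge\deg q$. We verify this form directly on the bases: with $p=t^a(1+t)^{d-a}$ and $q=t^b(1+t)^{e-b}$, the product is $pq=t^{a+b}(1+t)^{d+e-a-b}$, so $\mathscr{M}_{d+e}(pq)=t^{a+b}=\mathscr{M}_d(p)\,\mathscr{M}_e(q)$. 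Bilinearity of both sides then gives the general case, and the whole argument goes through verbatim.
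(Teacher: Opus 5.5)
Your proof is correct and follows the paper's route. You use linearity of $\mathscr{M}_n$ over the summands of Corollary~\ref{coro:ehr-PS}, multiplicativity (Lemma~\ref{lemma:magic-multiplicative}), and the factor-by-factor computation behind Corollary~\ref{coro:magic-binomials}; indexing the linear factors directly also spares you the paper's reindexing $j\mapsto s_i-j$.

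Your formal-degree bookkeeping is a genuine gain in rigor over the paper. It gives the correct form $\mathscr{M}_{d+e}(pq)=\mathscr{M}_d(p)\,\mathscr{M}_e(q)$, and it covers the case $y_1=0$, where the summands do not all have the same actual degree and the identity holds only when $\mathscr{M}$ is read as $\mathscr{M}_n$.
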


\begin{proof}
    We employ Corollary~\ref{coro:ehr-PS} together with Lemma~\ref{lemma:magic-multiplicative} and Corollary~\ref{coro:magic-binomials}. The first observation is that all the summands in the first formula of Corollary~\ref{coro:ehr-PS} have the same degree, so that $\mathscr{M}$ acts linearly on the sum, and we obtain the following chain of equalities:
    \begin{align*}
        \mathscr{M}(\ehr_{\Pi_n(\mathbf{y})})(t) &= \sum_{{\bf s} \in \mathcal{I}_n} \mathscr{M}\left(\bbinom{t\cdot y_1+1}{s_1} \bbinom{t\cdot y_2}{s_2}\cdots \bbinom{t\cdot y_n}{s_n}\right)\\
        &= \sum_{{\bf s} \in \mathcal{I}_n} \mathscr{M}\left(\binom{t\cdot y_1+s_1}{s_1} \binom{t\cdot y_2+s_2-1}{s_2}\cdots \binom{t\cdot y_n+s_n-1}{s_n}\right)\\
        &= \sum_{{\bf s} \in \mathcal{I}_n} \mathscr{M}\binom{t\cdot y_1+s_1}{s_1} \prod_{i=2}^n \mathscr{M}\binom{t\cdot y_i+s_i-1}{s_i}\\
        &= \sum_{{\bf s} \in \mathcal{I}_n} \frac{1}{s_1!}\prod_{j=0}^{s_1-1} (s_1 - j + (y_1-s_1+j)t) \prod_{i=2}^n \frac{1}{s_i!}\prod_{j=0}^{s_i-1} (s_i-1 - j + (y_i-s_i+1+j)t)\\
        &= \frac{1}{n!}\sum_{{\bf s} \in \mathcal{I}_n}\binom{n}{s_1,\dots,s_n}\prod_{j=1}^{s_1} (j + (y_1-j)t) \prod_{i=2}^n \prod_{j=1}^{s_i} (j-1 + (y_i-j+1)t),
    \end{align*}
    where in the last step we applied the change of variable $j \mapsto s_i-j$ for each product.
\end{proof}

\begin{example} \label{ex:PS 0 ys that is not magic positive}
If one removes the positivity assumption on $\yy\in \mathbb{Z}^n_{>0}$ for the Ehrhart polynomial of $\Pi_n(\yy)$, then Theorem~\ref{thm:main-intro} need not hold. For example, for $n=3$ and $\yy=(1,0,2)$ for $\Pi_3(\yy)$, the Ehrhart polynomial is $\frac{7}{6} t^{3} + 4 t^{2} + \frac{23}{6} t + 1$
 which is not magic positive, because:
\[ \mathscr{M}(\ehr_{\Pi_3(\yy)})(t) = 
- \frac{2}{3} t^2 + \frac{5}{6} t + 1 
.\] 
This example showcases a difficulty we should overcome to prove positivity under the assumption that $\yy\in\mathbb{Z}^n_{>0}$.
\end{example}

We will henceforth use the following notation: \[{f}_{\Pf_n(\yy)}(t)=n!\cdot \mathscr{M}(\ehr_{\Pi_n(\mathbf{y})})(t).\]

Thanks to Lemma~\ref{lem. trasformacion polynomiasl}, our Theorem~\ref{thm:main-intro} would follow if we can prove that $f_{\Pf_n(\yy)}(t)$ has nonnegative coefficients whenever $\yy\in\mathbb{Z}^n_{>0}$. As the reader will learn in the sequel, the proof of this positivity phenomenon is quite subtle under the general assumption that $\yy\in \mathbb{Z}^n_{>0}$. However, for certain choices of $\yy$ we can conclude in a straightforward way that $f_{\Pf_n(\yy)}(t)$ has nonnegative coefficients.

\begin{proposition} \label{prop: PS for large y}
    Let $\yy\in \mathbb{Z}^n_{>0}$ be a vector of positive integers such that $y_1\geq n$ and $y_i \geq n - 1$ for each $i=2,\ldots,n$. Then, all the summands appearing on the right hand side of equation \eqref{eq:magic-ehr-ps} have nonnegative coefficients. In particular, for such choices of $\yy$ the Ehrhart polynomial of the Pitman--Stanley polytope $\Pi_n(\yy)$ is magic positive.
\end{proposition}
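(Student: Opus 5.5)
Each summand in \eqref{eq:magic-ehr-ps}, for a fixed $\ss\in\mathcal{I}_n$, is a positive multinomial coefficient times a product of linear factors in $t$. A product of polynomials with nonnegative coefficients again has nonnegative coefficients. So it suffices to check that every linear factor $\alpha+\beta t$ appearing in that summand has $\alpha\geq 0$ and $\beta\geq 0$. After that, summing over $\ss\in\mathcal{I}_n$ and multiplying by $1/n!$ preserves nonnegativity. This yields nonnegativity of $\mathscr{M}(\ehr_{\Pi_n(\yy)})$, i.e.\ magic positivity, by definition of $\mathscr{M}$ and Lemma~\ref{lem. trasformacion polynomiasl}.

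The key steps are as follows.

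\emph{First block.} For the factors $j+(y_1-j)t$ with $1\leq j\leq s_1$, the constant term $j$ is positive. The linear coefficient $y_1-j$ is nonnegative as soon as $y_1\geq s_1$. Since every $\ss\in\mathcal{I}_n$ is a weak composition of $n$, we have $s_1\leq n\leq y_1$.

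\emph{Remaining blocks.} For $i\geq 2$, the factors are $(j-1)+(y_i-j+1)t$ with $1\leq j\leq s_i$. The constant term $j-1\geq 0$. The linear coefficient $y_i-j+1\geq y_i-s_i+1$ is nonnegative provided $s_i\leq y_i+1$. To see that this holds, I will use the dominance condition $s_1\geq 1$ in the definition of $\mathcal{I}_n$. Since $n\geq 2$ whenever there is an index $i\geq 2$, this gives $s_i\leq n-s_1\leq n-1\leq y_i$, which is even stronger than needed. Empty products, when $s_i=0$, equal $1$ and pose no issue.

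There is no real obstacle here. The only point needing care is the bound $s_i\leq n-1$ for $i\geq 2$, which relies on $s_1\geq 1$ from the defining inequalities of $\mathcal{I}_n$ at $i=1$. This also explains why the hypothesis on $y_1$ is $y_1\geq n$ while for the other coordinates it is only $y_i\geq n-1$. One could note that the bound for $i\geq2$ is not sharp, since $y_i\geq n-2$ would already suffice there, but I will simply use the stated hypothesis.
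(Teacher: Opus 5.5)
Your proof is correct and matches the paper's: both check that every linear factor $j+(y_1-j)t$ and $(j-1)+(y_i-j+1)t$ has nonnegative coefficients, using bounds on the $s_i$. The only difference is that for $i\ge 2$ the paper uses the cruder bound $s_i\le n\le y_i+1$ and does not invoke $s_1\ge 1$, so the asymmetry between $y_1\ge n$ and $y_i\ge n-1$ comes from the shifted constant terms ($j$ versus $j-1$), not from the dominance condition as you suggest.
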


\begin{proof}
    Since the set $\mathcal{I}_n$ consists of compositions of $n$, we have the trivial inequalities $0\leq s_i \leq n$ for each $1\leq i\leq n$. In particular, if $y_1\geq n$ we have that $y_1\geq s_1$ and so $y_1-j\geq 0$ for each $1\leq j\leq s_1$. Similarly, for $2\leq i \leq n$, the condition $y_i\geq n-1$ implies that $y_i + 1 \geq n \geq s_i$ and so $y_i-j+1\geq 0$ for $1\leq j \leq s_i$. 
\end{proof}

An alternative way of proving the preceding proposition follows from the fact that the polynomials $p_{a,b,c}(t) = \binom{at+b}{c}$ are magic positive when $a\geq b\geq c-1$ (as was shown in Corollary~\ref{coro:magic-binomials}). This, combined with Corollary~\ref{coro:ehr-PS} gives the desired positivity. In fact, this idea can also be applied for some $\mathbf{y}$-generalized permutohedra. Note that the next restriction on the parameters $\yy$ is off by one from the case of $\Pi_n(\yy)$ from Proposition~\ref{prop: PS for large y}. This is because the way we defined these two polytopes, $\Pi_n(\yy)$ is full dimensional while $\mathcal{P}_H(\yy)$ has codimension one.

\begin{proposition}\label{prop:magic-y-perm-large}
    Let $\yy\in \mathbb{Z}^n_{> 0}$ be such that $y_1 \geq n - 1$ and $y_i \geq n-2$ for $i=2,\ldots,n$. Then, the Ehrhart polynomial of $\mathcal{P}_H(\yy)$ is magic positive.
\end{proposition}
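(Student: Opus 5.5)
Our approach is to follow the second argument given after Proposition~\ref{prop: PS for large y}: write each summand of Postnikov's formula as a product of binomials $p_{a,b,c}$, apply Corollary~\ref{coro:magic-binomials} to each factor, and then invoke Lemma~\ref{lemma:magic-multiplicative}. Here $\yy$ has $m$ entries and $H\subseteq K_{m,n}$. Throughout we read the hypothesis as $y_1\geq n-1$ and $y_i\geq n-2$ for every $i$. By Theorem~\ref{thm:post ehrhart Y-perm},
\[
\ehr_{\yPerm_H(\yy)}(t) = \sum_{\aaa\in\Draco(H)} \binom{t y_1 + a_1}{a_1}\prod_{i\geq 2}\binom{t y_i + a_i - 1}{a_i}.
\]
Since $\sum_i a_i = n-1$ for every Draconian sequence, each summand is a polynomial of degree exactly $n-1$, provided $y_i>0$ whenever $a_i>0$. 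This degree equals $\dim \yPerm_H(\yy)$, so $\mathscr{M}=\mathscr{M}_{n-1}$ acts linearly on the sum. It therefore suffices to show that $\mathscr{M}$ of each summand has nonnegative coefficients.

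Next we use Lemma~\ref{lemma:magic-multiplicative}. Each summand is a product of factors of the form $p_{a,b,c}(t)=\binom{at+b}{c}$, whose degrees add up to $n-1$. Hence $\mathscr{M}$ of the summand is the product of the $\mathscr{M}(p_{a,b,c})$, and it is enough to check that each factor is magic positive.

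The factors are of two kinds.
\begin{itemize}
\item For the first factor, $(a,b,c)=(y_1,a_1,a_1)$. Corollary~\ref{coro:magic-binomials} requires $y_1\geq a_1\geq a_1-1$, and this holds because $a_1\leq n-1\leq y_1$.
\item For $i\geq 2$, $(a,b,c)=(y_i,a_i-1,a_i)$. The corollary requires $y_i\geq a_i-1$, and this holds because $a_i-1\leq n-2\leq y_i$.
\end{itemize}
In both cases the condition $b\geq c-1$ holds trivially.

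One degenerate case needs care. When $a_i=0$ the factor is the constant $1$, which is harmless. When $b=-1$, which happens only if $a_i=0$, the corollary's assumption $b\in\mathbb{Z}_{\geq 0}$ fails, so we treat that factor separately; again it equals $1$.

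The main point to verify is the degree bookkeeping: $\mathscr{M}$ is only additive on summands of the same degree, and Lemma~\ref{lemma:magic-multiplicative} needs the degree subscripts to add up. Both conditions follow from $\sum a_i=n-1$ together with $y_i>0$. This completes the plan.
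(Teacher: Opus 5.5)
Your proposal is correct and follows the same route as the paper's proof. Both use Postnikov's formula with summands of equal degree $n-1$, the multiplicativity of $\mathscr{M}$, and Corollary~\ref{coro:magic-binomials} applied to the factors $p_{y_1,a_1,a_1}$ and $p_{y_i,a_i-1,a_i}$, with $a_i\le n-1$ giving the needed inequalities. Your explicit handling of the degenerate factors when $a_i=0$ (where $b=-1$) and of the degree bookkeeping tightens what the paper leaves implicit.
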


\begin{proof}
    We can use the formula in Theorem~\ref{thm:post ehrhart Y-perm}. All the summands have the same degree, so it suffices to show that each summand is magic positive. Using the notation $p_{a,b,c}(t) := \binom{at+b}{c}$, the factors in each summand are $p_{y_1,a_1,a_1}(t)$ and $p_{y_i,a_i-1,a_i}(t)$ for $2\leq i\leq n$. By Corollary~\ref{coro:magic-binomials}, the conditions $y_1 \geq a_1$ and $y_i \geq a_i-1$ for $i=2,\ldots,n$ are enough to conclude that each single factor is magic positive. This follows readily from the assumption that $y_1 \geq n-1$ and $y_i\geq n-2$ for $i=2,\ldots,n$.
\end{proof}

There is no hope of extending the preceding theorem to all $\yy$-generalized permutohedra, as they fail to have magic positive Ehrhart polynomials in general. See Example~\ref{ex:y perm not magic positive}.

\begin{guide*}{}{}\label{guide}
The overall strategy to prove Theorem~\ref{thm:main-intro} is:
\begin{enumerate}[Step (i)]
    \item We show that for $\yy\in \mathbb{Z}^n_{> 0}$ such that $y_1 \geq n - 1$ and $y_i \geq n-2$ for $2\leq i \leq n$, there is a combinatorial interpretation for the coefficients of the polynomial $\mathscr{M}(\ehr_{\mathcal{P}_H(\yy)})$.
    \item We specialize the interpretation of Step (i) to the polynomials $\mathscr{M}(\ehr_{\Pi_n(\yy)})$ for $\yy\in\mathbb{Z}^n_{>0}$ such that $y_1\geq n$ and $y_i\geq n-1$ for $2\leq i \leq n$.
    \item We show that the combinatorial interpretation for $\mathscr{M}(\ehr_{\Pi_n(\yy)})$ in Step (ii) continues to hold, after some adjustments, for all $\yy\in \mathbb{Z}^n_{>0}$. 
\end{enumerate}
\end{guide*}

\subsection{Algebraic contributions}

Throughout this section we will always assume that $\ss, \yy \in \Z_{\geq 0}^{n}$ are vectors of non-negative integers, and that $r:=\sum_{i=1}^n s_i$ (very often we will find ourselves in the situation that $r=n$, but this is not strictly required). For each integer $0\leq l\leq n$ we define the polynomial:
\begin{equation}\label{eq. polynomial of algebraic lucky in}
    {f}_{\ss,l}(t,\yy) := \binom{r}{s_1,\dots,s_n}\prod_{i=1}^{n}\prod_{j=1}^{s_i}\left((y_{i} - (j-1 + \delta_{i,l}))t + (j-1 + \delta_{i,l})\right).
\end{equation}

In the above display (and throughout the remainder of this paper) the notation $\delta_{i,j}$ stands for Kronecker's delta, i.e., $\delta_{i,j} = 0$ for $i\neq j$ and $\delta_{i,i} = 1$. Note that the cases $l=0$ and $l=1$ of the above polynomial are especially relevant:
\begin{align*}
    {f}_{\ss,0}(t,\yy) &= \binom{r}{s_1,\dots,s_n}\prod_{i=1}^n \prod_{j=1}^{s_i} (j-1 + (y_i-j+1)t),\\
    {f}_{\ss,1}(t,\yy) &= \binom{r}{s_1,\dots,s_n}\prod_{j=1}^{s_1} (j + (y_1-j)t) \prod_{i=2}^n \prod_{j=1}^{s_i} (j-1 + (y_i-j+1)t),
\end{align*}

By combining equations~\eqref{eq:magic-ehr-ps} and~\eqref{eq. polynomial of algebraic lucky in} we can write:
\begin{equation} \label{eq:refined}
{f}_{\Pf_n(\yy)}(t) = \sum_{\mathbf{s} \in \mathcal{I}_{n}} {f}_{\ss,1}(t,\yy).
\end{equation}

The next is a very technical but crucial definition in this article.

\begin{definition}[Algebraic contribution]\label{def. algebraic contributions}
    Let us fix $l=0,1,\ldots,n$ and let $\pi \in \mulPerm{\ss}$ be a multipermutation.
    For each $1\leq j\leq r$, we define $\mu(\pi)_{j}$, called the \emph{$j$-th partial multiplicity} of $\pi$ as the number:
    \begin{equation}\label{eq. partial mutiplicity}
        \mu(\pi)_{j} := \#\{ k \leq j : \pi_k = \pi_j \}.
    \end{equation}
    Let us also fix the notation:
    \begin{equation}\label{eq. occupied spaces}
        o_{j,l}(\pi) := \mu(\pi)_{j} - 1 +\delta_{\pi_j,l}.
    \end{equation}
    Fix $L\subseteq [r]$. The \textit{algebraic contributions of type $\alpha$ and $\beta$} of the quadruple $(\yy,l,\pi,L)$ are defined, respectively, by: 
    \begin{align*}
        \alg_{l}^{(\alpha)}(\yy;\pi,L) &:= \prod_{j \in L}y_{\pi_j} \cdot \prod_{j \in ([r]\setminus L)}o_{j,l}(\pi).\\
        \alg_{l}^{(\beta)}(\yy;\pi,L) &:= \prod_{j \in L}( y_{\pi_j} - o_{j,l}(\pi)) \cdot \prod_{j \in ([r]\setminus L)}o_{j,l}(\pi).
    \end{align*}
\end{definition}

The names ``type $\alpha$'' and ``type $\beta$'' resemble the classical notation used to denote the flag $f$-vector and the flag $h$-vector of a graded poset (see \cite[Section~3.13]{stanley-ec1}). In particular, as the following lemma illustrates, the relationship between these two types of contributions parallels the linear relationship between the flag $f$ and the flag $h$-vector of such posets.

\begin{lemma}\label{lemma. relation into beta and alpha alg}
     The algebraic contributions of types $\alpha$ and $\beta$ of the quadruple $(\yy,l,\pi,L)$ are related via the following:
     \begin{enumerate}[\normalfont (i)]
         \item $\alg_{l}^{(\alpha)}(\yy;\pi,L) = \displaystyle\sum_{S \subseteq L}\alg_{l}^{(\beta)}(\yy;\pi,S)$ \\ 
         \item $\alg_{l}^{(\beta)}(\yy;\pi,L) = \displaystyle\sum_{S \subseteq L}(-1)^{|L| - |S|}\alg_{l}^{(\alpha)}(\yy;\pi,S)$
     \end{enumerate}
\end{lemma}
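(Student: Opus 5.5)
The plan is to treat both identities as instances of the binomial expansion of a product, i.e.\ Möbius inversion on the Boolean lattice of subsets of $L$. For brevity write $y_j := y_{\pi_j}$ and $o_j := o_{j,l}(\pi)$ for $j\in[r]$. These are fixed numbers determined by $(\yy,l,\pi)$; the combinatorial meaning of the partial multiplicities plays no role here.

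For part (i), I will split each factor of type $\alpha$ as $y_j = (y_j - o_j) + o_j$ for $j\in L$, and expand the product over $L$. This gives
\[
\prod_{j\in L} y_j = \sum_{S\subseteq L}\prod_{j\in S}(y_j-o_j)\prod_{j\in L\setminus S} o_j .
\]
Next I multiply both sides by $\prod_{j\in[r]\setminus L} o_j$. In each summand the factors $o_j$ for $j\in L\setminus S$ merge with those for $j\in[r]\setminus L$, giving exactly $\prod_{j\in[r]\setminus S} o_j$. Hence each summand is $\alg^{(\beta)}_l(\yy;\pi,S)$, which proves (i).

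For part (ii), there are two routes. One is to apply Möbius inversion on the Boolean lattice $2^L$ to (i). Identity (i) holds for every $L\subseteq[r]$, in particular for every subset of a fixed $L$, so setting $F(L)=\alg^{(\alpha)}$ and $G(S)=\alg^{(\beta)}$, inclusion–exclusion gives $G(L)=\sum_{S\subseteq L}(-1)^{|L|-|S|}F(S)$. The other route is direct: write $y_j-o_j = y_j + (-o_j)$ for $j\in L$ and expand the product as in (i). The summand indexed by $S\subseteq L$ becomes
\[
\prod_{j\in S} y_j\,(-1)^{|L|-|S|}\prod_{j\in L\setminus S}o_j\prod_{j\notin L}o_j = (-1)^{|L|-|S|}\,\alg^{(\alpha)}_l(\yy;\pi,S).
\]

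The argument has no real obstacle. The one point needing care is the bookkeeping of the complementary products, namely that $(L\setminus S)\cup([r]\setminus L)=[r]\setminus S$ as a disjoint union. No hypothesis on the signs of the $o_j$ or of $y_j-o_j$ is needed, since both identities are purely algebraic.
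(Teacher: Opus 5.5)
Your proof is correct and takes the same approach as the paper. In (i) you split $y_{\pi_j} = (y_{\pi_j}-o_{j,l}(\pi)) + o_{j,l}(\pi)$ for $j\in L$, expand the product over $L$, and merge the leftover factors with $\prod_{j\in[r]\setminus L} o_{j,l}(\pi)$; (ii) then follows by inclusion--exclusion on the subsets of $L$. Your alternative direct expansion of $\prod_{j\in L}\bigl(y_{\pi_j} + (-o_{j,l}(\pi))\bigr)$ for (ii) is also valid.
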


\begin{proof}
   Identity (ii) follows from (i) by the inclusion–exclusion principle. To prove (i), observe that
    \begin{align*}
        \sum_{S \subseteq L}\alg_{l}^{(\beta)}(\yy;\pi,S) &= \prod_{j \in ([r] \setminus L)}o_{j,l}(\pi) \cdot  \sum_{S \subseteq L} \left(\prod_{j \in S}(y_{\pi_j} - o_{j,l}(\pi)) \cdot \prod_{j \in (L\setminus S)}o_{j,l}(\pi)\right)
    \end{align*}
    Expanding the inner sum, we obtain
    \begin{align*}
        \sum_{S \subseteq L}\alg_{l}^{(\beta)}(\yy;\pi,S) &= \prod_{j \in ([r] \setminus L)}o_{j,l}(\pi) \cdot \prod_{j \in L}(y_{\pi_j} - o_{j,l}(\pi) + o_{j,l}(\pi)) \\
        &= \prod_{j \in ([r] \setminus L)}o_{j,l}(\pi) \cdot \prod_{j \in L} y_{\pi_j} = \alg_{l}^{(\alpha)}(\yy;\pi,L). \qedhere
    \end{align*}
\end{proof}

As we shall now demonstrate, the definition of algebraic contribution plays the role of a convenient way of refining the polynomial ${f}_{\ss,l}(t,\yy)$ by using multipermutations $\pi \in \mulPerm{\ss}$ and subsets $L \subseteq [r]$ where $r = \sum_{i=1}^{n}s_i$. Let us fix $\pi \in \mulPerm{\ss}$, and define the following polynomial: 
\begin{equation}
    {f}_{\pi,l}(t,\yy) := \sum_{L \subseteq [r]}\alg_{l}^{(\beta)}(\yy;\pi, L)t^{|L|}
\end{equation}

\begin{lemma}
The polynomial $f_{\pi,l}(t,\yy)$ defined above does not depend on the multipermutation $\pi$. Moreover, it is given by:
\begin{equation}\label{eq. produt f_pi_y not depends}
    f_{\pi,l}(t,\mathbf{y})
    = \prod_{i=1}^{n}\prod_{k=1}^{s_i}
    \Big((y_i-(k-1+\delta_{i,l}))t + (k-1+\delta_{i,l})\Big).
\end{equation}
\end{lemma}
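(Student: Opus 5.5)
The plan is to expand the product on the right-hand side directly and match it term by term with the definition of $f_{\pi,l}(t,\yy)$. Fix $\pi\in\mulPerm{\ss}$. By Definition~\ref{def. algebraic contributions}, for each position $j\in[r]$ we have the factor $o_{j,l}(\pi)$ and the factor $y_{\pi_j}-o_{j,l}(\pi)$. The sum over $L\subseteq[r]$ of $\alg_l^{(\beta)}(\yy;\pi,L)\,t^{|L|}$ is then precisely the expansion of a product over positions. So the first step is to write
\[
f_{\pi,l}(t,\yy)=\sum_{L\subseteq[r]}\prod_{j\in L}\bigl(y_{\pi_j}-o_{j,l}(\pi)\bigr)t\prod_{j\notin L}o_{j,l}(\pi)=\prod_{j=1}^{r}\Bigl(\bigl(y_{\pi_j}-o_{j,l}(\pi)\bigr)t+o_{j,l}(\pi)\Bigr).
\]
This is the same distributivity computation as in the proof of Lemma~\ref{lemma. relation into beta and alpha alg}(i), now carried out with $L=[r]$ and a formal weight $t$ attached to each chosen position.

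The second step is to regroup the factors according to the value $i=\pi_j$. For fixed $i\in[n]$, the positions $j$ with $\pi_j=i$ are exactly $s_i$ in number, since $\pi$ is a multipermutation of $M_{\ss}$. As $j$ runs through these positions from left to right, the partial multiplicity $\mu(\pi)_j$ takes the values $1,2,\dots,s_i$, each exactly once. Hence $o_{j,l}(\pi)=\mu(\pi)_j-1+\delta_{i,l}$ runs through $k-1+\delta_{i,l}$ for $k=1,\dots,s_i$. Substituting this into the product gives
\[
\prod_{i=1}^n\prod_{k=1}^{s_i}\Bigl(\bigl(y_i-(k-1+\delta_{i,l})\bigr)t+(k-1+\delta_{i,l})\Bigr).
\]
This expression visibly involves only $\ss$, $\yy$ and $l$, so it does not depend on $\pi$. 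That settles both claims at once.

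No step here is a real obstacle. The only point needing care is the bijection between the positions carrying symbol $i$ and the values $k\in[s_i]$ via $\mu(\pi)_j$, which is immediate from the definition of partial multiplicity. Multiplying by $\binom{r}{s_1,\dots,s_n}=|\mulPerm{\ss}|$ then recovers $f_{\ss,l}(t,\yy)=\sum_{\pi\in\mulPerm{\ss}}f_{\pi,l}(t,\yy)$, which is presumably the intended use of the lemma.
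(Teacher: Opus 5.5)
Your proposal is correct and follows the paper's proof. Like the paper, you first expand the subset sum by distributivity into $\prod_{j=1}^{r}\bigl((y_{\pi_j}-o_{j,l}(\pi))t+o_{j,l}(\pi)\bigr)$, then regroup the factors by symbol, using that $\mu(\pi)_j$ runs through $1,\dots,s_i$ on the occurrences of $i$; the paper phrases this regrouping through the index set $\mathcal{T}_{\ss}$ of pairs $(i,k)$, but the argument is the same.
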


\begin{proof}
Expanding the sum and by the definition of the product of $\alg_{l}^{(\beta)}(\yy;\pi, L)$, we have that
\begin{equation}\label{eq. produt f_pi_y}
    {f}_{\pi,l}(t,\yy) = \prod_{j = 1}^{r}\Big((y_{\pi_j} - o_{j,l}(\pi))t + o_{j,l}(\pi)\Big)
\end{equation}

Now let us consider the following set of tuples. Let $\ss = (s_1,\dots,s_n) \in \Z_{\geq 0}^{n}$, then we denote the set 
$$\mathcal{T}_{\ss}: = \{(i,k) : 1 \leq i \leq n \text{ and } 1 \leq k \leq s_i\}.$$

Note that the map $\psi_{\ss} :\mathcal{T}_{\ss} \longrightarrow [r]$, where $\psi_{\ss}(i,k) = \sum_{j=1}^{i-1}s_j + k$, is a bijection between the elements of $\mathcal{T}_{\ss}$ and the index of $[r]$, so, for all $\pi \in \mulPerm{\ss}$.

Recall that for a multipermutation $\pi \in \mulPerm{\mathbf{s}}$, the quantity $o_{j,l}(\pi) := \mu(\pi)_{j} - 1 +\delta_{\pi_j,l}$, depends only on two pieces of information: the symbol $i=\pi_j$ appearing at position $j$, and the partial multiplicity $\mu(\pi)_{j}$ which records how many times the symbol $i$ has appeared up to position $j$. For a fixed symbol $i$, the values of $\mu(\pi)_j$ as $j$ ranges over the positions where $\pi_j=i$ are exactly the integers $1,2,\dots,s_i$, in some order depending on $\pi$. Thus, for each occurrence of $i$, there is a unique index $k\in\{1,\dots,s_i\}$ such that $\mu(\pi)_j = k$. Substituting this into the definition of $o_{j,l}(\pi)$, we obtain  
\(o_{j,l}(\pi)=k-1+\delta_{i,l}.\)
 This shows that the factor  
\(\big((y_{\pi_j}-o_{j,l}(\pi))t + o_{j,l}(\pi)\big)\)
depends only on the pair $(i,k)$ associated to the $k$-th occurrence of the symbol $i$, and not on the specific position $j$ of the occurrence in the multipermutation.

Since multiplication is commutative, the order in which these factors appear is irrelevant. Therefore, the product over positions $j=1,\dots,r$ can be rewritten as a product over all pairs $(i,k)\in\mathcal{T}_{\mathbf{s}}$:
\begin{align*}
    f_{\pi,l}(t,\mathbf{y})
    &= \prod_{j=1}^{r}\Big((y_{\pi_j}-o_{j,l}(\pi))t + o_{j,l}(\pi)\Big) \\
    &= \prod_{i=1}^{n}\prod_{k=1}^{s_i}
    \Big((y_i-(k-1+\delta_{i,l}))t + (k-1+\delta_{i,l})\Big),
    \end{align*}
which is of course manifestly independent of the choice of multipermutation $\pi$.
\end{proof}

\begin{proposition}\label{prop. refine of f_s,y by pi and L}
     Let $\ss = (s_1,\dots,s_n)$ and $\yy = (y_1,\dots,y_n)$ be vectors of non-negative integers, the polynomial ${f}_{\ss,l}(t,\yy)$ can be refined in terms of subsets $L \subseteq [r]$ and multipermutations $\pi \in \mulPerm{\ss}$,
     \[{f}_{\ss,l}(t,\yy) = \sum_{L \subseteq [r]}\sum_{\pi \in \mulPerm{\ss}}\alg_{l}^{(\beta)}(\yy;\pi , L)t^{|L|}.\]
\end{proposition}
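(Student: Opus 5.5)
The identity follows by swapping the two finite sums and invoking the preceding lemma, which shows that $f_{\pi,l}(t,\yy)$ is independent of $\pi$.

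Since both sums on the right-hand side are finite, we interchange them:
\[
\sum_{L \subseteq [r]}\sum_{\pi \in \mulPerm{\ss}}\alg_{l}^{(\beta)}(\yy;\pi , L)\,t^{|L|}
= \sum_{\pi \in \mulPerm{\ss}} \sum_{L\subseteq[r]} \alg_{l}^{(\beta)}(\yy;\pi , L)\,t^{|L|}
= \sum_{\pi \in \mulPerm{\ss}} f_{\pi,l}(t,\yy).
\]
The last equality is just the definition of $f_{\pi,l}(t,\yy)$.

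By the preceding lemma, each summand $f_{\pi,l}(t,\yy)$ equals the product in \eqref{eq. produt f_pi_y not depends}, whatever $\pi$ is. The sum over $\pi$ therefore collapses to $|\mulPerm{\ss}|$ times that single product. By \eqref{Eq. total_multpermutation}, this count is $|\mulPerm{\ss}|=\binom{r}{s_1,\dots,s_n}$.

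Comparing the result with the defining formula \eqref{eq. polynomial of algebraic lucky in} for $f_{\ss,l}(t,\yy)$ gives exactly the claimed identity.

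There is no real obstacle here. The one point to check is that the product in \eqref{eq. produt f_pi_y not depends} matches the factor in \eqref{eq. polynomial of algebraic lucky in} term by term, with $k$ in the former playing the role of $j$ in the latter. The substantive work, namely the expansion of the product over positions into a sum over subsets $L$ and the reindexing by the pairs $(i,k)\in\mathcal{T}_{\ss}$, was already carried out in the preceding lemma.
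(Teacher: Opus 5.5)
Your proposal is correct and follows essentially the same route as the paper: interchange the finite sums, use the preceding lemma's $\pi$-independent product formula for $f_{\pi,l}(t,\yy)$, and multiply by $|\mulPerm{\ss}|=\binom{r}{s_1,\dots,s_n}$ to recover the definition of $f_{\ss,l}(t,\yy)$. You spell out the sum interchange explicitly, which the paper leaves implicit.
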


\begin{proof}
    The result follows directly from equation~\eqref{eq. produt f_pi_y not depends}. Since it does not depend on $\pi$, summing over all the multipermutations in $\mulPerm{\ss}$, we obtain
    \begin{equation*}
        \sum_{\pi \in \mulPerm{\ss}}{f}_{\pi,l}(t,\yy) = \binom{r}{s_1,\dots,s_n}\prod_{i=1}^{n}\prod_{k = 1}^{s_i}\Big((y_{i} - (k-1+\delta_{i,l}))t + (k-1+\delta_{i,l})\Big),
    \end{equation*}
    which is ${f}_{\ss,l}(t,\yy)$ by definition.
\end{proof}

\subsection{Block parking protocol}\label{subsec:block-parking-protocol}

The goal in this subsection is to complete the first two steps of the overall strategy described in the guide presented earlier in this section. Recall that a \emph{parking function} of order $n$ is a word $p=(p_1,\dots,p_n)$ of integers $p_1,\ldots,p_n \in [n]$, satisfying that, for each $j\in [n]$, there are at least $j$ values that are at most $j$. This notion is often visualized as follows: there are $n$ cars and $n$ parking spaces numbered from $1$ to $n$; the $i$-th car has a preferred parking space $p_i$. The cars enter the parking lot in order, and the $i$-th car goes directly to the space in position $p_i$. If it finds it free then it parks there, and otherwise it continues driving until the next available space. A parking function corresponds to a list of preferred spaces that allows all the cars to park.

Inspired by this ``parking protocol'' (i.e., this list of rules the cars follow in order to park), we now introduce a block version that interacts with the structure of $\yy$-extended permutations. 
Given a vector $\yy=(y_1,\dots,y_n)$ of non-negative integers, recall the associated intervals
\begin{equation}\label{eq:Y-interval}
Y_i = \left[\sum_{k=1}^{i-1}y_k + 1, \sum_{k=1}^{i}y_k\right], \qquad i \in [n].
\end{equation}

\begin{protocol*}{Block Parking Protocol}{}
Fix $r, n\geq 1$ and let $\yy\in \mathbb{Z}_{\geq 0}^n$. There are $r$ cars that desire to park, and there are $m := \sum_{i=1}^n y_i$ parking spaces. Moreover, the parking spaces are divided into $n$ circles, each containing (in cyclic order) the parking spaces labelled by the intervals $Y_1,\ldots, Y_n$ appearing in equation~\eqref{eq:Y-interval}.

Let us fix $w = (w_1,\dots,w_r) \in [m]^r$ a list of preferences for the $r$ cars, and let $l\in\{0,1,\dots,n\}$ be a fixed parameter. If $l>0$, the first position of block $Y_l$ is declared permanently unavailable; if $l=0$, all spaces are available.

Cars attempt to park, in order, according to the following rules.  
If $w_j\in Y_i$, the $j$-th car tries to park at $w_j$. If that space is unavailable or already occupied, the car advances cyclically within $Y_i$, until it finds an available space; if it cannot park, then it leaves.

We denote by $\out_{\circlearrowright,l}(\yy;w)=(b_1,\dots,b_r)$ the output configuration, where $b_j$ is the space where the $j$-th car parks, and $b_j=\varnothing$ if the car fails to park.
\end{protocol*}
We illustrate the block parking protocol through two basic examples.
\begin{example}
    Let $n = 3$ and $r = 14$. Consider $\yy = (3, 9, 6)$, so that $m = 3+9+6 = 18$. Fix $l = 0$, so that all spaces are indeed available. Let us fix the list of preferences in $w\in [18]^{14}$ given by 
    $w = (5,17,17,3,8,18,3,1,8,13,11,2,7,11)$. In Figure \ref{Fig. example of parking blocks} we illustrate the outcome of the block parking protocol for these $14$ cars.
    
    As a result we obtain $\out_{\circlearrowright,0}(\yy;w) = (5, 17, 18, 3, 8, 13, 1, 2, 9, 14, 11,\varnothing, 7, 12)$,  where the $12$-th car fails to park, since by its turn all the spaces in block $Y_1 = \{1,2,3\}$ are already occupied. 
    \begin{figure}[h]
        \begin{center}
        \captionsetup{justification=centering}
        \resizebox{.8\linewidth}{!}{\begin{tikzpicture}

    \draw [color = white, fill= color_gris] (0.00,0.00) ellipse (2.35 and 2.35);
    \draw [color = white, fill= white] (0.00,0.00) ellipse (1.70 and 1.70);
    \spotCircular{0.00}{0.00}{1.82}{0.4}{435.00}{465.00}{$1$}
    \spotCircular{0.00}{0.00}{1.82}{0.4}{315.00}{345.00}{$2$}
    \spotCircular{0.00}{0.00}{1.82}{0.4}{195.00}{225.00}{$3$}
    \draw [color = black, dotted,  very thick] (0.00,0.00) ellipse (0.70 and 0.70);
    \node at (0.00,0.00) {$Y_1$};
    \draw [color = white, fill= color_gris] (6.50,0.00) ellipse (2.35 and 2.35);
    \draw [color = white, fill= white] (6.50,0.00) ellipse (1.70 and 1.70);
    \spotCircular{6.50}{0.00}{1.82}{0.4}{435.00}{465.00}{$4$}
    \spotCircular{6.50}{0.00}{1.82}{0.4}{395.00}{425.00}{$5$}
    \spotCircular{6.50}{0.00}{1.82}{0.4}{355.00}{385.00}{$6$}
    \spotCircular{6.50}{0.00}{1.82}{0.4}{315.00}{345.00}{$7$}
    \spotCircular{6.50}{0.00}{1.82}{0.4}{275.00}{305.00}{$8$}
    \spotCircular{6.50}{0.00}{1.82}{0.4}{235.00}{265.00}{$9$}
    \spotCircular{6.50}{0.00}{1.82}{0.4}{195.00}{225.00}{$10$}
    \spotCircular{6.50}{0.00}{1.82}{0.4}{155.00}{185.00}{$11$}
    \spotCircular{6.50}{0.00}{1.82}{0.4}{115.00}{145.00}{$12$}
    \draw [color = black, dotted,  very thick] (6.50,0.00) ellipse (0.70 and 0.70);
    \node at (6.50,0.00) {$Y_2$};
    \draw [color = white, fill= color_gris] (13.00,0.00) ellipse (2.35 and 2.35);
    \draw [color = white, fill= white] (13.00,0.00) ellipse (1.70 and 1.70);
    \spotCircular{13.00}{0.00}{1.82}{0.4}{435.00}{465.00}{$13$}
    \spotCircular{13.00}{0.00}{1.82}{0.4}{375.00}{405.00}{$14$}
    \spotCircular{13.00}{0.00}{1.82}{0.4}{315.00}{345.00}{$15$}
    \spotCircular{13.00}{0.00}{1.82}{0.4}{255.00}{285.00}{$16$}
    \spotCircular{13.00}{0.00}{1.82}{0.4}{195.00}{225.00}{$17$}
    \spotCircular{13.00}{0.00}{1.82}{0.4}{135.00}{165.00}{$18$}
    \draw [color = black, dotted,  very thick] (13.00,0.00) ellipse (0.70 and 0.70);
    \node at (13.00,0.00) {$Y_3$};
    \carroTikZ{color_car_luky}{8.24}{2.07}{0.45}{320.00}{$1$}
    \preferenciaCircular{6.50}{0.00}{0.92}{0.6}{395.00}{425.00}{$5$}
    \carroTikZ{color_car_luky}{10.66}{-1.35}{0.45}{120.00}{$2$}
    \preferenciaCircular{13.00}{0.00}{0.92}{0.6}{195.00}{225.00}{$17$}
    \carroTikZ{color_car}{10.66}{1.35}{0.45}{60.00}{$3$}
    \preferenciaCircular{13.00}{0.00}{0.92}{0.6}{135.00}{165.00}{$17$}
    \carroTikZ{color_car_luky}{-2.34}{-1.35}{0.45}{120.00}{$4$}
    \preferenciaCircular{0.00}{0.00}{0.92}{0.6}{195.00}{225.00}{$3$}
    \carroTikZ{color_car_luky}{7.42}{-2.54}{0.45}{200.00}{$5$}
    \preferenciaCircular{6.50}{0.00}{0.92}{0.6}{275.00}{305.00}{$8$}
    \carroTikZ{color_car}{13.00}{2.70}{0.45}{360.00}{$6$}
    \preferenciaCircular{13.00}{0.00}{0.92}{0.6}{435.00}{465.00}{$18$}
    \carroTikZ{color_car}{0.00}{2.70}{0.45}{360.00}{$7$}
    \preferenciaCircular{0.00}{0.00}{0.92}{0.6}{435.00}{465.00}{$3$}
    \carroTikZ{color_car}{2.34}{-1.35}{0.45}{240.00}{$8$}
    \preferenciaCircular{0.00}{0.00}{0.92}{0.6}{315.00}{345.00}{$1$}
    \carroTikZ{color_car}{5.58}{-2.54}{0.45}{160.00}{$9$}
    \preferenciaCircular{6.50}{0.00}{0.92}{0.6}{235.00}{265.00}{$8$}
    \carroTikZ{color_car}{15.34}{1.35}{0.45}{300.00}{$10$}
    \preferenciaCircular{13.00}{0.00}{0.92}{0.6}{375.00}{405.00}{$13$}
    \carroTikZ{color_car_luky}{3.84}{0.47}{0.45}{80.00}{$11$}
    \preferenciaCircular{6.50}{0.00}{0.92}{0.6}{155.00}{185.00}{$11$}
    \carroTikZ{color_car_luky}{8.84}{-1.35}{0.45}{240.00}{$13$}
    \preferenciaCircular{6.50}{0.00}{0.92}{0.6}{315.00}{345.00}{$7$}
    \carroTikZ{color_car}{4.76}{2.07}{0.45}{40.00}{$14$}
    \preferenciaCircular{6.50}{0.00}{0.92}{0.6}{115.00}{145.00}{$11$}

    \carroTikZ{color_car}{0}{-4 + 0.5}{0.45}{0.00}{$12$}
    \draw[color = white, fill= color_gris, rounded corners=3pt] (-0.7,-4.35 + 0.5) rectangle (0.7,-5.15 + 0.5);
    \draw[fill=black, rounded corners=3pt] (-0.5,-4.5 + 0.5) rectangle (0.5,-5 + 0.5);
    \node[text = white] at (0,-4.75 + 0.5) {$\varnothing$};
    \draw[color = white, fill=color_petroleo, rounded corners=3pt] (-0.5,-4.5 - 0.25) rectangle (0.5,-5 - 0.25);
    \node[text = white] at (0,-4.75 - 0.25) {$2$};
\end{tikzpicture}}
        \end{center}
    	\caption{Example of the block parking protocol: the number on the car is its index, the number inside the black box is the parking space, and the number of the purple box is its initial preference.}
        \label{Fig. example of parking blocks}
    \end{figure}
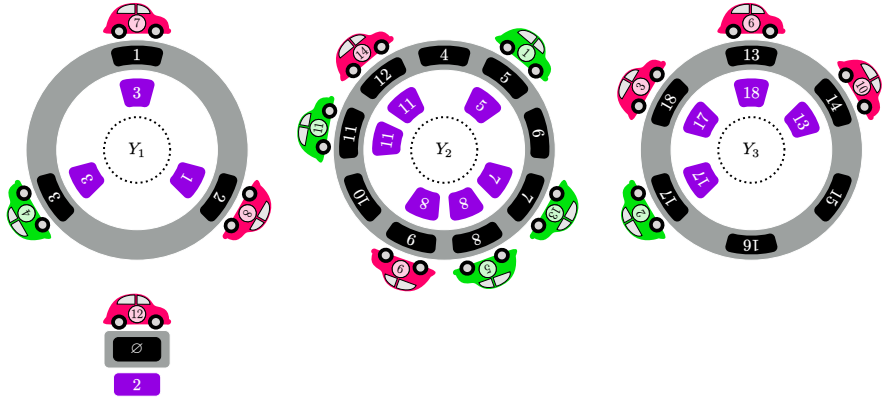
\end{example}

\begin{example}
Let $n = 3$, the number of circles, and $r = 15$ the number of cars. Consider $\yy = (4,9,5)$, so that there are $m = 4+9+5$ parking spaces. Let $l = 1$, so that the first space of the first circle is unavailable. Consider the preference word
$w = (11,3,2,10,18,2,1,16,10,14,6,7,4,6,3)\in [18]^{15}$. Figure \ref{Fig. example of parking blocks 2} illustrates the outcome of the block parking protocol for these $15$ cars; the result is 
$$\out_{\circlearrowright,1}(\yy;w) = (11, 3, 2, 10, 18, 4,\varnothing, 16, 12, 14, 6, 7,\varnothing, 8,\varnothing)$$
In this example, block $Y_1$ gets full quickly, precisely because one of its spaces is unavailable.

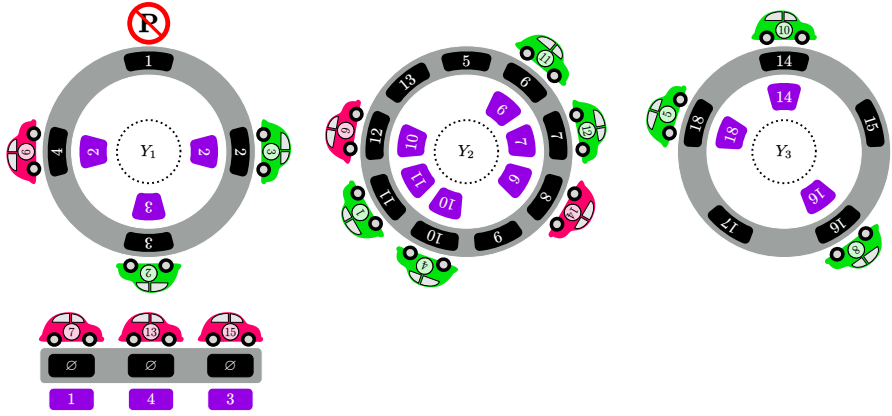
\begin{figure}[!htb]
    \centering
    \captionsetup{justification=centering}
    \resizebox{.8\linewidth}{!}{\begin{tikzpicture}

\draw [color = white, fill= color_gris] (0.00,0.00) ellipse (2.35 and 2.35);
\draw [color = white, fill= white] (0.00,0.00) ellipse (1.70 and 1.70);
\spotCircular{0.00}{0.00}{1.82}{0.4}{435.00}{465.00}{$1$}
\spotCircular{0.00}{0.00}{1.82}{0.4}{345.00}{375.00}{$2$}
\spotCircular{0.00}{0.00}{1.82}{0.4}{255.00}{285.00}{$3$}
\spotCircular{0.00}{0.00}{1.82}{0.4}{165.00}{195.00}{$4$}
\draw [color = black, dotted,  very thick] (0.00,0.00) ellipse (0.70 and 0.70);
\node at (0.00,0.00) {$Y_1$};
\draw [color = white, fill= color_gris] (7.00,0.00) ellipse (2.35 and 2.35);
\draw [color = white, fill= white] (7.00,0.00) ellipse (1.70 and 1.70);
\spotCircular{7.00}{0.00}{1.82}{0.4}{435.00}{465.00}{$5$}
\spotCircular{7.00}{0.00}{1.82}{0.4}{395.00}{425.00}{$6$}
\spotCircular{7.00}{0.00}{1.82}{0.4}{355.00}{385.00}{$7$}
\spotCircular{7.00}{0.00}{1.82}{0.4}{315.00}{345.00}{$8$}
\spotCircular{7.00}{0.00}{1.82}{0.4}{275.00}{305.00}{$9$}
\spotCircular{7.00}{0.00}{1.82}{0.4}{235.00}{265.00}{$10$}
\spotCircular{7.00}{0.00}{1.82}{0.4}{195.00}{225.00}{$11$}
\spotCircular{7.00}{0.00}{1.82}{0.4}{155.00}{185.00}{$12$}
\spotCircular{7.00}{0.00}{1.82}{0.4}{115.00}{145.00}{$13$}
\draw [color = black, dotted,  very thick] (7.00,0.00) ellipse (0.70 and 0.70);
\node at (7.00,0.00) {$Y_2$};
\draw [color = white, fill= color_gris] (14.00,0.00) ellipse (2.35 and 2.35);
\draw [color = white, fill= white] (14.00,0.00) ellipse (1.70 and 1.70);
\spotCircular{14.00}{0.00}{1.82}{0.4}{435.00}{465.00}{$14$}
\spotCircular{14.00}{0.00}{1.82}{0.4}{363.00}{393.00}{$15$}
\spotCircular{14.00}{0.00}{1.82}{0.4}{291.00}{321.00}{$16$}
\spotCircular{14.00}{0.00}{1.82}{0.4}{219.00}{249.00}{$17$}
\spotCircular{14.00}{0.00}{1.82}{0.4}{147.00}{177.00}{$18$}
\draw [color = black, dotted,  very thick] (14.00,0.00) ellipse (0.70 and 0.70);
\node at (14.00,0.00) {$Y_3$};
\carroTikZ{color_car_luky}{4.66}{-1.35}{0.45}{120.00}{$1$}
\preferenciaCircular{7.00}{0.00}{0.92}{0.6}{195.00}{225.00}{$11$}
\carroTikZ{color_car_luky}{-0.00}{-2.70}{0.45}{180.00}{$2$}
\preferenciaCircular{0.00}{0.00}{0.92}{0.6}{255.00}{285.00}{$3$}
\carroTikZ{color_car_luky}{2.70}{-0.00}{0.45}{270.00}{$3$}
\preferenciaCircular{0.00}{0.00}{0.92}{0.6}{345.00}{375.00}{$2$}
\carroTikZ{color_car_luky}{6.08}{-2.54}{0.45}{160.00}{$4$}
\preferenciaCircular{7.00}{0.00}{0.92}{0.6}{235.00}{265.00}{$10$}
\carroTikZ{color_car_luky}{11.43}{0.83}{0.45}{72.00}{$5$}
\preferenciaCircular{14.00}{0.00}{0.92}{0.6}{147.00}{177.00}{$18$}
\carroTikZ{color_car}{-2.70}{0.00}{0.45}{90.00}{$6$}
\preferenciaCircular{0.00}{0.00}{0.92}{0.6}{165.00}{195.00}{$2$}
\carroTikZ{color_car_luky}{15.59}{-2.18}{0.45}{216.00}{$8$}
\preferenciaCircular{14.00}{0.00}{0.92}{0.6}{291.00}{321.00}{$16$}
\carroTikZ{color_car}{4.34}{0.47}{0.45}{80.00}{$9$}
\preferenciaCircular{7.00}{0.00}{0.92}{0.6}{155.00}{185.00}{$10$}
\carroTikZ{color_car_luky}{14.00}{2.70}{0.45}{360.00}{$10$}
\preferenciaCircular{14.00}{0.00}{0.92}{0.6}{435.00}{465.00}{$14$}
\carroTikZ{color_car_luky}{8.74}{2.07}{0.45}{320.00}{$11$}
\preferenciaCircular{7.00}{0.00}{0.92}{0.6}{395.00}{425.00}{$6$}
\carroTikZ{color_car_luky}{9.66}{0.47}{0.45}{280.00}{$12$}
\preferenciaCircular{7.00}{0.00}{0.92}{0.6}{355.00}{385.00}{$7$}
\carroTikZ{color_car}{9.34}{-1.35}{0.45}{240.00}{$14$}
\preferenciaCircular{7.00}{0.00}{0.92}{0.6}{315.00}{345.00}{$6$}

\begin{scope}[shift={(0,2.85)}, scale=0.7 ]
  \draw[fill=none, draw=red, line width=3pt] (0,0) circle (0.6cm);
  \node[scale = 0.8] at (0,0) {\Huge \textbf{P}};
  \draw[red, line width=2pt] (-0.45,0.45) -- (0.45,-0.45);
\end{scope}

\begin{scope}[shift={(-1.7,0)}]
\draw[color = white, fill= color_gris, rounded corners=3pt] (-0.7,-4.35 + 0.5 - 0.5) rectangle (0.7 + 3.5,-5.15 + 0.5 - 0.5);

\carroTikZ{color_car}{0}{-4 + 0.5 - 0.5}{0.45}{0.00}{$7$}
\draw[fill=black, rounded corners=3pt] (-0.5,-4.5 + 0.5 - 0.5) rectangle (0.5,-5 + 0.5 - 0.5);
\node[text = white] at (0,-4.75 + 0.5 - 0.5) {$\varnothing$};
\draw[color = white, fill=color_petroleo, rounded corners=3pt] (-0.5,-4.5 - 0.25 - 0.5) rectangle (0.5,-5 - 0.25 - 0.5);
\node[text = white] at (0,-4.75 - 0.25 - 0.5) {$1$};

\carroTikZ{color_car}{0 + 1.75}{-4 + 0.5 - 0.5}{0.45}{0.00}{$13$}
\draw[fill=black, rounded corners=3pt] (-0.5 + 1.75,-4.5 + 0.5 - 0.5) rectangle (0.5 + 1.75,-5 + 0.5 - 0.5);
\node[text = white] at (0 + 1.75,-4.75 + 0.5 - 0.5) {$\varnothing$};
\draw[color = white, fill=color_petroleo, rounded corners=3pt] (-0.5 + 1.75,-4.5 - 0.25 - 0.5) rectangle (0.5 + 1.75,-5 - 0.25 - 0.5);
\node[text = white] at (0 + 1.75,-4.75 - 0.25 - 0.5) {$4$};

\carroTikZ{color_car}{0 + 3.5}{-4 + 0.5 - 0.5}{0.45}{0.00}{$15$}
\draw[fill=black, rounded corners=3pt] (-0.5 + 3.5,-4.5 + 0.5 - 0.5) rectangle (0.5 + 3.5,-5 + 0.5 - 0.5);
\node[text = white] at (0 + 3.5,-4.75 + 0.5 - 0.5) {$\varnothing$};
\draw[color = white, fill=color_petroleo, rounded corners=3pt] (-0.5 + 3.5,-4.5 - 0.25 - 0.5) rectangle (0.5 + 3.5,-5 - 0.25 - 0.5);
\node[text = white] at (0 + 3.5,-4.75 - 0.25 - 0.5) {$3$};
\end{scope}

\end{tikzpicture}}
    \caption{Example of block parking protocol with one unavailable space.}
    \label{Fig. example of parking blocks 2}
\end{figure}
\end{example}

Inspired by its counterpart notion in the classical parking function setting \cite{GesselSeo} (see also \cite{StanleyYin,harris2024parkingfunctionsfixedset,ferreri2025enumeratingvectorparkingfunctions, kenyonparkingfunctions}), we now introduce the concept of \emph{lucky cars} in the block parking protocol. A car is called lucky if it ends up occupying its initial preferred space, that is, if $\out_{\circlearrowright,l}(\yy;w)_j=w_j$. In both Figures \ref{Fig. example of parking blocks} and \ref{Fig. example of parking blocks 2}, the lucky cars can be identified as the green cars when the black rectangle (final space) and the purple rectangle (initial preference) are equal.

\begin{definition}
    With the above notation for the block parking protocol. The \emph{set of lucky cars} of $w$ is defined as
    \[\Lucky_{\circlearrowright,l}(\yy; w) := \Big\{ j \in [r] : \out_{\circlearrowright,l}(\yy;w)_j = w_j \Big\},\]
    and the \emph{number of lucky cars} of $w$ is denoted as $\lucky_{\circlearrowright,l}(\yy; w) := \#\Lucky_{\circlearrowright,l}(\yy; w)$.
\end{definition}

Recall that if we fix a multipermutation $\pi \in \mulPerm{\ss}$, we can extend it to a $\yy$-extended permutation $\sigma \in \ExtPerm{\ss}{\yy}$ such that $\sigma_j \in Y_{\pi_j}$. So, we can define the following quantities:

\begin{definition}
Let $\pi \in \mulPerm{\ss}$ and let $L \subseteq [r]$. We define:
\begin{align*}
\lucky^{(\alpha)}_{\circlearrowright,l}(\yy; \pi, L) &:= \#\{ \sigma \in \ExtPerm{\ss}{\yy} :  \pi \text{ is extended to } \sigma \text{ and } \Lucky_{\circlearrowright,l}(\yy; \sigma) \subseteq L \},\\
\lucky^{(\beta)}_{\circlearrowright,l}(\yy; \pi, L) &:= \#\{ \sigma \in \ExtPerm{\ss}{\yy} : \pi \text{ is extended to } \sigma \text{ and } \Lucky_{\circlearrowright,l}(\yy; \sigma) = L \}.
\end{align*}
\end{definition}

\begin{remark}\label{remk. blucky inclusion exclusion}
By inclusion-exclusion, the two quantities are related by
\begin{align*}
\lucky^{(\alpha)}_{\circlearrowright,l}(\yy; \pi, L) &= \sum_{S \subseteq L} \lucky^{(\beta)}_{\circlearrowright,l}(\yy; \pi, S),\\
\lucky^{(\beta)}_{\circlearrowright,l}(\yy; \pi, L) &= \sum_{S \subseteq L} (-1)^{|L|-|S|}\lucky^{(\alpha)}_{\circlearrowright,l}(\yy; \pi, S).
\end{align*}
\end{remark}

To analyze these quantities, recall the definition of the $j$-th partial multiplicity of $\pi$ (see Definition~\ref{def. algebraic contributions}),
\[\mu(\pi)_j := \#\{ k \le j : \pi_k = \pi_j \},\]
which counts how many cars up to time $j$ are assigned to the same block as the $j$-th car.
If $\sigma_j \in Y_i$, then $\mu(\pi)_j-1$ is precisely the number of cars that have already attempted to park in block $Y_i$ before the $j$-th car.

When $l>0$, the first space of block $Y_l$ is unavailable. This is encoded by the correction term $\delta_{\pi_j,l}$, so that
\[o_{j,l}(\pi) := \mu(\pi)_j - 1 + \delta_{\pi_j,l}\]
represents the total number of spaces in $Y_{\pi_j}$ that are unavailable to the $j$-th car at the moment it arrives, including the ones occupied and those unavailable.

Since cars search for parking spaces cyclically within their block, the $j$-th car finds a space to park if and only if
\[o_{j,l}(\pi) + 1 \le y_{\pi_j}.\]
This leads to a split of the set $[r]$ by:
\[A_{\pi,l} := \{ j \in [r] : o_{j,l}(\pi) + 1 \le y_{\pi_j} \}, \qquad
I_{\pi,l} := \{ j \in [r] : o_{j,l}(\pi) + 1 > y_{\pi_j} \} = [r]\setminus A_{\pi,l}.\]
The set $A_{\pi,l}$ consists of cars that always succeed in parking, regardless of their precise initial preference within the block. Meanwhile, $I_{\pi,l}$ collects the indices of cars that necessarily fail to park, since their block is already saturated when they arrive.

\begin{proposition}\label{prop. ecuaciones de blucky by pi}
Let $\ss$ and $\yy$ be vectors of non-negative integers, let $\pi \in \mulPerm{\ss}$ and $L \subseteq A_{\pi,l}$. Then:
\begin{equation}\label{eq. blucky alpha}
    \lucky^{(\alpha)}_{\circlearrowright,l}(\yy; \pi, L) = 
    \prod_{j \in L \sqcup I_{\pi,l}} y_{\pi_j} \cdot 
    \prod_{j \in (A_{\pi,l} \setminus L)}o_{j,l}(\pi),
\end{equation}
\begin{equation}\label{eq. blucky beta}
    \lucky^{(\beta)}_{\circlearrowright,l}(\yy; \pi, L) = 
    \prod_{j \in L} \big(y_{\pi_j} - o_{j,l}(\pi)\big) \cdot
    \prod_{j \in (A_{\pi,l} \setminus L)}o_{j,l}(\pi) \cdot
    \prod_{j \in I_{\pi,l}} y_{\pi_j}.
\end{equation}

In particular, if $L \cap I_{\pi,l} \neq \varnothing$, then $\lucky^{(\alpha)}_{\circlearrowright,l}(\yy; \pi, L) = \lucky^{(\beta)}_{\circlearrowright,l}(\yy; \pi, L) = 0.$
\end{proposition}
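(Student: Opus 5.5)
The plan is to count extensions $\sigma$ of the fixed multipermutation $\pi$ position by position. The key observation is that, once $\pi$ is fixed, the set of spaces in block $Y_{\pi_j}$ that are unavailable to car $j$ has cardinality exactly $o_{j,l}(\pi)$ when $j\in A_{\pi,l}$. Indeed, every earlier car in block $Y_{\pi_j}$ parked, since blocks fill one car at a time and $o+1\le y$ at that earlier time as well. Moreover, the count $o_{j,l}(\pi)$ also includes the permanently blocked first spot when $\pi_j=l$. Note that this cardinality is independent of the earlier choices $\sigma_1,\dots,\sigma_{j-1}$, even though the \emph{set} of unavailable spaces is not.

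For $j\in A_{\pi,l}$, car $j$ is lucky if and only if $\sigma_j$ lies among the $y_{\pi_j}-o_{j,l}(\pi)$ free spaces of $Y_{\pi_j}$. It is unlucky if and only if $\sigma_j$ lies among the $o_{j,l}(\pi)$ unavailable ones; in that case it still parks, by cyclic search within the block, so the occupancy count used by later cars is unaffected. For $j\in I_{\pi,l}$, the block is saturated (all $y_{\pi_j}$ spaces are occupied or blocked). Hence car $j$ never parks and is never lucky, and all $y_{\pi_j}$ choices of $\sigma_j$ are possible and irrelevant to later cars.

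Next I process positions $j=1,\dots,r$ in order and count extensions with $\Lucky_{\circlearrowright,l}(\yy;\sigma)=L$, where $L\subseteq A_{\pi,l}$. At step $j$, the number of admissible choices for $\sigma_j$ is determined as follows:
\begin{itemize}
\item it is $y_{\pi_j}-o_{j,l}(\pi)$ if $j\in L$;
\item it is $o_{j,l}(\pi)$ if $j\in A_{\pi,l}\setminus L$;
\item it is $y_{\pi_j}$ if $j\in I_{\pi,l}$.
\end{itemize}
Since each of these counts depends only on $\pi$ and not on the previous choices, a sequential (tree) counting argument gives the product formula \eqref{eq. blucky beta}.

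For \eqref{eq. blucky alpha}, I would run the same count with the condition $\Lucky\subseteq L$. Positions $j\in L$ are then unconstrained, giving $y_{\pi_j}$ choices. Alternatively, I sum \eqref{eq. blucky beta} over $S\subseteq L$ using Remark~\ref{remk. blucky inclusion exclusion}, and factor the sum as $\prod_{j\in L}\big((y_{\pi_j}-o_{j,l}(\pi))+o_{j,l}(\pi)\big)$, exactly as in the proof of Lemma~\ref{lemma. relation into beta and alpha alg}.

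Finally, the vanishing claim for $L\cap I_{\pi,l}\neq\varnothing$ follows because cars indexed by $I_{\pi,l}$ are never lucky, so no $\sigma$ has $\Lucky=L$; this gives $\lucky^{(\beta)}=0$. For $\lucky^{(\alpha)}$, I expect the intended reading is that it must be interpreted via the $\beta$-convention or restricted to $L\subseteq A_{\pi,l}$, and I would remark on that.

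The main obstacle is the careful justification of the invariant that the number of occupied-or-blocked spaces in block $Y_{\pi_j}$ equals $o_{j,l}(\pi)$ exactly when $j\in A_{\pi,l}$. This needs an induction on $\mu(\pi)_j$ showing that, within a block, all cars up to saturation park and all later ones fail. It also relies on cyclic search guaranteeing that a car parks whenever a free spot exists.
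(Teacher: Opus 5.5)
Your proof is the paper's: the same position-by-position count in which $j\in L$, $j\in A_{\pi,l}\setminus L$ and $j\in I_{\pi,l}$ contribute $y_{\pi_j}-o_{j,l}(\pi)$, $o_{j,l}(\pi)$ and $y_{\pi_j}$ choices respectively. You also justify the occupancy invariant, which the paper uses tacitly: the cars of a block park until the block saturates, so exactly $o_{j,l}(\pi)$ spaces are unavailable to each $j\in A_{\pi,l}$.

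Your hesitation about the final clause is well founded, and you should state it outright rather than hedge. Cars in $I_{\pi,l}$ are never lucky, so $\lucky^{(\alpha)}_{\circlearrowright,l}(\yy;\pi,L)=\lucky^{(\alpha)}_{\circlearrowright,l}(\yy;\pi,L\cap A_{\pi,l})$ for every $L$. Hence only the $\beta$-vanishing is true, whereas the paper's proof asserts that both quantities vanish.

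Concretely, take $n=1$, $\ss=(2)$, $\yy=(1)$, $l=0$ and $\pi=(1,1)$. Then $A_{\pi,l}=\{1\}$ and $I_{\pi,l}=\{2\}$. The unique extension $\sigma=(1,1)$ has $\Lucky_{\circlearrowright,0}(\yy;\sigma)=\{1\}$, so $\lucky^{(\alpha)}_{\circlearrowright,0}(\yy;\pi,\{1,2\})=1\neq 0$.
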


\begin{proof}
The argument is similar in both cases. For $\sigma \in \ExtPerm{\ss}{\yy}$ with base $\pi$, each position $j \in [r]$ satisfies $\sigma_j \in Y_{\pi_j}$. We will divide the proof into three cases, depending on where $j$ is.
\begin{enumerate}[(I)]
    \item If $j \in L \subseteq A_{\pi,l}$, then the $j$-th car may or may not be lucky, giving $y_{\pi_j}$ possible choices for $\sigma_j$. If it is required to be lucky, it cannot choose any of the $o_{j,l}(\pi)$ previously occupied spaces in $Y_{\pi_j}$, so there are $y_{\pi_j} - o_{j,l}(\pi)$ valid choices.
    \item If $j \in A_{\pi,l} \setminus L$, then the $j$-th car cannot be lucky, and thus must select one of the $o_{j,l}(\pi)$ previously occupied spaces.
    \item If $j \in I_{\pi,l}$, then the block $Y_{\pi_j}$ is already full. The car cannot be lucky and may take any of the $y_{\pi_j}$ possible spaces.
\end{enumerate}

The condition $L \subseteq A_{\pi,l}$ is necessary because if there is a car in $I_{\pi,l}$, it can never be lucky. Thus, if $L \cap I_{\pi,l} \neq \varnothing$, the quantities vanish.
\end{proof}

The formulas obtained in Proposition \ref{prop. ecuaciones de blucky by pi} and the algebraic contributions $\alg_{l}^{(\ \cdot \ )}(\yy;\pi,L)$ introduced in Definition \ref{def. algebraic contributions} bear evident similarities. However, there is an essential difference: the algebraic contributions are defined for every subset $L\subseteq[r]$, whereas the lucky quantities $\blucky_{(\ \cdot \ ),l}(\pi, \yy, L)$ depend on the set $I_{\pi,l}$ of cars that can never be lucky. Nonetheless, there is a general relationship between these two families of quantities.

\begin{proposition}\label{prop. blucky-vs-alg}
Let $\ss = (s_1,\dots,s_n)$ and $\yy = (y_1,\dots,y_n)$ be a vector of non-negative integers. For all $L \subseteq [r]$, $r = \sum_{i=1}^{n}s_i$, and for all $\pi \in \mulPerm{\ss}$, we have that:
\begin{enumerate}[\normalfont (I)]
    \item $\displaystyle\lucky^{(\alpha)}_{\circlearrowright,l}(\yy; \pi, L) \;=\; \sum_{S \subseteq I_{\pi,l}}(-1)^{|S\cap L|}\,\alg_{l}^{(\alpha)}(\yy;\pi,L \cup S)$ \\
    \item $\displaystyle\lucky^{(\beta)}_{\circlearrowright,l}(\yy; \pi, L) \;=\; \sum_{S \subseteq I_{\pi,l}}(-1)^{|S\cap L|}\,\alg_{l}^{(\beta)}(\yy;\pi,L \cup S)$
\end{enumerate}
In particular, if $L\subseteq A_{\pi,l}$ then:
$$\lucky^{(\alpha)}_{\circlearrowright,l}(\yy; \pi, L) \;=\; \sum_{S \subseteq I_{\pi,l}}\alg_{l}^{(\alpha)}(\yy;\pi,L \cup S),
\qquad
\lucky^{(\beta)}_{\circlearrowright,l}(\yy; \pi, L) \;=\; \sum_{S \subseteq I_{\pi,l}}\alg_{l}^{(\beta)}(\yy;\pi,L \cup S).$$
\end{proposition}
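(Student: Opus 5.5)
The plan is to prove (II) by a direct factorization and then deduce (I) from (II), using only the inclusion--exclusion relations already available: Remark~\ref{remk. blucky inclusion exclusion} for the lucky quantities and Lemma~\ref{lemma. relation into beta and alpha alg} for the algebraic ones.

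\textbf{Step 1: reduce the sign to a single subset sum.} Fix $\pi$ and $l$, and write $A=A_{\pi,l}$, $I=I_{\pi,l}$, $o_j=o_{j,l}(\pi)$. Split $L=L_A\sqcup L_I$ with $L_A=L\cap A$ and $L_I=L\cap I$. Each $S\subseteq I$ decomposes uniquely as $S=T\sqcup U$ with $T=S\cap L\subseteq L_I$ and $U=S\setminus L\subseteq I\setminus L$. Since $L\cup S=L\cup U$ does not depend on $T$, both right-hand sides factor as
\[
\Big(\sum_{T\subseteq L_I}(-1)^{|T|}\Big)\cdot\sum_{U\subseteq I\setminus L}\alg_l^{(\cdot)}(\yy;\pi,L\cup U).
\]
If $L_I\neq\varnothing$, the first factor vanishes. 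This matches the vanishing of $\lucky^{(\beta)}$ in Proposition~\ref{prop. ecuaciones de blucky by pi}, since a car in $I$ is never lucky. If $L\subseteq A$, the first factor is $1$, which gives the unsigned ``in particular'' form. So it suffices to treat $L\subseteq A$.

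\textbf{Step 2: prove (II) for $L\subseteq A$.} Expand $\alg_l^{(\beta)}(\yy;\pi,L\cup U)$ as a product over positions and sum over $U\subseteq I$. Positions in $L$ contribute $y_{\pi_j}-o_j$, and positions in $A\setminus L$ contribute $o_j$. Each position $j\in I$ contributes either $y_{\pi_j}-o_j$ (if $j\in U$) or $o_j$ (if $j\notin U$). Summing over $U$ therefore gives, exactly as in the proof of Lemma~\ref{lemma. relation into beta and alpha alg}(i), the factor $\prod_{j\in I}\big((y_{\pi_j}-o_j)+o_j\big)=\prod_{j\in I}y_{\pi_j}$. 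The result is formula~\eqref{eq. blucky beta}, which proves (II).

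\textbf{Step 3: deduce (I) from (II).} By Remark~\ref{remk. blucky inclusion exclusion},
\[
\lucky^{(\alpha)}_{\circlearrowright,l}(\yy;\pi,L)=\sum_{L'\subseteq L}\lucky^{(\beta)}_{\circlearrowright,l}(\yy;\pi,L').
\]
Insert (II) for each $L'$; by Step 1 only $L'\subseteq L\cap A$ survive. Then regroup the double sum over $(L',S)$ according to the set $L'\cup S$ and apply Lemma~\ref{lemma. relation into beta and alpha alg}(i) to convert the $\beta$-contributions into $\alpha$-contributions. Step 1 handles the sign bookkeeping for $L\cap I\neq\varnothing$ in the same way.

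\textbf{Main obstacle.} The regrouping in Step 3 is the delicate point, because the sets $L'\cup S$ range over subsets of $L\cup I$ rather than of $L$. Before trusting it, I would check the outcome against the direct count obtained as in Proposition~\ref{prop. ecuaciones de blucky by pi}, namely $\prod_{j\in L\cap A}y_{\pi_j}\prod_{j\in A\setminus L}o_j\prod_{j\in I}y_{\pi_j}$.

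A direct expansion of the right-hand side of (I), done as in Step 2, instead produces the factor $\prod_{j\in I}(y_{\pi_j}+o_j)$. This does not agree with the direct count when some $o_j\neq 0$ for $j\in I$, and (I)'s vanishing claim for $L\cap I\neq\varnothing$ also conflicts with the direct count, which is nonzero. So (I) as worded looks doubtful; this check should be done first, and the source of the discrepancy located before completing Step 3.
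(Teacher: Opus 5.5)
Your proof of (II) is correct and follows the paper's own argument. For $L\cap I_{\pi,l}\neq\varnothing$ the paper uses a sign-reversing involution on $\mathcal{P}(I_{\pi,l})$ that toggles a fixed $j_0\in L\cap I_{\pi,l}$; this is your vanishing factor $\sum_{T\subseteq L_I}(-1)^{|T|}=0$. For $L\cap I_{\pi,l}=\varnothing$ it factors exactly as in your Step~2 to get \eqref{eq. blucky beta}.

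The paper then asserts that (I) ``follows'' from (II) via Lemma~\ref{lemma. relation into beta and alpha alg} and Remark~\ref{remk. blucky inclusion exclusion}. That is your Step~3, and your suspicion is right: it cannot be completed, because (I) and the $\alpha$ half of the ``in particular'' are false in general. A minimal counterexample is $n=1$, $\ss=(2)$, $\yy=(1)$, $l=0$, $\pi=(1,1)$:
\begin{itemize}
\item Here $o_{1,0}(\pi)=0$ and $o_{2,0}(\pi)=1$, so $A_{\pi,0}=\{1\}$ and $I_{\pi,0}=\{2\}$.
\item The only extended permutation is $\sigma=(1,1)$. Its first car parks luckily and its second car leaves, so $\Lucky_{\circlearrowright,0}(\yy;\sigma)=\{1\}$.
\item Hence $\lucky^{(\alpha)}_{\circlearrowright,0}(\yy;\pi,\{1\})=1$, while
\[
\sum_{S\subseteq\{2\}}\alg^{(\alpha)}_{0}(\yy;\pi,\{1\}\cup S)=y_1\,o_{2,0}(\pi)+y_1y_1=2 .
\]
\item For $L=\{1,2\}$ the right side of (I) is $0$, but the left side is $1$.
\end{itemize}
This is exactly your discrepancy $\prod_{j\in I_{\pi,l}}(y_{\pi_j}+o_{j,l}(\pi))$ versus $\prod_{j\in I_{\pi,l}}y_{\pi_j}$. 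Note that $o_{j,l}(\pi)\geq y_{\pi_j}$ on $I_{\pi,l}$, so the two products differ whenever those $y_{\pi_j}\geq 1$. The same example shows that the clause in Proposition~\ref{prop. ecuaciones de blucky by pi} saying $\lucky^{(\alpha)}$ vanishes when $L\cap I_{\pi,l}\neq\varnothing$ is also wrong; only the $\beta$ count vanishes.

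If you carry out the regrouping of Step~3 honestly, you get the correct $\alpha$-statement instead of (I):
\begin{itemize}
\item Sum (II) over $L'\subseteq L$. Only $L'\subseteq L\cap A_{\pi,l}$ contribute, and for these all signs are $+1$.
\item The pairs $(L',S)$ with $S\subseteq I_{\pi,l}$ are in bijection with subsets $T=L'\sqcup S$ of $(L\cap A_{\pi,l})\sqcup I_{\pi,l}=L\cup I_{\pi,l}$.
\item Lemma~\ref{lemma. relation into beta and alpha alg}(i) then yields
\[
\lucky^{(\alpha)}_{\circlearrowright,l}(\yy;\pi,L)=\alg^{(\alpha)}_{l}(\yy;\pi,L\cup I_{\pi,l})\qquad\text{for every } L\subseteq[r].
\]
\end{itemize}
This agrees with your direct count and with \eqref{eq. blucky alpha}. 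Nothing downstream is affected: the only later use, Corollary~\ref{cor. full-capacity}, has $I_{\pi,l}=\varnothing$. In that case (I), (II) and the corrected identity all reduce to $\lucky=\alg$.
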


\begin{proof}
We prove the identity for the $(\beta)$ version, since the relationship between $(\alpha)$ and $(\beta)$ versions are determined by an inclusion-exclusion argument by Lemma~\ref{lemma. relation into beta and alpha alg} and Remark~\ref{remk. blucky inclusion exclusion}. 

The proof is divided into two cases depending on whether $L \cap I_{\pi,l}$ is empty or not.
\begin{enumerate}
    \item[\textbf{Case 1.}] $L \cap I_{\pi,l} \not = \emptyset$: In this case we know that $\lucky^{(\beta)}_{\circlearrowright,l}(\yy; \pi, L) = 0$, any index in $I_{\pi,l}$ corresponds to a car that arrives after its block is already saturated and therefore cannot be lucky.
    
    Given that $L \cap I_{\pi,l} \not = \emptyset$, then there is an element $j_0 \in L \cap I_{\pi,l}$. Fix this element, and consider the map $\vartheta_{j_0} : \mathcal{P}(I_{\pi,l}) \longrightarrow \mathcal{P}(I_{\pi,l})$, defined as:
    \[\vartheta_{j_0}(S) = \begin{cases}
    S \cup \{j_0\} &\text{if }j_0 \not\in S\\
    S \setminus \{j_0\} &\text{if }j_0 \in S\end{cases} \ \ .\]
    Note that $j_0 \in I_{\pi,l}$, the map $\vartheta_{j_0}$ is an involution in $\mathcal{P}(I_{\pi,l})$ without fixed points. \\
    
    Since $\alg_{l}^{(\beta)}(\yy;\pi, L \cup S) = \alg_{l}^{(\beta)}(\yy;\pi, L \cup \vartheta_{j_0}(S))$, because $L \cup S = L \cup \vartheta_{j_0}(S)$, and $(-1)^{|S\cap L|} = - (-1)^{|\vartheta_{j_0}(S)\cap L|}$, then $\vartheta_{j_0}$ is a sign-reversing involution without fixed points, therefore the sum vanishes.
    \item [\textbf{Case 2.}] $L \cap I_{\pi,l}  = \emptyset$: Therefore for all $S\subseteq I_{\pi,l}$ we have that $|S \cap L| = 0$, then we will show that: 
    \[\lucky^{(\beta)}_{\circlearrowright,l}(\yy; \pi, L) \;=\; \sum_{S \subseteq I_{\pi,l}}\alg_{l}^{(\beta)}(\yy;\pi,L \cup S).\]
    If we expand the sum by the definition of product of $\alg_{l}^{(\beta)}(\yy ; \pi, S)$ since for all $S \cap L = \emptyset$ we can factor when $j \in L$, then we have that:
    \begin{multline*}
        \sum_{S\subseteq I_{\pi,l}}\alg_{l}^{(\beta)}(\yy ; \pi,L\cup S) =\\
        \prod_{j\in L}\big(y_{\pi_j}-o_{j,l}(\pi)\big)\cdot\prod_{j\in A_{\pi,l}\setminus L}o_{j,l}(\pi)
        \quad\cdot
        \sum_{S\subseteq I_{\pi,l}}\left(\prod_{j\in S}\big(y_{\pi_j}-o_{j,l}(\pi)\big)\prod_{j\in I_{\pi,l}\setminus S}o_{j,l}(\pi)\right).
    \end{multline*}
    Since $(y_{\pi_j}-o_{j,l}(\pi)) + o_{j,l}(\pi)=y_{\pi_j}$ for every $j$, then when we expand the sum over $S \subseteq I_{\pi,l}$, yields 
    \begin{equation*}
        \sum_{S\subseteq I_{\pi,l}}\alg_{l}^{(\beta)}(\yy ; \pi,L\cup S) = \prod_{j \in L} \big(y_{\pi_j} - o_{j,l}(\pi)\big) \cdot
    \prod_{j \in (A_{\pi,l} \setminus L)}o_{j,l}(\pi) \cdot
    \prod_{j \in I_{\pi,l}} y_{\pi_j},
    \end{equation*}
    which is exactly the formula for $\lucky^{(\beta)}_{\circlearrowright,l}(\yy; \pi, L)$ in Proposition~\ref{prop. ecuaciones de blucky by pi}.\qedhere
\end{enumerate}
\end{proof}

As a consequence of the preceding corollary, we obtain the following specialization, which will be instrumental to deduce Step (i) in the scheme of the main proof.

\begin{corollary}\label{cor. full-capacity}
If $y_i\ge s_i + \delta_{i,l}$ for every $i\in[n]$ then $I_{\pi,l}=\varnothing$ for every $\pi \in \mulPerm{\ss}$. Consequently, for every $\pi$ and every $L\subseteq[r]$ we have
\[\lucky^{(\beta)}_{\circlearrowright,l}(\yy; \pi, L)=\alg_{l}^{(\beta)}(\yy ; \pi,L).\]
In particular, by summing over all multipermutations $\pi \in \mulPerm{\ss}$ and all $L \subseteq [r]$, then by Proposition~\ref{prop. refine of f_s,y by pi and L} yields
\[{f}_{\ss,l}(t,\yy)=\sum_{L\subseteq[r]}\sum_{\pi\in\mulPerm{\ss}}\lucky^{(\beta)}_{\circlearrowright,l}(\yy; \pi, L)\,t^{|L|}
=\sum_{\sigma\in\ExtPerm{\ss}{\yy}} t^{\lucky_{\circlearrowright,l}(\yy; \sigma)}.\]
\end{corollary}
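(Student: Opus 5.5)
The plan is to first show that the capacity hypothesis kills the set $I_{\pi,l}$, and then let Proposition~\ref{prop. blucky-vs-alg} and Proposition~\ref{prop. refine of f_s,y by pi and L} do the remaining work.

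\textbf{Step 1: $I_{\pi,l}=\varnothing$.} Fix $\pi\in\mulPerm{\ss}$ and a position $j\in[r]$, and put $i=\pi_j$. By definition of the partial multiplicity, $\mu(\pi)_j\le s_i$. Hence
\[o_{j,l}(\pi)+1=\mu(\pi)_j+\delta_{i,l}\le s_i+\delta_{i,l}\le y_i,\]
where the last inequality is the hypothesis. So every $j$ lies in $A_{\pi,l}$, which means $I_{\pi,l}=\varnothing$ for every $\pi$.

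\textbf{Step 2: lucky counts equal algebraic contributions.} Apply Proposition~\ref{prop. blucky-vs-alg}(II). Since $I_{\pi,l}$ is empty, the sum over $S\subseteq I_{\pi,l}$ has only the term $S=\varnothing$, whose sign is $(-1)^0=1$. This gives
\[\lucky^{(\beta)}_{\circlearrowright,l}(\yy;\pi,L)=\alg_l^{(\beta)}(\yy;\pi,L)\quad\text{for every } L\subseteq[r].\]

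\textbf{Step 3: summing up.} Multiply the identity of Step 2 by $t^{|L|}$ and sum over all $L\subseteq[r]$ and all $\pi\in\mulPerm{\ss}$. By Proposition~\ref{prop. refine of f_s,y by pi and L}, the right-hand side becomes $f_{\ss,l}(t,\yy)$. On the left-hand side, $\lucky^{(\beta)}_{\circlearrowright,l}(\yy;\pi,L)$ counts exactly those extensions $\sigma$ of $\pi$ whose lucky set is $L$. Summing over $L$ therefore counts each $\sigma\in\ExtPerm{\ss}{\yy}$ once, with weight $t^{\lucky_{\circlearrowright,l}(\yy;\sigma)}$.

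Summing over $\pi$ then uses that $\ExtPerm{\ss}{\yy}$ is the disjoint union of the extensions of the various $\pi$. This holds because $\pi$ is recovered from $\sigma$ by recording the block $Y_{\pi_j}$ containing each $\sigma_j$. One caveat: if some $y_i=0$, then $Y_i$ is empty and the corresponding extensions do not exist. But then the hypothesis forces $s_i=0$, so such symbols never occur in $\pi$ and nothing is lost. This gives the displayed formula.

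\textbf{Main obstacle.} There is no real obstacle. The only point needing care is the bound $\mu(\pi)_j\le s_i$ used in Step 1, together with the check that an extension $\sigma$ determines $\pi$ uniquely. Both follow directly from the definitions.
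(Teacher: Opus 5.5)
Your proposal is correct and follows the same route as the paper. The bound $\mu(\pi)_j\le s_{\pi_j}$ gives $o_{j,l}(\pi)+1\le y_{\pi_j}$, so $I_{\pi,l}=\varnothing$; Proposition~\ref{prop. blucky-vs-alg} then reduces to its $S=\varnothing$ term, and summing via Proposition~\ref{prop. refine of f_s,y by pi and L} over the disjoint decomposition of $\ExtPerm{\ss}{\yy}$ by base word $\pi$ gives the generating function (your remark that $y_i=0$ forces $s_i=0$ is a detail the paper leaves implicit).
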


We are ready to prove a combinatorial interpretation for the coefficients of the polynomials appearing in equation~\eqref{eq. polynomial of algebraic lucky in}.

\begin{theorem}\label{thm. coefficients-count}
Let $\ss,\yy \in \Z_{\geq 0}^{n}$ and let $r=\sum_i s_i$, and assume that $y_i\ge s_i + \delta_{i,l}$ for every $i$. Consider the polynomial
\[{g}_{\ss,l}(t,\yy) := \prod_{i=1}^n\bbinom{y_i t + \delta_{i,l}}{s_i}=\sum_{j=0}^r c_j\,t^j(1+t)^{r-j},\]
so that ${f}_{\ss,l}(t,\yy) = r!\mathscr{M}({g}_{\ss,l}(t,\yy))$. Then for each $j$ the integer $r!\,c_j$ equals the number of $\yy$-extended permutations $\sigma\in\ExtPerm{\ss}{\yy}$ with exactly $j$ lucky cars under the block parking protocol.
\[r!\,c_j \;=\; \#\{\sigma\in\ExtPerm{\ss}{\yy}:\lucky_{\circlearrowright,l}(\yy; \sigma)=j\}.\]
\end{theorem}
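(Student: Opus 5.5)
The proof will chain three earlier results: Lemma~\ref{lemma:magic-multiplicative}, Corollary~\ref{coro:magic-binomials} and Corollary~\ref{cor. full-capacity}.

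\textbf{Step 1: identify $r!\,\mathscr{M}(g_{\ss,l})$ with $f_{\ss,l}$.} Rewrite each factor of $g_{\ss,l}$ as an ordinary binomial:
\[
\bbinom{y_it+\delta_{i,l}}{s_i}=\binom{y_it+\delta_{i,l}+s_i-1}{s_i}=p_{y_i,\,s_i-1+\delta_{i,l},\,s_i}(t).
\]
Every factor has degree exactly $s_i$ in $t$, provided $y_i>0$ whenever $s_i>0$; this is guaranteed by the hypothesis $y_i\ge s_i$. Hence $g_{\ss,l}$ has degree $r$, and Lemma~\ref{lemma:magic-multiplicative} applies factor by factor. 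By Corollary~\ref{coro:magic-binomials},
\[
\mathscr{M}(p_{y_i,b,s_i})=\frac{1}{s_i!}\prod_{k=0}^{s_i-1}\bigl(b-k+(y_i+k-b)t\bigr), \qquad b=s_i-1+\delta_{i,l}.
\]
Reindex with $k\mapsto s_i-k$, exactly as in the proof of Lemma~\ref{lem. trasformacion polynomiasl}. The product then becomes
\[
\prod_{k=1}^{s_i}\Bigl((k-1+\delta_{i,l})+\bigl(y_i-(k-1+\delta_{i,l})\bigr)t\Bigr).
\]
Multiplying over $i$ and by $r!$ turns the prefactor $\prod_i 1/s_i!$ into the multinomial coefficient $\binom{r}{s_1,\dots,s_n}$. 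The result is exactly equation~\eqref{eq. polynomial of algebraic lucky in}, so $f_{\ss,l}(t,\yy)=r!\,\mathscr{M}(g_{\ss,l})=\sum_j r!\,c_j t^j$.

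\textbf{Step 2: apply Corollary~\ref{cor. full-capacity}.} The hypothesis $y_i\ge s_i+\delta_{i,l}$ is exactly the hypothesis of that corollary. It gives
\[
f_{\ss,l}(t,\yy)=\sum_{\sigma\in\ExtPerm{\ss}{\yy}}t^{\lucky_{\circlearrowright,l}(\yy;\sigma)}.
\]

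\textbf{Step 3: compare coefficients.} Equating the coefficients of $t^j$ in the two expressions for $f_{\ss,l}$ yields the claimed equality $r!\,c_j=\#\{\sigma\in\ExtPerm{\ss}{\yy}:\lucky_{\circlearrowright,l}(\yy;\sigma)=j\}$.

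The only delicate point is the degree bookkeeping in Step 1. The operator $\mathscr{M}$ is multiplicative only when each factor is taken with its true degree, so I must check that no factor drops degree. That is precisely where $y_i\ge s_i>0$, or $s_i=0$ giving the trivial factor $1$, is used. I must also track the index shift carefully so that the $\delta_{i,l}$ terms land in the right place.
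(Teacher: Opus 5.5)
Your proposal is correct and follows the paper's proof. The paper likewise obtains $f_{\ss,l}=r!\,\mathscr{M}(g_{\ss,l})$ from the factorwise magic transform (the computation behind Lemma~\ref{lem. trasformacion polynomiasl}), invokes Corollary~\ref{cor. full-capacity}, and compares coefficients of $t^j$; your explicit reindexing and degree check are, if anything, more careful than the paper, which simply cites the Pitman--Stanley lemma for this identity.
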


\begin{proof}
If $y_i\ge s_i + \delta_{i,l}$ Corollary~\ref{cor. full-capacity} and by Lemma~\ref{lem. trasformacion polynomiasl} we obtain that
\[{f}_{\ss,l}(t,\yy)=\sum_{\sigma\in\ExtPerm{\ss}{\yy}} t^{\lucky_{\circlearrowright,l}(\yy; \sigma)} = r!\cdot \mathscr{M}({g}_{\ss,l}(\cdot,\yy))(t).\]
Writing $f_{\ss,\yy}(t)=\sum_{j=0}^r a_j t^j$, then we have that $c_j={a_j}/{r!}$. But $a_j$ is precisely the number of $\yy$-extended permutations with exactly $j$ lucky cars, so $r!c_j$ equals the desired number.
\end{proof}

\subsection{Applying the machinery to generalized permutohedra} We now explain how the combinatorial machinery developed in the previous section fits into the general strategy towards the proof of Theorem~\ref{thm:main-intro}, and how it applies to $\yy$-generalized permutohedra.

\begin{remark}\label{rk. interpretacion subset of com}
Both the set of $H$-Draconian sequences and the family $\mathcal{I}_n$ are particular subsets of the set of weak compositions.
More generally, the constructions of this section apply to any subset $\mathcal{S}$ of weak compositions of $r$ in $n$ parts for which the associated family
$$\ExtPerm{\mathcal{S}}{y}:=\bigsqcup_{\ss\in\mathcal{S}}\ExtPerm{\ss}{\yy}$$
and the polynomial
$$g_{\mathcal{S},l}(t,\yy) :=\sum_{\ss \in \mathcal{S}}\prod_{i=1}^n\bbinom{y_i t + \delta_{i,l}}{s_i}$$
are well defined.
When $y_i \geq r + \delta_{i,l}$, the previous arguments show that the coefficients of $\mathscr{M}(g_{\mathcal{S},l}(t,\yy))$ admit a combinatorial interpretation in terms of lucky cars in $\ExtPerm{\mathcal{S}}{y}$.
In the present work, however, we restrict our attention to those subsets arising naturally from $\yy$-generalized permutohedra, and in particular from Pitman--Stanley polytopes.
\end{remark}

\begin{theorem}\label{thm:step-i-magic}
Let $\yy = (y_1, \dots, y_m)$ be a vector of non-negative integers such that $y_i \geq n - 1$ for all $i$ and $y_1 \geq n$.
Let $\yPerm_H({\bf y})$ be a $\mathbf{y}$-generalized permutohedron associated to a bipartite graph $H\subset K_{m,n}$.
Then the Ehrhart polynomial $\ehr_{\yPerm_H({\bf y})}(t)$ is magic positive.
Moreover, writing
\[\mathfrak{D}_{\yy}(H) := \bigsqcup_{{\bf a}\in\Draco(H)}\ExtPerm{\bf a}{\yy},\]
the coefficient of $t^j(1+t)^{n-1-j}$ equals, up to the normalization factor $(n-1)!$, the number of words in $\mathfrak{D}_{\yy}(H)$ with exactly $j$ lucky cars under the block parking protocol without the first available space.
\end{theorem}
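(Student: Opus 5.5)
We apply Theorem~\ref{thm. coefficients-count} with $l=1$ to each summand of Postnikov's formula (Theorem~\ref{thm:post ehrhart Y-perm}) and then sum over Draconian sequences. Throughout, the ``parking vector'' is $\yy\in\mathbb{Z}^m_{\geq 0}$ and the compositions are the $H$-Draconian sequences $\aaa=(a_1,\dots,a_m)$ with $r=\sum_i a_i=n-1$. The roles of $n$ and $m$ in Section~\ref{sec. magic extenden} are therefore played here by $m$ (number of blocks) and $n-1$ (number of cars).

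First rewrite each summand. Since $\bbinom{ty_1+1}{a_1}=\bbinom{y_1t+\delta_{1,1}}{a_1}$ and $\bbinom{ty_i}{a_i}=\bbinom{y_it+\delta_{i,1}}{a_i}$ for $i\geq 2$, Theorem~\ref{thm:post ehrhart Y-perm} reads
\[
\ehr_{\yPerm_H(\yy)}(t)=\sum_{\aaa\in\Draco(H)} g_{\aaa,1}(t,\yy).
\]
This is exactly the polynomial $g_{\mathcal{S},1}$ of Remark~\ref{rk. interpretacion subset of com} with $\mathcal{S}=\Draco(H)$. Every summand is a polynomial of degree exactly $r=n-1$ in $t$: its leading coefficient is $\prod_i y_i^{a_i}/a_i!$, which is nonzero as long as the $y_i$ with $a_i>0$ are positive. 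This holds under the hypotheses when $n\geq 2$, and the case $n=1$ is trivial since then $\aaa=0$ and the polynomial is $1$. Hence $\mathscr{M}=\mathscr{M}_{n-1}$ acts linearly on the sum, and
\[
(n-1)!\,\mathscr{M}(\ehr_{\yPerm_H(\yy)})=\sum_{\aaa\in\Draco(H)} (n-1)!\,\mathscr{M}(g_{\aaa,1}(t,\yy)).
\]

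Next check the capacity hypothesis of Theorem~\ref{thm. coefficients-count}, namely $y_i\geq a_i+\delta_{i,1}$ for all $i$. The Draconian condition applied to a singleton $\{i\}$ gives $a_i\leq |I_i|-1\leq n-1$. Hence $y_1\geq n\geq a_1+1$ and $y_i\geq n-1\geq a_i$ for $i\geq 2$. Theorem~\ref{thm. coefficients-count} (equivalently Corollary~\ref{cor. full-capacity}) then gives
\[
(n-1)!\,\mathscr{M}(g_{\aaa,1}(t,\yy))=f_{\aaa,1}(t,\yy)=\sum_{\sigma\in\ExtPerm{\aaa}{\yy}} t^{\lucky_{\circlearrowright,1}(\yy;\sigma)}.
\]
Summing over $\aaa\in\Draco(H)$, and using that the sets $\ExtPerm{\aaa}{\yy}$ are disjoint because $\aaa$ is recovered from the block multiplicities of $\sigma$, yields
\[
(n-1)!\,\mathscr{M}(\ehr_{\yPerm_H(\yy)})(t)=\sum_{\sigma\in\mathfrak{D}_{\yy}(H)} t^{\lucky_{\circlearrowright,1}(\yy;\sigma)}.
\]
Reading off the coefficient of $t^j$ gives the claimed interpretation, with the first space of block $Y_1$ removed ($l=1$). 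Since the right-hand side has nonnegative coefficients, magic positivity follows.

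The only delicate point is bookkeeping rather than a hard estimate. One must confirm three things:
\begin{enumerate}[(a)]
    \item The distinguished first factor $\bbinom{ty_1+1}{a_1}$ corresponds to $l=1$, i.e.\ ``without the first available space.''
    \item The machinery of Section~\ref{sec. magic extenden}, stated there with $n$ blocks and $r$ cars, applies verbatim with $m$ blocks and $r=n-1$ cars, as Remark~\ref{rk. interpretacion subset of com} indicates.
    \item All summands have the same degree, so that linearity of $\mathscr{M}$ is legitimate.
\end{enumerate}
Point (c) is where zero parameters $y_i$ could cause trouble, and that is handled by the hypotheses above.
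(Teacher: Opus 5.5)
Your proposal is correct and is essentially the paper's argument. The paper applies Theorem~\ref{thm. coefficients-count} with $l=1$ to each summand $g_{\aaa,1}(t,\yy)$ of Postnikov's formula (Theorem~\ref{thm:post ehrhart Y-perm}), where the capacity condition $y_i\geq a_i+\delta_{i,1}$ follows from $a_i\leq n-1$, and then sums over $\Draco(H)$ exactly as in Remark~\ref{rk. interpretacion subset of com}. Your degree check in (c) is harmless but not needed, since $\mathscr{M}_{n-1}$ is linear on all polynomials of degree at most $n-1$ and the statement is phrased in the basis $t^j(1+t)^{n-1-j}$.
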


This establishes Step (i) of the strategy described above.
Since Pitman--Stanley polytopes form a particular family of $\yy$-generalized permutohedra, the same interpretation applies to $\ehr_{\Pi_n(\yy)}(t)$, yielding Step (ii) through the next corollary. We note that recently Deligeorgaki, Han, and Solus \cite[Theorem~20]{DHS} also proved the magic positivity without providing a combinatorial interpretation for these coefficients. The next result has such a combinatorial interpretation in this case.

\begin{corollary}\label{cor:step-2-magic}
Let $\yy = (y_1, \dots, y_n)$ be a vector of non-negative integers such that $y_i \geq n - 1$ for all $i$ and $y_1 \geq n$.
Then the coefficients of the Ehrhart polynomial $\ehr_{\Pi_n(\yy)}(t)$ in the magic basis admit a combinatorial interpretation:
up to the normalization factor $(n-1)!$, the coefficient of $t^j(1+t)^{n-1-j}$ equals the number of $\yy$-parking functions with exactly $j$ lucky cars under the block parking protocol without the first available space.
\end{corollary}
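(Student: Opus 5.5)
The plan is to bypass the general Draconian machinery and work directly with the Pitman--Stanley specialization. The relevant formulas are Corollary~\ref{coro:ehr-PS}, equation~\eqref{eq:refined} and Remark~\ref{remk: y-parking to y-extended}, all with the parameter $l=1$ (first space of $Y_1$ unavailable) and $r=n$.

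First I would observe that $\bbinom{t y_1+1}{s_1}\prod_{i\ge 2}\bbinom{t y_i}{s_i}$ is exactly $g_{\ss,1}(t,\yy)$ from Theorem~\ref{thm. coefficients-count}. Hence $\ehr_{\Pi_n(\yy)}(t)=\sum_{\ss\in\mathcal{I}_n} g_{\ss,1}(t,\yy)$, and every summand has degree $n$. Consequently $f_{\Pf_n(\yy)}(t)=n!\,\mathscr{M}(\ehr_{\Pi_n(\yy)})=\sum_{\ss\in\mathcal{I}_n} f_{\ss,1}(t,\yy)$.

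Next I want, for each $\ss\in\mathcal{I}_n$, the identity $f_{\ss,1}(t,\yy)=\sum_{\sigma\in\ExtPerm{\ss}{\yy}} t^{\lucky_{\circlearrowright,1}(\yy;\sigma)}$. Summing this over $\mathcal{I}_n$ and using $\Pf_n(\yy)=\bigsqcup_{\ss\in\mathcal{I}_n}\ExtPerm{\ss}{\yy}$ then gives the claim: the coefficient of $t^j$ in $n!\,\mathscr{M}(\ehr_{\Pi_n(\yy)})$ counts $\yy$-parking functions with exactly $j$ lucky cars. Here the magic basis is the degree-$n$ one, since $\Pi_n(\yy)$ is full dimensional, and the normalization is by the factorial of the dimension.

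The main obstacle is that Corollary~\ref{cor. full-capacity} needs $y_i\ge s_i+\delta_{i,1}$. This fails in boundary cases, for example $\ss=(n,0,\dots,0)$ with $y_1=n$, or $s_i=n$ for some $i\ge 2$ with $y_i=n-1$. So I would prove a slightly sharper version: if $o_{j,1}(\pi)\le y_{\pi_j}$ for every $j$, then $\lucky^{(\beta)}_{\circlearrowright,1}(\yy;\pi,L)=\alg^{(\beta)}_1(\yy;\pi,L)$ for all $L$.

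Under this hypothesis, every $j\in I_{\pi,1}$ satisfies $o_{j,1}(\pi)=y_{\pi_j}$. There are two cases.
\begin{itemize}
\item If $L\cap I_{\pi,1}\neq\varnothing$, then $\alg^{(\beta)}$ contains a factor $y_{\pi_j}-o_{j,1}(\pi)=0$, matching $\lucky^{(\beta)}=0$ from Proposition~\ref{prop. ecuaciones de blucky by pi}.
\item If $L\subseteq A_{\pi,1}$, the factor $\prod_{j\in I_{\pi,1}} y_{\pi_j}$ in \eqref{eq. blucky beta} equals $\prod_{j\in I_{\pi,1}} o_{j,1}(\pi)$, so the two formulas coincide termwise.
\end{itemize}
Equivalently, in Proposition~\ref{prop. blucky-vs-alg} every term with $S\neq\varnothing$ vanishes.

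It remains to check the hypothesis. Since $o_{j,1}(\pi)\le s_{\pi_j}-1+\delta_{\pi_j,1}$, it suffices that $y_1\ge s_1$ and $y_i\ge s_i-1$ for $i\ge 2$. Both hold because every $s_i\le n$, $y_1\ge n$ and $y_i\ge n-1$.

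With the sharpened identity in hand, sum over $\pi\in\mulPerm{\ss}$ and $L$ exactly as in Proposition~\ref{prop. refine of f_s,y by pi and L} to get the lucky-car generating function for each $\ss$. Then sum over $\ss\in\mathcal{I}_n$. Alternatively, one could try to invoke Theorem~\ref{thm:step-i-magic} with $H\subset K_{n,n+1}$, $I_i=[n+2-i]$. However, its hypothesis would read $y_i\ge n$, which is stronger than stated, so the direct route with the sharpened capacity argument is the one I would follow.
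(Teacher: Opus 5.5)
Your argument is correct, and it proves the corollary under its stated hypotheses, which the paper's one-line deduction does not quite do.

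\emph{The paper's route.} The paper specializes Theorem~\ref{thm:step-i-magic} to $H\subset K_{n,n+1}$ with $I_i=[n+2-i]$, whose Draconian sequences are exactly $\mathcal{I}_n$. So it rests on Theorem~\ref{thm. coefficients-count} and Corollary~\ref{cor. full-capacity}, that is, on $I_{\pi,1}=\varnothing$. This requires $y_1\ge s_1+1$ and $y_i\ge s_i$. Since $s_1\ge 1$ forces $s_i\le n-1$ for $i\ge 2$, your second boundary example never arises. The only composition left uncovered under the stated hypotheses is $\ss=(n,0,\dots,0)$ with $y_1=n$, where the last car must fail. Taken literally, the paper's route therefore needs $y_1\ge n+1$, and even $y_i\ge n$ if Theorem~\ref{thm:step-i-magic} is quoted verbatim with $n+1$ right-hand vertices.

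\emph{Your route.} Your sharpened lemma closes this gap. An exactly saturated block, $o_{j,1}(\pi)=y_{\pi_j}$, gives the degenerate factor $0\cdot t+y_{\pi_j}$ in $f_{\pi,1}$ and exactly $y_{\pi_j}$ free choices on the lucky side. This reaches the same threshold $y_1\ge n$ as Corollary~\ref{coro:magic-binomials}. It is the block-protocol counterpart of the ``only the last car may fail'' device in Remark~\ref{Rmk. role of last car}: under your hypothesis, only the last car of a block can fail.

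A check at the boundary: for $\yy=(2,1)$, the block protocol with $l=1$ gives the lucky-car distribution $(2,4,2)$. This matches $\ehr_{\Pi_2(2,1)}(t)=(1+t)^2+2t(1+t)+t^2$ from Example~\ref{ex:Ehr PS2}.

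The paper's route gives uniformity over all $\yy$-generalized permutohedra; yours gives the sharp hypothesis. You are also right to use the normalization $n!$ and the basis $t^j(1+t)^{n-j}$, as in Theorem~\ref{thm:main-intro}. $\Pi_n(\yy)$ is $n$-dimensional, and the $(n-1)!$ in the statement is carried over from the permutohedron version.
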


It is crucial to emphasize that the arguments employed to deduce the last theorem and its corollary rely \emph{in an essential way} on the assumption that the entries of $\yy$ are sufficiently large, which ensures that all cars can potentially park.
When this condition fails, the block parking protocol must be modified. In the next section, we show that for Pitman--Stanley polytopes such modifications lead to a valid combinatorial interpretation for all $\yy\in\mathbb{Z}_{>0}^n$, which will thus complete Step~(iii) and hence the proof of Theorem~\ref{thm:main-intro}.

\section{Magic positivity and lucky statistic on \texorpdfstring{$\yy$}--Parking Functions}\label{sec:magic}

In this section we finalize the arguments of the proof of Theorem~\ref{thm:main-intro}. In Section~\ref{ssec: gen parking} we introduce the generalized parking functions and relate them to ${\bf y}$-parking functions. In Section~\ref{protocol gen parking functions} we give the parking protocol for words in a fixed number of spaces and the lucky statistic and derive key properties of these. In Section~\ref{ssec: lucky subset lucky cars} we use this statistic to give the interpretation of the coefficients of the magic basis of the Ehrhart polynomial of the Pitman--Stanley polytope. This interpretation is proved in Section~\ref{ssec: algo preserving pf}.

\subsection{Generalized Parking Functions} \label{ssec: gen parking}

As recapitulated in Section~\ref{subsec:block-parking-protocol}, the classical notion of parking functions refers to the set of words $\Pf_n \subseteq [n]^n$ for which all cars successfully park. Among the several generalizations of this notion proposed in the literature, one particularly useful for our endeavors is the one studied by Kenyon and Yin in \cite{kenyonparkingfunctions}. We say that a word $w \in [m]^n$ is a \textit{generalized parking function} if all $n$ cars successfully park in the $m$ available spaces. The set of generalized parking functions is denoted by $\Pf(n,m)$. Kenyon and Yin \cite{kenyonparkingfunctions} provide the following characterization that allows one to determine whether a word belongs to $\Pf(n,m)$.

\begin{proposition}[{\cite[Prop.~2.1]{kenyonparkingfunctions}}]\label{Pro. all successfully}
Let $w \in [m]^n$ be a word whose non-decreasing rearrangement is $b_1 \leq b_2 \leq \cdots \leq b_n$. Then $w \in \Pf(n,m)$ if and only if
\[b_j \leq m - n + j \quad \text{for all } j = 1, \dots, n.\]
\end{proposition}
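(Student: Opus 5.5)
The plan is to prove the two implications separately. Both rest on the fact that in the (linear) parking protocol underlying $\Pf(n,m)$, a car with preference $p$ only ever moves to spaces $\geq p$, and leaves if it passes space $m$. Throughout, $w\in[m]^n$ has non-decreasing rearrangement $b_1\le\cdots\le b_n$. Note that $b_j\ge k$ is equivalent to saying that at least $n-j+1$ cars prefer a space $\ge k$.

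\emph{Necessity.} Suppose $b_j > m-n+j$ for some $j$. Then the $n-j+1$ cars with preferences $b_j,\dots,b_n$ all prefer spaces in the interval $[m-n+j+1,m]$. Since cars only move forward, each of these cars can park only inside this interval. The interval has just $n-j$ spaces, so by pigeonhole at least one of these cars fails to park. Hence $w\notin\Pf(n,m)$.

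\emph{Sufficiency.} We argue by contraposition: assume some car fails to park, and produce an index $j$ with $b_j>m-n+j$. Look at the configuration at the end of the process. Let $k$ be the smallest index such that all spaces $k,k+1,\dots,m$ are occupied, so that either $k=1$ or space $k-1$ is empty at the end. (If space $m$ itself is empty, then no car could have failed, since a failing car passes through space $m$ and finds it occupied.)

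The key claim is that every car parked in $[k,m]$, as well as every failing car, has preference $\ge k$. Indeed, a car with preference $<k$ that reached $[k,m]$ or beyond would have driven past space $k-1$. Spaces never become free again, so space $k-1$ must already have been occupied at that moment, which contradicts its being empty at the end. This is the step requiring the most care; the point to state cleanly is exactly this monotonicity of occupation.

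Counting, at least $(m-k+1)+1=m-k+2$ cars prefer a space $\ge k$. In particular $n\ge m-k+2$. Set $j:=n-(m-k+2)+1=n-m+k-1$, which satisfies $j\ge1$. Since at least $n-j+1$ cars prefer a space $\ge k$, we get $b_j\ge k=m-n+j+1>m-n+j$. This violates the stated inequality and completes the contrapositive.
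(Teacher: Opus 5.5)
Your proof is correct and complete. The paper does not prove this statement: it quotes it from Kenyon--Yin \cite{kenyonparkingfunctions} and uses it as a black box, so there is no argument in the text to compare yours against.

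\emph{Necessity.} You observe that the $n-j+1$ largest preferences $b_j,\dots,b_n$ are confined to the window $[m-n+j+1,m]$, because cars only move right. This window has too few spaces, so pigeonhole forces a failure.

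\emph{Sufficiency.} You take the minimal $k$ such that spaces $k,\dots,m$ are all occupied at the end. Space $k-1$ stays empty, and spaces are never vacated. Together these force every car parked in $[k,m]$, and every failed car, to have preference $\ge k$. That gives at least $m-k+2$ such cars, hence $b_{n-m+k-1}\ge k$, which violates the bound.

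These are the standard pigeonhole and ``last gap'' arguments behind the classical $m=n$ criterion. They need no assumption $m\ge n$: your indices stay in range, since $1\le j=n-m+k-1\le n-1$.

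One cosmetic point in the necessity direction. When $m-n+j+1\le 0$ (possible only if $n>m$), the window $[m-n+j+1,m]$ contains only $m<n-j$ genuine spaces. So say ``at most $n-j$ spaces'' rather than ``just $n-j$ spaces''; the pigeonhole step is unaffected.

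A side benefit of your direct proof is that it shows membership in $\Pf(n,m)$ depends only on the multiset of preferences. This is exactly what the paper extracts from this proposition in part (i) of Lemma~\ref{lemma. properties parking generalized}.
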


The above proposition provides a useful criterion for determining whether a preference word $w \in [m]^n$ allows all $n$ cars to park successfully.

\begin{remark}
    By Proposition~\ref{Pro. all successfully} and Definition~\ref{def:y-parking-functions} of $\yy$-parking functions there is a connection between generalized parking functions and $\yy$-parking functions. Specifically, by setting $\yy = (m - n + 1, 1, 1, \dots, 1)$, we obtain the identity $\Pf(\yy) = \Pf(n,m)$. 
\end{remark}

From this point on, whenever the interpretation of the parking-protocol is used, we adopt the convention that $n$ denotes the number of cars and $m$ the number of parking spaces.

\begin{proposition}\label{Pro words on Dyck path successfully park}
    Let $\yy=(y_1,\dots,y_k)\in \mathbb{Z}_{> 0}$. For all $\yy$-parking functions $\sigma$, if $\sigma_j$ is the preference space for the $j$-th car in $m = \sum_{i=1}^{k}y_i$ available spaces, then all $n$ cars successfully park, and therefore $\Pf_n(\yy) \subseteq \Pf(n,m)$.
\end{proposition}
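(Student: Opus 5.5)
We reduce this to the Kenyon--Yin criterion, Proposition~\ref{Pro. all successfully}. Here the number of cars is $n$ and the number of spaces is $m=\sum_{i=1}^k y_i$; a $\yy$-parking function has length equal to the length of $\yy$, so $k=n$. Let $\sigma\in\Pf_n(\yy)$ and let $b_1\le\cdots\le b_n$ be its non-decreasing rearrangement. By Definition~\ref{def:y-parking-functions},
\[b_j\le y_1+\cdots+y_j \quad\text{for each } j.\]
We also have $1\le b_j\le m$, so $\sigma\in[m]^n$, which is the setting of Proposition~\ref{Pro. all successfully}. It therefore suffices to show $b_j\le m-n+j$ for every $j$.

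The key inequality uses that every entry of $\yy$ is a positive integer, so $y_i\ge 1$ for all $i$. For each $j$,
\[m-(y_1+\cdots+y_j)=\sum_{i=j+1}^{n} y_i \;\ge\; n-j.\]
Rearranging gives $y_1+\cdots+y_j\le m-n+j$. Combining this with $b_j\le y_1+\cdots+y_j$ yields $b_j\le m-n+j$ for all $j=1,\dots,n$. Proposition~\ref{Pro. all successfully} then says that all $n$ cars park in the $m$ spaces under the classical protocol, so $\sigma\in\Pf(n,m)$ and $\Pf_n(\yy)\subseteq\Pf(n,m)$.

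There is no real obstacle. The only points needing care are that positivity $y_i>0$ is used exactly to bound the tail sum below by $n-j$, and that the length of $\yy$ is matched with the number of cars $n$. One could also argue directly: at the moment car $j$ arrives, fewer than $n$ spaces are occupied, and the Pitman--Stanley condition guarantees a free space at or after its preference. Citing Proposition~\ref{Pro. all successfully} is cleaner, so I will use that.
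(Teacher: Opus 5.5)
Your proof is correct and matches the paper's argument. Both use $b_j\le y_1+\cdots+y_j = m-(y_{j+1}+\cdots+y_n)\le m-n+j$, which relies on $y_i\ge 1$, and then apply the Kenyon--Yin criterion. Your extra checks that $k=n$ and that $\sigma\in[m]^n$ are harmless details the paper leaves implicit.
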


\begin{proof}
    By Proposition \ref{Pro. all successfully}, it suffices to show that for any $\yy$-parking function $\sigma \in \Pf_n(\yy)$ whose non-decreasing rearrangement $b_1 \leq b_2 \leq \dots \leq b_n$ satisfy $b_i \leq m - n + i$.   By Definition~\ref{def:y-parking-functions},
    \[b_i \leq y_1 + \cdots + y_i.\]
    In addition, we have that each $y_i \geq 1$, therefore:
    \[y_1 + \cdots + y_i = m - (y_{i+1} + \dots + y_n) \leq m - n + i.\]
    This, combined with Proposition~\ref{Pro. all successfully}, completes the proof.
\end{proof}

\subsection{Lucky statistic on \texorpdfstring{$\yy$}--Parking Functions} \label{protocol gen parking functions}

We consider a generalization of the classical parking protocol, formulated in terms of words. 

\begin{protocol*}{Generalized classical parking protocol in $m$ spaces}
    Fix $l \in \{0,1\}$ and let $w \in [m]^r$ be a word of length $r$ over an alphabet of $m$ letters. We interpret $w = (w_1, \dots, w_r)$ as the parking preferences of $r$ cars that try to park in $m$ available spaces.
    \begin{itemize}
        \item If $l = 0$ all spaces are available,
        \item if $l = 1$ the first space is unavailable.
    \end{itemize}
    Each car $j \in [r]$ tries to park in its preferred space $w_j$. If that space is already occupied or is not available, the car moves to the next available space on the right. If no available space is found before reaching the end, the car is considered to have failed to park. 
\end{protocol*}

We denote by $\out_{m,l}(w)$ the output vector of the process, where $\out_{m,l}(w)_j$ represents the space where the $j$-th car parks, or $\varnothing$ if it fails to park.

We define the following associated statistics
\begin{itemize}
    \item The lucky set: $\Lucky_{m,l}(w) = \{ j \in [r] \mid \out_{m,l}(w)_j = w_j \}$.
    \item The number of lucky cars: $\lucky_{m,l}(w) = \#\Lucky_{m,l}(w)$.
\end{itemize}

Example \ref{Ex: pakring wituh first} shows the output in the special version of the parking protocol where the parameter $l=1$.

\begin{example}\label{Ex: pakring wituh first}
Consider the word $w = (6,3,1,6,2,7,6)$ with $m=8$ and $r = 7$, then the output under the parking protocol in $m$ spaces and $l=1$, then: 
$$\out_{m,1}(w) = (6,3,2,7,4,8,\varnothing)$$
See figure \ref{Fig. example of parking 3} to illustrate this process.
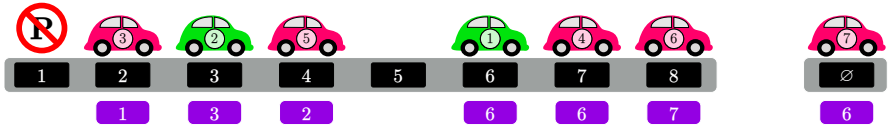
\begin{figure}[!htb]
    \centering
    \captionsetup{justification=centering}
    \resizebox{.8\linewidth}{!}{\begin{tikzpicture}

\draw [color = white, fill= color_gris, rounded corners=3pt] (-0.7,-0.35) rectangle (12.5,-1);
\draw [color = white, fill= color_gris, rounded corners=3pt] (14.1,-0.35) rectangle (15.65,-1);
\draw[fill=black, rounded corners=1pt] (-0.5,-0.45) rectangle (0.5,-0.9);
\draw[fill=black, rounded corners=1pt] (-0.5 + 1.5,-0.45) rectangle (0.5 + 1.5,-0.9);
\draw[fill=black, rounded corners=1pt] (-0.5 + 1.5*2 + 0.2*1,-0.45) rectangle (0.5 + 1.5*2 + 0.2*1,-0.9);
\draw[fill=black, rounded corners=1pt] (-0.5 + 1.5*3 + 0.2*2,-0.45) rectangle (0.5 + 1.5*3 + 0.2*2,-0.9);
\draw[fill=black, rounded corners=1pt] (-0.5 + 1.5*4 + 0.2*3,-0.45) rectangle (0.5 + 1.5*4 + 0.2*3,-0.9);
\draw[fill=black, rounded corners=1pt] (-0.5 + 1.5*5 + 0.2*4,-0.45) rectangle (0.5 + 1.5*5 + 0.2*4,-0.9);
\draw[fill=black, rounded corners=1pt] (-0.5 + 1.5*6 + 0.2*5,-0.45) rectangle (0.5 + 1.5*6 + 0.2*5,-0.9);
\draw[fill=black, rounded corners=1pt] (-0.5 + 1.5*7 + 0.2*6,-0.45) rectangle (0.5 + 1.5*7 + 0.2*6,-0.9);
\draw[fill=black, rounded corners=1pt] (-0.5 + 1.5*9 + 0.2*7,-0.45) rectangle (0.5 + 1.5*9 + 0.2*7,-0.9);

\node[text = white] at (0,-0.7) {$1$};
\node[text = white] at (1.5,-0.7) {$2$};
\node[text = white] at (3 + 0.2*1,-0.7) {$3$};
\node[text = white] at (4.5 + 0.2*2,-0.7) {$4$};
\node[text = white] at (6 + 0.2*3,-0.7) {$5$};
\node[text = white] at (7.5 + 0.2*4,-0.7) {$6$};
\node[text = white] at (9 + 0.2*5,-0.7) {$7$};
\node[text = white] at (10.5 + 0.2*6,-0.7) {$8$};
\node[text = white] at (13.5 + 0.2*7,-0.7) {$\varnothing$};

\begin{scope}[shift={(0,0.2)}, scale=0.7 ]
  \draw[fill=none, draw=red, line width=3pt] (0,0) circle (0.6cm);
  \node[scale = 0.8] at (0,0) {\Huge \textbf{P}};
  \draw[red, line width=3pt] (-0.45,0.45) -- (0.45,-0.45);
\end{scope}

\carroTikZ{color_car}{1.5}{0}{0.45}{0.00}{$3$}
\carroTikZ{color_car_luky}{1.5*2 + 0.2}{0}{0.45}{0.00}{$2$}
\carroTikZ{color_car}{1.5*3 + 0.2*2}{0}{0.45}{0.00}{$5$}
\carroTikZ{color_car_luky}{1.5*5 + 0.2*4}{0}{0.45}{0.00}{$1$}
\carroTikZ{color_car}{1.5*6 + 0.2*5}{0}{0.45}{0.00}{$4$}
\carroTikZ{color_car}{1.5*7 + 0.2*6}{0}{0.45}{0.00}{$6$}
\carroTikZ{color_car}{1.5*9 + 0.2*7}{0}{0.45}{0.00}{$7$}

\draw[color = white, fill=color_petroleo, rounded corners=3pt] (-0.5 + 1.5,-0.45 - 0.7) rectangle (0.5 + 1.5,-0.9 - 0.7);
\draw[color = white, fill=color_petroleo, rounded corners=3pt] (-0.5 + 1.5*2 + 0.2*1,-0.45 - 0.7) rectangle (0.5 + 1.5*2 + 0.2*1,-0.9 - 0.7);
\draw[color = white, fill=color_petroleo, rounded corners=3pt] (-0.5 + 1.5*3 + 0.2*2,-0.45 - 0.7) rectangle (0.5 + 1.5*3 + 0.2*2,-0.9 - 0.7);
\draw[color = white, fill=color_petroleo, rounded corners=3pt] (-0.5 + 1.5*5 + 0.2*4,-0.45 - 0.7) rectangle (0.5 + 1.5*5 + 0.2*4,-0.9 - 0.7);
\draw[color = white, fill=color_petroleo, rounded corners=3pt] (-0.5 + 1.5*6 + 0.2*5,-0.45 - 0.7) rectangle (0.5 + 1.5*6 + 0.2*5,-0.9 - 0.7);
\draw[color = white, fill=color_petroleo, rounded corners=3pt] (-0.5 + 1.5*7 + 0.2*6,-0.45 - 0.7) rectangle (0.5 + 1.5*7 + 0.2*6,-0.9 - 0.7);
\draw[color = white, fill=color_petroleo, rounded corners=3pt] (-0.5 + 1.5*9 + 0.2*7,-0.45 - 0.7) rectangle (0.5 + 1.5*9 + 0.2*7,-0.9 - 0.7);

\node[text = white] at (1.5,-0.7 - 0.7) {$1$};
\node[text = white] at (3 + 0.2*1,-0.7 - 0.7) {$3$};
\node[text = white] at (4.5 + 0.2*2,-0.7 - 0.7) {$2$};
\node[text = white] at (7.5 + 0.2*4,-0.7 - 0.7) {$6$};
\node[text = white] at (9 + 0.2*5,-0.7 - 0.7) {$6$};
\node[text = white] at (10.5 + 0.2*6,-0.7 - 0.7) {$7$};
\node[text = white] at (13.5 + 0.2*7,-0.7 - 0.7) {$6$};

\end{tikzpicture}}
    \caption{Example of parking protocol with the first space unavailable.}
    \label{Fig. example of parking 3}
\end{figure}
\end{example}

In the previous section, we established that for any subset $\mathcal{S}$ of weak compositions of $r$ into $n$ parts, we could associate a collection of $\yy$-extended permutations:
$$\ExtPerm{\mathcal{S}}{y}:=\bigsqcup_{\ss\in\mathcal{S}}\ExtPerm{\ss}{\yy}$$
and the polynomial
$$g_{\mathcal{S},l}(t,\yy) :=\sum_{\ss \in \mathcal{S}}\prod_{i=1}^n\bbinom{y_i t + \delta_{i,l}}{s_i}$$
where $l = 0,\ldots,n$ is a fixed parameter.
If $y_i \geq r + \delta_{i,l}$ for each $i$, then the magic coefficients of $g_{\mathcal{S},l}(t,\yy)$ admit a combinatorial interpretation via the block parking protocol. This interpretation relies on the assumption that $\yy$ is sufficiently large.

In this section we will show that the hypothesis that $y_i \geq 1$ is sufficient for the polynomial associated with the set $\mathcal{I}_n$ to admit a combinatorial interpretation over the set of $\yy$-parking functions via the parking protocol with the parameters $l = 0,1$.

\begin{theorem}\label{The. magic for a y-parking functions}
     Let $\mathbf{y} = (y_1, \dots, y_k) \in \mathbb{Z}_{>0}^k$. Consider the polynomials
     \begin{equation}\label{eq. g polynomial of y-parking}
         g_{\Pf_n(\yy),0}(t) := \sum_{\ss \in \mathcal{I}_{n}}\prod_{i=1}^n\bbinom{y_i t}{s_i} \quad \text{ and } \quad g_{\Pf_n(\yy),1}(t) :=\sum_{\ss \in \mathcal{I}_{n}}\bbinom{y_1 t + 1}{s_1}\prod_{i=2}^n\bbinom{y_i t}{s_i}.
     \end{equation}
     Fix $l\in \{0,1\}$, and call $c_j$ the coefficient of degree $j$ of $\mathscr{M}(g_{\Pf_n(\yy),l})$. The quantity $d_j := n!\cdot c_j$ counts the number of $\yy$-parking functions with exactly $j$ lucky cars under our parking protocol. In particular, $c_j\geq 0$.
\end{theorem}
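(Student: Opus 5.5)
The plan is to prove the polynomial identity
\[
n!\,\mathscr{M}(g_{\Pf_n(\yy),l})(t)\;=\;\sum_{\sigma\in \Pf_n(\yy)} t^{\lucky_{m,l}(\sigma)}, \qquad m=\textstyle\sum_i y_i,
\]
from which $d_j=n!\,c_j$ and $c_j\ge 0$ follow at once.

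\textbf{Left-hand side.} By Lemma~\ref{lemma:magic-multiplicative}, Corollary~\ref{coro:magic-binomials} and the computation in Lemma~\ref{lem. trasformacion polynomiasl}, the left side equals $\sum_{\ss\in\mathcal{I}_n} f_{\ss,l}(t,\yy)$. Proposition~\ref{prop. refine of f_s,y by pi and L} refines this further as
\[
\sum_{\ss\in\mathcal{I}_n}\ \sum_{\pi\in\mulPerm{\ss}}\ \sum_{L\subseteq[n]} \alg^{(\beta)}_l(\yy;\pi,L)\,t^{|L|}.
\]
By Remark~\ref{remk: y-parking to y-extended}, the right side is a sum over the same set $\bigsqcup_{\ss\in\mathcal{I}_n}\ExtPerm{\ss}{\yy}$. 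So it suffices to compare the two sums fibre by fibre, or at least after summing over suitable families of fibres.

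\textbf{Easy case: no saturation.} When all $y_i$ are large, $I_{\pi,l}=\varnothing$. Then Corollary~\ref{cor. full-capacity} identifies $\alg^{(\beta)}_l$ with the block-protocol lucky counts. The first task is to relate the block (cyclic) protocol to the linear protocol of this section. Consider a $\yy$-parking function whose cars never overflow a block, i.e.\ where at most $|Y_i|-\delta_{i,l}$ cars prefer block $Y_i$. Here I expect a bijection inside each block that rotates preferences, in the spirit of Pollak's circular argument. It should carry the cyclic-within-block outcome to the linear one while preserving the lucky set, because luckiness of car $j$ depends only on whether $\sigma_j$ is free at its arrival.

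\textbf{Hard case: saturation.} In general, some cars of block $Y_i$ arrive when $Y_i$ is saturated. In the linear protocol these cars spill rightwards into later blocks, and the $\yy$-parking condition together with Proposition~\ref{Pro words on Dyck path successfully park} guarantees that everyone parks. Algebraically, Proposition~\ref{prop. blucky-vs-alg} shows that the $\alg^{(\beta)}$ terms with $L\cap I_{\pi,l}\ne\varnothing$ do not vanish. They include negative factors $y_{\pi_j}-o_{j,l}(\pi)$ for $j\in I_{\pi,l}$, as well as positive factors $o_{j,l}(\pi)$, which may exceed $y_{\pi_j}$.

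The plan is to group these extra terms across different compositions $\ss\in\mathcal{I}_n$. The idea is to reassign each spilled car of block $i$ to the block where it actually parks, which turns the pair $(\pi,\ss)$ into $(\pi',\ss')$ with $\ss'\in\mathcal{I}_n$ obtained by moving mass from part $i$ to a later part. Then one shows two things:
\begin{itemize}
\item the signed excess terms telescope under this reassignment, via a sign-reversing involution extending the map $\vartheta_{j_0}$ from the proof of Proposition~\ref{prop. blucky-vs-alg};
\item the surviving terms count exactly the words whose linear-protocol lucky set is $L$.
\end{itemize}
Concretely, I would prove by induction on the number of cars $n$, adding the last car, that for fixed $\ss$ the two generating functions agree after summing over all $\ss$ with fixed partial sums up to the last saturated block. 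Here the dominance condition defining $\mathcal{I}_n$ is exactly what guarantees that overflow always finds room to the right.

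\textbf{Main obstacle.} I expect the crux to be this saturation/spill-over step. It is where the positivity of $\yy$ (rather than nonnegativity) must enter, as shown by Example~\ref{ex:PS 0 ys that is not magic positive}. It is also where the case $l=1$ requires care, because the unavailable first space shifts $o_{j,1}$ only in block $1$. My first attempt would be to construct an explicit algorithm on $\Pf_n(\yy)$ that preserves the number of lucky cars. This algorithm would move each overflowing car's preference into the block where it parks, and the task would be to check that it is a bijection onto the configurations counted by the $\alg^{(\beta)}$ terms.
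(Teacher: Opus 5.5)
There is a genuine gap, and it already appears in your ``easy case''. The identity cannot be proved fibre by fibre over the block pattern $\pi$ of $\sigma$, even when $\yy$ is so large that no block ever saturates. In the linear protocol a car whose search runs past the last space of its block enters the next block, although its own block still has free spaces. This changes the luckiness of later cars preferring that next block.

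Concretely, take $\yy=(3,3,3)$, $l=0$ and $\pi=(1,1,2)$. The $27$ words extending $\pi$ have linear-protocol generating function $18t^3+8t^2+t$, whereas $f_{\pi,0}(t,\yy)=(3t)(2t+1)(3t)=18t^3+9t^2$. The discrepancy is the word $(3,3,4)$: its lucky set is $\{1\}$ in the linear protocol but $\{1,3\}$ in the block protocol. The failure persists over the whole class $\ExtPerm{(2,1,0)}{\yy}$, which gives $54t^3+26t^2+t$ against $f_{(2,1,0),0}(t,\yy)=54t^3+27t^2$. A rotation of preferences inside each block maps each fibre to itself, so the lucky-set-preserving bijection you expect cannot exist.

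The ``hard case'' is not yet an argument. The map $\vartheta_{j_0}$ acts on subsets of $I_{\pi,l}$ for a single fixed $\pi$ and only relates $\alg^{(\beta)}_l$ to the \emph{block} protocol, so extending it says nothing about the linear protocol. The relabelling ``to the block where the car parks'' is not defined in a way that makes $\sigma$ an extension of the new label word. The proposed induction (``for fixed $\ss$ \dots after summing over all $\ss$'') is not a well-formed statement. Finally, for $l=1$ not every $\yy$-parking function parks completely: for $\yy=(1,1)$ and $\sigma=(1,1)$ the second car fails. So Proposition~\ref{Pro words on Dyck path successfully park} does not give ``everyone parks'' in that case.

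The missing idea is to avoid signed cancellation altogether. By inclusion--exclusion over subsets of $L$, it suffices to prove the $\alpha$-version (Proposition~\ref{Pro. refine igualdad dyck alpha}):
\[
\#\{\sigma\in\Pf_n(\yy):\Lucky_{m,l}(\sigma)\subseteq L\}=\sum_{\pi\in\Pf_n}\prod_{j\in L}y_{\pi_j}\prod_{j\notin L}o_{j,l}(\pi).
\]
Every term on the right is nonnegative. The paper proves this by double counting with Algorithm~\ref{Alg. count alg by lucky set}, in which $\pi$ is a \emph{virtual} labelling rather than the block pattern of $\sigma$. For $j\in L$ one picks $\sigma_j\in Y_{\pi_j}$. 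For $j\notin L$ one picks $\sigma_j$ among the spaces where earlier cars with the same label actually parked in the linear protocol (possibly in later blocks), plus space $1$ when $l=1$ and $\pi_j=1$. The argument then runs as follows:
\begin{enumerate}[(1)]
\item The choice for $j\notin L$ forces car $j$ to be unlucky, so $\Lucky_{m,l}(\sigma)\subseteq L$.
\item A reconstruction procedure recovers $\pi$ from $\sigma$, so distinct inputs give disjoint outputs.
\item The block composition only moves mass to the right. The hypothesis $y_i\ge 1$ is exactly what keeps it in $\mathcal{I}_n$ (Lemma~\ref{Lemma. preserve dominance}); the reverse procedure moves mass left and trivially preserves dominance.
\item Corollary~\ref{col. all parking susefol or las not} (only the last car can fail) makes the number of choices at step $j$ exactly $o_{j,l}(\pi)$.
\end{enumerate}
In the example above, for $L=\{1\}$ the word $(3,3,4)$ is produced from $\pi=(1,1,1)$, not from $(1,1,2)$. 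This mixing of fibres is precisely what a block-by-block bijection cannot capture.
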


This result clearly implies Theorem~\ref{thm:main-intro} since $\ehr_{\Pi_n(\yy)}(t) = g_{\Pf_n(\yy),1}(t)$, and is the main goal for the remainder of this article.  

\begin{example} \label{ex:lucky PS y=(21)}
For $\yy=(2,1)$ and the $\yy$-parking functions of Ex.~\ref{ex: y-parking}, the parking protocol with $l = 1$  in Figure~\ref{Fig. example of parking 4 magic} gives $d_0=2$, $d_1=4$ and $d_2=2$. Indeed, in agreement with Ex.~\ref{ex:Ehr PS2},
\[
2\cdot \ehr_{\Pi_2(2,1)}(t) = 2\cdot g_{\Pf_2(2,1),1}(t) = 2t^0(1+t)^2 + 4t^1(1+t)^1+2t^2(1+t)^0.
\]

\begin{figure}
    \centering
    \captionsetup{justification=centering}
    \resizebox{\linewidth}{!}{\input{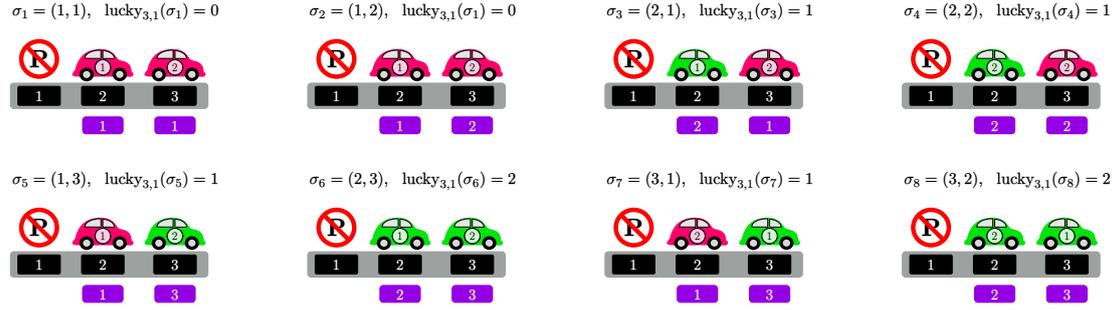}}
    \caption{The parking protocol is illustrated for each $\yy$-parking function $\sigma_i \in \Pf_2(2,1)$, where the green cars are lucky cars.}
    \label{Fig. example of parking 4 magic}
\end{figure}
\end{example}

As in the previous section, such an interpretation requires a precise control of the parking protocol: we must understand under which conditions a preference word allows all cars to park successfully.

For $\mathbf{y}$-parking functions (and, more generally, for generalized parking functions), Proposition~\ref{Pro words on Dyck path successfully park} ensures all cars park when $l=0$. In order to treat the case $l=1$, we need a slightly weaker but still sufficient fact: under this modified protocol, a word  $w$  in $\Pf(n,m)$  allows all cars to park except possibly the last one. The following lemma collects the properties that will be useful in proving this fact.

\begin{lemma}\label{lemma. properties parking generalized}
    Let $w = (w_1,\dots,w_n) \in \Pf(n,m)$ be a generalized parking function, and let $w' = (w_{i_1},\dots,w_{i_n})$ be a permutation of its entries. Then the following properties hold for the parking protocol with parameter $l = 1$:
    \begin{enumerate}[\normalfont(i)]
        \item \label{it:lemma. properties parking generalized I} $w'$ is also a generalized parking function.
        
        \item \label{it:lemma. properties parking generalized II} All cars with preference word $w$ park successfully if and only if all cars with preference word $w'$ park successfully.
        
        \item \label{it:lemma. properties parking generalized III} If a prefix $(w_1,\dots,w_j)$ contains a car that fails to park, then any permutation $(w_{i_1},\dots,w_{i_j})$ of this prefix also contains a car that fails to park.
        
        \item \label{it:lemma. properties parking generalized IV} Suppose that when $l=0$, all cars with preferences in the prefix $(w_1,\dots,w_j)$ park successfully, but when $l=1$ some car fails to park. Then there exists an index $i \leq j$ such that $w_i = 1$.
    \end{enumerate}
\end{lemma}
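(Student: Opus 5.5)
The natural tool is a monotonicity principle for the linear parking protocol: enlarging the set of occupied or unavailable spaces can only enlarge the set of occupied spaces after each subsequent car. I would prove this as a preliminary claim and then deduce the four items. The claim: let $U\subseteq U'\subseteq[m]$ be initial sets of blocked spaces, and run the same prefix of cars from both. Then after each step the occupied set from $U$ is contained in the occupied set from $U'$. If moreover every car succeeds from $U'$, the same holds from $U$. The proof is by induction on the number of cars. A car with preference $p$ parks at the first free space $\ge p$. If that space $q'$ in the $U'$-configuration exists, then the $U$-configuration also has a free space $\le q'$, because $q'$ is free in the $U'$-configuration and hence in the smaller $U$-configuration. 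So the car parks at some $q\le q'$. The part of the interval $[p,q']$ that is occupied in the $U$-configuration, together with $q$, is contained in the occupied set of the $U'$-configuration together with $q'$. This keeps the containment invariant.

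For item (i) I would use Proposition~\ref{Pro. all successfully}: membership in $\Pf(n,m)$ depends only on the non-decreasing rearrangement, which $w$ and $w'$ share. For items (ii) and (iii) I would use the standard fact that, in the linear protocol, the \emph{number} of cars that park (and the final occupied set) does not depend on the order of arrival. The cleanest route is the characterization of success as a counting condition. With $l=1$, running $w$ is equivalent to running the word $(1,w_1,\dots,w_n)$ with $l=0$, where an extra car first occupies space $1$. All cars park if and only if, for every $k$, the number of entries $\ge m-k+1$ (counting blocked spaces as already-parked cars) is at most $k$. This is the mirror of Proposition~\ref{Pro. all successfully} and is visibly symmetric in the entries of $w$. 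Applying this criterion to the prefix $(w_1,\dots,w_j)$ in place of $w$ gives (iii) directly, and taking the full word gives (ii). If I want to avoid reproving that criterion, the alternative is to show that swapping two adjacent cars leaves the final occupied set unchanged. This is checked by cases, according to whether their search intervals overlap, and permutations are generated by adjacent swaps.

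For item (iv) I would argue by contrapositive. Suppose no $w_i$ with $i\le j$ equals $1$. Then under $l=0$ no car of the prefix ever probes space $1$, since cars only move rightwards from preferences $\ge2$. So space $1$ stays empty throughout. The $l=0$ and $l=1$ runs are then identical step by step: every car's search interval avoids space $1$, so declaring it unavailable changes nothing. Hence every car parks under $l=1$ as well, contradicting the hypothesis.

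The main obstacle I expect is (ii)/(iii), namely establishing order-independence rigorously. Items (i) and (iv) are essentially immediate. I would first try the adjacent-transposition argument together with the monotonicity claim. If the case analysis becomes messy, I would fall back on the explicit counting criterion above, proved by the same right-to-left counting argument as Proposition~\ref{Pro. all successfully}.
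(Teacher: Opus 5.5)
Your proposal is correct, and items (i) and (iv) match the paper's proof. For (ii) the paper reduces $l=1$ to $l=0$ differently. It compresses the lot, replacing $w$ by $p^{(w)}\in[m-1]^n$ with $p^{(w)}_i=1$ if $w_i=1$ and $p^{(w)}_i=w_i-1$ otherwise. It observes that $\out_{m,1}(w)=\out_{m-1,0}(p^{(w)})+1$. Since $p^{(w')}$ is a rearrangement of $p^{(w)}$, it then applies (i) on $m-1$ spaces. Item (iii) is deduced from (ii) by noting that every prefix lies in $\Pf(j,m)$.

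Your dummy-car reduction (prepend a car preferring $1$) is just as short, and nothing needs reproving. The counting condition you wrote is exactly Proposition~\ref{Pro. all successfully} applied to the word $(1,w_1,\dots,w_n)\in[m]^{n+1}$: the inequality $b_j\le m-(n+1)+j$ is equivalent to $\#\{\text{entries}\ge m-k+1\}\le k$ with $k=n+1-j$.

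Your adjacent-transposition alternative is genuinely more self-contained. It shows that the final occupied set, and hence the number of failed cars, is invariant under reordering for any initial set of blocked spaces. This gives (ii) and (iii) at once, without invoking Kenyon--Yin and without needing $w\in\Pf(n,m)$ or the prefix to be a generalized parking function. The two-car check has two cases. If $q_a\neq q_b$, each car keeps its own spot in either order. If $q_a=q_b=q$, the second car goes to the first free space after $q$ in either order. A failing car is handled the same way.

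The monotonicity claim in your first paragraph is never used by any of these arguments, so you can drop it. As written it also omits the case where the car fails in the $U'$-configuration, though that case is easy since then $[p,m]$ is already occupied there.
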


\begin{proof}\leavevmode
    \begin{enumerate}[\normalfont(i)]
    \item This follows immediately from Proposition~\ref{Pro. all successfully}. Indeed, if $w = (w_1,\dots,w_n) \in \Pf(n,m)$, then any permutation $w' = (w_{i_1}, \dots , w_{i_n})$ also lies in $\Pf(n,m)$, since both have the same nondecreasing rearrangement.
    
    \item Let $w \in [m]^n$, and define $p^{(w)} \in [m-1]^n$ by
    \[
    p^{(w)}_i =\begin{cases}
        1 & \text{if } w_i = 1, \\
        w_i - 1 & \text{if } w_i > 1.
    \end{cases}
    \]
    Since the first parking space is unavailable, all cars with preferences given by $w$ search for a space in $\{2,\dots,m\}$ in exactly the same way as cars with preferences $p^{(w)}$ search in $\{1,\dots,m-1\}$. Therefore, the parking process for $w$ with $l=1$ is equivalent to that for $p^{(w)}$ with $l=0$, in the sense that
    \[
    \out_{m,1}(w) = \out_{m-1,0}(p^{(w)}) + 1, \quad \text{where } \varnothing + 1 = \varnothing.
    \]
    Thus, all $n$ cars park successfully with preference word $w$ and $l=1$ if and only if all cars park successfully with preference word $p^{(w)}$ and $l=0$. The claim then follows from \ref{it:lemma. properties parking generalized I}.
    
    \item This follows immediately from \ref{it:lemma. properties parking generalized II}, since any prefix is itself a generalized parking function in $\Pf(j,m)$.
    
    \item If no such index exists, then no entry in the prefix equals $1$, and hence the parking process with $l=1$ coincides with that with $l=0$, contradicting the hypothesis. \qedhere
    \end{enumerate}
\end{proof}

\begin{proposition}\label{pro. y-parking sucesufull or last no}
    Let $w \in \Pf(n,m)$ be a generalized parking function. Consider the parking protocol with $m$ spaces and $l=1$ (i.e., the first space is unavailable). Then either all cars park successfully, or all cars except possibly the last one do.
\end{proposition}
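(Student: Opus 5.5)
Our approach is to reduce the case $l=1$ to the case $l=0$ with one fewer space, using the map $w\mapsto p^{(w)}$ from the proof of Lemma~\ref{lemma. properties parking generalized}\ref{it:lemma. properties parking generalized II}. Recall that $\out_{m,1}(w)=\out_{m-1,0}(p^{(w)})+1$. So it suffices to show that the first $n-1$ entries of $p^{(w)}$ form a generalized parking function in $\Pf(n-1,m-1)$. Indeed, the parking outcome of the first $n-1$ cars depends only on the prefix $(w_1,\dots,w_{n-1})$, since later cars do not affect earlier ones. If the prefix $p^{(w)}_{[n-1]}:=(p^{(w)}_1,\dots,p^{(w)}_{n-1})$ lies in $\Pf(n-1,m-1)$, then all of the first $n-1$ cars park under the protocol with $l=0$ in $m-1$ spaces. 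Hence they also park under $l=1$ in $m$ spaces, and only the last car might fail.

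To verify membership, we use the criterion of Proposition~\ref{Pro. all successfully}. Let $b_1\le\cdots\le b_n$ be the nondecreasing rearrangement of $w$, so that $b_j\le m-n+j$ for all $j$. The prefix $(w_1,\dots,w_{n-1})$ is obtained from $w$ by deleting one entry. Its sorted version $b'_1\le\cdots\le b'_{n-1}$ therefore satisfies $b'_j\le b_{j+1}\le m-n+j+1$: deleting one element shifts the order statistics up by at most one index. Next, the map $x\mapsto \max(x-1,1)$ is weakly increasing, so it commutes with sorting. Thus the sorted entries of $p^{(w)}_{[n-1]}$ are $\max(b'_j-1,1)$. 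When $b'_j\ge2$, this equals $b'_j-1\le m-n+j=(m-1)-(n-1)+j$. When $b'_j=1$, it equals $1$, which is at most $(m-1)-(n-1)+j$ because $m\ge n$ (since $\Pf(n,m)$ is nonempty only when $m\ge n$, because $b_1\le m-n+1$ forces $m\ge n$). This is exactly the condition of Proposition~\ref{Pro. all successfully} for $\Pf(n-1,m-1)$.

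The main point needing care is the first step: justifying that the deleted-entry order statistics satisfy $b'_j\le b_{j+1}$, and that the outcome for the first $n-1$ cars under $l=1$ agrees with the outcome for the truncated word. Both are elementary. The second follows because the protocol is sequential. The first follows because, among $b_1,\dots,b_{j+1}$, at least $j$ survive the deletion, so $b'_j$ is at most their maximum. If desired, Lemma~\ref{lemma. properties parking generalized}\ref{it:lemma. properties parking generalized IV} gives an alternative viewpoint: a failure under $l=1$ requires an entry equal to $1$. However, the direct counting argument above already suffices, and we will present that.
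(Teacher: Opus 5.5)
Your argument is correct, and it takes a genuinely different route from the paper's.

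The paper argues dynamically from the first car $j$ that fails under $l=1$:
\begin{itemize}
\item If $w_j=1$, that car has passed over all $m-1$ available spaces, so $j\ge m\ge n$ and hence $j=n$.
\item If $w_j>1$, Lemma~\ref{lemma. properties parking generalized}\ref{it:lemma. properties parking generalized IV} supplies an earlier entry equal to $1$, which is swapped into position $j$. Parts \ref{it:lemma. properties parking generalized I} and \ref{it:lemma. properties parking generalized III} ensure the new word is still in $\Pf(n,m)$ and still has a failure among its first $j$ cars. The swap is repeated until the first failing car prefers space $1$.
\end{itemize}

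You instead make the reduction $w\mapsto p^{(w)}$ the main tool; in the paper it only appears inside the proof of part \ref{it:lemma. properties parking generalized II}. You then check the Kenyon--Yin criterion of Proposition~\ref{Pro. all successfully} directly on the truncated word, using the order-statistics bound $b'_j\le b_{j+1}$ and the monotonicity of $x\mapsto\max(x-1,1)$.

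What your version buys:
\begin{itemize}
\item It is static and non-iterative. It avoids the termination of the swapping chain $j_k\le\cdots\le j_1\le j$, which the paper only sketches.
\item It needs none of parts (i), (iii), (iv) of the lemma.
\item It gives a sharper statement for free. Run your computation on the whole word $p^{(w)}$ instead of its prefix. The only inequality that can fail is the one at $j=1$ when $b_1=1$, namely $1\le m-n$. So all $n$ cars park whenever $m>n$. When $m=n$, only $n-1$ spaces are available, so the last car necessarily fails. Failure thus occurs exactly when $m=n$.
\end{itemize}
In the paper this dichotomy is only implicit, hidden in the inequality $j\ge m$ of the case $w_j=1$.
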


\begin{proof}
    Assume that $w \in \Pf(n,m)$ does not yield a successful parking for all cars under the protocol with $l=1$. Let $j \in [n]$ be the index of the first car that fails to park.

    We first consider the case $w_j = 1$. Since cars only move to the right, if the $j$-th car fails to park with preference $1$, then all the remaining $m-1$ available spaces must already be occupied when it arrives; therefore since the previous $j-1$ cars have already parked, we must have $j\geq m$.
    Since $w \in \Pf(n,m)$ is only meaningful for $m \geq n$, and we obviously have that $j \leq n$, we conclude that it must be $j = n$. Thus, in this case, only the last car can fail to park.

    Now we consider $w_j > 1$. By Lemma~\ref{lemma. properties parking generalized}\ref{it:lemma. properties parking generalized IV}, we know that there exists $i < j$ such that $w_i = 1$. Suppose that $w_i = 1$ is the first occurrence of $1$ in $w$. Then consider the generalized parking function $w^{(1)} \in \Pf(n,m)$, which consists of swapping the entries $i$ and $j$, that is, $w^{(1)}_i = w_j$ and $w^{(1)}_j = w_i = 1$.
    
    We know by Lemma~\ref{lemma. properties parking generalized}\ref{it:lemma. properties parking generalized I} that $w^{(1)}$ is well-defined, and also, since the prefix $(w_1,\dots,w_j)$ means that the first car that fails to park is $w_j$, then by Lemma~\ref{lemma. properties parking generalized}\ref{it:lemma. properties parking generalized III}, the prefix 
    \[(w_1,\dots,w_{i-1},w_j,w_{i+1},\dots,w_{j-1},1),\] 
    there is also a car that fails to park. 

    Let $j_1$ be the index of the first car that fails to park on their desired location, we know that since $i$ was the first occurrence of $1$ in $w$, then for all $w^{(1)}_k$ with $k \leq i$ it holds that $w^{(1)}_k > 1$, so the parking protocol with $l=1$ is the same as with $l=0$, and since $w^{(1)}$ is a generalized parking function, all $i$ cars can park successfully, then $i < j_1 \leq j$.

    If $w^{(1)}_{j_1} = 1$, from the previous case we know that $j_1 = n$, since $j_1 \leq j$, then $j=n$, so the $n$-th car is the first to fail for $w$.

    Otherwise, apply the same reasoning again for $w^{(1)}_{j_1}$. At some time we will end with a chain $j_k \leq j_{k-1} \leq \cdots \leq j_{1} \leq j$. This forces $w^{(k)}_{j_k} = 1$, concluding that $j = n$.
\end{proof}

\begin{corollary}\label{col. all parking susefol or las not}
     Let $\yy=(y_1,\dots,y_k)\in \mathbb{Z}_{>0}^k$. Consider the modified parking protocol with $m = \sum_{i=1}^{n}y_i$ spaces and fix $l=1$. Then, under this protocol, either all cars park successfully, or all cars except the last one park successfully.
\end{corollary}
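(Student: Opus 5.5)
The plan is to reduce the statement to the two results just proved. The first is Proposition~\ref{Pro words on Dyck path successfully park}, which says that every $\yy$-parking function is a generalized parking function in $m=\sum_i y_i$ spaces. The second is Proposition~\ref{pro. y-parking sucesufull or last no}, which controls the $l=1$ protocol on $\Pf(n,m)$. The corollary is implicitly about preference words that are $\yy$-parking functions, since those are the words to which the modified protocol is applied in Theorem~\ref{The. magic for a y-parking functions}. I will therefore fix $\sigma\in\Pf_n(\yy)$ and regard it as a word in $[m]^n$.

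\textbf{Step 1.} Since $\yy\in\mathbb{Z}_{>0}^k$, each $y_i\ge 1$. Proposition~\ref{Pro words on Dyck path successfully park} then gives $\sigma\in\Pf(n,m)$. The only thing to check is the bookkeeping: the number of blocks matches the length $n$, so $k=n$ as in the definition of $\Pf_n(\yy)$. This yields $m\ge n$, which is the standing assumption used in Proposition~\ref{pro. y-parking sucesufull or last no}.

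\textbf{Step 2.} Apply Proposition~\ref{pro. y-parking sucesufull or last no} to $\sigma$ with $l=1$. It gives the dichotomy directly: either every car parks, or the first failing car has index $j=n$, so every car except the last one parks. No further case analysis is needed, because that proposition already shows the first failure, if any, occurs at position $n$.

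\textbf{Main obstacle.} The substantive difficulty lies entirely in Proposition~\ref{pro. y-parking sucesufull or last no}, whose swapping argument is taken as given here. What remains is to make sure Step 1 really places $\sigma$ in $\Pf(n,m)$ with the same $m$ used by the protocol. This means checking the inequality $y_1+\cdots+y_i\le m-n+i$, which uses $y_{i+1},\dots,y_n\ge 1$, and this is exactly where positivity of $\yy$ enters. Example~\ref{ex:PS 0 ys that is not magic positive} shows that zero entries can break the later magic-positivity statement, so I expect positivity to be genuinely needed rather than an artefact of the proof.
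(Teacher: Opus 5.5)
Your proposal is correct and is the paper's argument: the paper's proof likewise combines the inclusion $\Pf_n(\yy)\subseteq\Pf(n,m)$ from Proposition~\ref{Pro words on Dyck path successfully park} (where positivity of $\yy$ enters) with Proposition~\ref{pro. y-parking sucesufull or last no}. Your bookkeeping ($k=n$, hence $m\ge n$) simply makes explicit what the paper leaves implicit.
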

\begin{proof}
This follows immediately from the previous proposition and the inclusion
$\Pf_n(\yy) \subseteq \Pf(n,m)$ for $m=\sum_i y_i$.
\end{proof}

With this result from now on we can assume that for every $\yy$-parking function then the parking protocol with $l=1$, the only car that fails to park is the last one.

To prove Theorem \ref{The. magic for a y-parking functions}, as in the development of the block parking protocol, we will work with the magic polynomial
\[{f}_{\ss,l}(t,\yy) = n! \cdot \mathscr{M}({g}_{\ss,l}(t,\yy)) \text{ for all }\ss \in \mathcal{I}_n.\]

This formulation allows us to recover the role of the algebraic contributions introduced in Definition \ref{def. algebraic contributions}, and to relate them to concrete combinatorial objects.

\begin{guide*}{Completing Step (iii)}{}\label{guide2}
The proof of Theorem~\ref{The. magic for a y-parking functions}, and along with it also the completion of Step (iii), will be achieved as follows:

    \begin{enumerate}[Step (a)]
\item We refine Theorem \ref{The. magic for a y-parking functions} by introducing a finer statistic determined by a fixed subset of indices $L \subseteq [n]$. This refinement serves as an intermediate step to connect the problem with the algebraic contributions.

\item We define an algorithm that allows us to interpret the quantities $\alg_{l}^{(\alpha)}(\yy ; \pi,L)$ as the number of all possible outputs of this algorithm with inputs $\pi$ and $L$.

\item We show that this algorithm behaves well on the set of $\yy$-parking functions, allowing it to be used as a double counting tool.

\item Finally, by combining the tools developed in the previous steps, we conclude the equality obtained in Step (a), thereby completing the proof of the theorem.
\end{enumerate}
\end{guide*}

\subsection{Lucky statistic on a subset of lucky cars} \label{ssec: lucky subset lucky cars}
From now on, we fix $l \in \{0,1\}$ and let $\yy = (y_1,\dots,y_n)$ be a vector of positive integers, that is, $y_i \geq 1$ for each $i \in [n]$. Whenever required by the context, we set $m := \sum_{i=1}^{n} y_i$.

\begin{definition} Let $L \subseteq [n]$ be a subset of car indices. We define the following quantities:
\begin{itemize}
    \item The number of $\yy$-parking functions whose lucky set is contained in $L$:
    $$\lucky^{(\alpha)}_{l}(\yy; L) := \#\{ w \in \Pf_n(\yy) \mid \Lucky_{m,l}(w) \subseteq L \}.$$
    \item The number of $\yy$-parking functions whose lucky set is exactly $L$:
    $$\lucky^{(\beta)}_{l}(\yy; L) := \#\{ w \in \Pf_n(\yy) \mid \Lucky_{m,l}(w) = L \}.$$
\end{itemize}

\end{definition}

See Figure~\ref{Fig. example of parking 4 magic} and Table~\ref{tab: lucky parking alpha and beta} for an example for $n=2$.

\begin{table}[hbt]

$$
\begin{array}{crcrc} \hline 
    L & \operatorname{lucky}^{(\beta)}_{l}(\yy;L) & \mathbf{|_{y_1=2,y_2=1}} &\operatorname{lucky}^{(\alpha)}_{l}(\yy;L)  & \mathbf{|_{y_1=2,y_2=1}} \\ \hline
    \varnothing & 2 & 2& 2 & 2  \\
    \{1\} & 2y_1+y_2-2 & 3 & 2y_1+y_2 & 5\\
    \{2\} & y_1+y_2-2 & 1 & y_1+y_2 & 3\\
    \{1,2\} &y^2_1 + 2y_1 y_2 - 3y_1 - 2y_2 + 2 &2 &  y_1^2+2y_1y_2 &8\\ \hline
    \end{array}
$$
\caption{Number of $(y_1,y_2)$-parking functions with lucky set equal   to (contained in) $L$.}
\label{tab: lucky parking alpha and beta}
\end{table}

Recall that, given vectors of non-negative integers $\ss$ and $\yy$, the polynomial $f_{\ss,l}(t,\yy)$ is defined in Equation \eqref{eq. polynomial of algebraic lucky in}, and that the polynomials $g_{\Pf_n(\yy),l}(t)$ appear in Theorem \ref{The. magic for a y-parking functions}. Hence, we have the following observation.

\begin{remark}
    The polynomials $g_{\Pf_n(\yy),l}(t)$ can be expressed in terms of the polynomials $f_{\ss,l}(t,\yy)$ defined in Equation \eqref{eq. polynomial of algebraic lucky in}. More precisely, using Proposition \ref{prop. refine of f_s,y by pi and L}, we can write them in terms of algebraic contributions indexed by subsets $L \subseteq [n]$ and classical parking functions $\pi \in \Pf_n$:
    \begin{align}
        f_{\Pf_n(\yy),l}(t)
        &:= \sum_{\ss \in \mathcal{I}_{n}} f_{\ss,l}(t,\yy) \\
        &= \sum_{\ss \in \mathcal{I}_{n}} \sum_{L \subseteq [n]} \sum_{\pi \in \mulPerm{\ss}} \alg_{l}^{(\beta)}(\yy ; \pi, L)\, t^{|L|} \\
        &= \sum_{\pi \in \Pf_n} \sum_{L \subseteq [n]} \alg_{l}^{(\beta)}(\yy ; \pi, L)\, t^{|L|}.
    \end{align}
    Here, the regrouping of the sums over $\ss \in \mathcal{I}_n$ and $\pi \in \mulPerm{\ss}$ follows from the identity $\bigsqcup_{\ss}\mulPerm{\ss} = \Pf_n$.
\end{remark}

To prove Theorem \ref{The. magic for a y-parking functions}, it is therefore sufficient to establish the following proposition, which relates lucky statistics to algebraic contributions.

\begin{proposition}\label{Pro. refine igualdad dyck} 
Let $L \subseteq [n]$ be a subset of car indices. Then the following equality holds:
\begin{equation}
    \lucky^{(\beta)}_{l}(\yy; L) = \sum_{\pi \in \Pf_n}\alg_{l}^{(\beta)}(\yy ; \pi, L)
\end{equation}
\end{proposition}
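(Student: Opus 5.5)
By Lemma~\ref{lemma. relation into beta and alpha alg}, the algebraic contributions satisfy the same Möbius relations over the Boolean lattice of subsets of $[n]$ as the lucky counts do (Remark~\ref{remk. blucky inclusion exclusion}, whose proof only uses that $\lucky^{(\alpha)}$ counts lucky sets contained in $L$). So the plan is to prove the $(\alpha)$-version
\[
\lucky^{(\alpha)}_{l}(\yy;L)\;=\;\sum_{\pi\in\Pf_n}\alg^{(\alpha)}_{l}(\yy;\pi,L)\;=\;\sum_{\pi\in\Pf_n}\prod_{j\in L}y_{\pi_j}\prod_{j\notin L}o_{j,l}(\pi),
\]
and then deduce the $(\beta)$-version by Möbius inversion. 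This is Step (a) of the guide.

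For the $(\alpha)$-identity I would set up a double count (Steps (b) and (c)). For each $\pi\in\Pf_n$, the right-hand side counts the data $(\pi,(\chi_j)_{j\in[n]})$ where:
\begin{itemize}
\item for $j\in L$, the choice $\chi_j$ is an element of $Y_{\pi_j}$;
\item for $j\notin L$, the choice $\chi_j$ is one of the $o_{j,l}(\pi)$ "obstructions" in block $\pi_j$. These are the earlier cars $j'<j$ with $\pi_{j'}=\pi_j$, together with the unavailable first space when $l=1$ and $\pi_j=1$.
\end{itemize}
I would define an algorithm that reads the cars in order and outputs a preference word $w$:
\begin{itemize}
\item a car $j\in L$ prefers the space it chose;
\item a car $j\notin L$ prefers the space where its chosen obstructing car $j'$ actually parked under the linear protocol on the partial word $(w_1,\dots,w_{j-1})$, or space $1$ if it chose the unavailable space.
\end{itemize}
By construction every $j\notin L$ prefers an occupied or unavailable space, so it is unlucky and $\Lucky_{m,l}(w)\subseteq L$.

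The content of Step (c) is to show that this algorithm is a bijection onto $\{w\in\Pf_n(\yy):\Lucky_{m,l}(w)\subseteq L\}$. Two things must be checked. First, the output is a $\yy$-parking function. Here I would compare the increasing rearrangement of $w$ with that of the extension of $\pi$, using that a car redirected to an occupied space only moves its preference to a value at least as large as an earlier car of the same block, but never past the block-count constraint. I would also use Lemma~\ref{lemma. properties parking generalized} and Corollary~\ref{col. all parking susefol or las not}, which say that for $l=1$ only the last car can fail; this guarantees the parked positions $\out_{m,l}$ needed by the algorithm exist at every step $j<n$. Second, the map is invertible. Given $w$, I would reconstruct $\pi$ and the choices by running the protocol. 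A lucky-or-$L$ car $j$ determines $\pi_j$ as the block containing $w_j$. An unlucky car $j\notin L$ hits an occupied space whose occupant $j'$ is unique, and I set $\pi_j:=\pi_{j'}$; the case $w_j=1$ with $l=1$ gives $\pi_j=1$. I would then verify that the resulting $\pi$ lies in $\Pf_n$ and that the $\mu$-counts match, so the number of preimage options for car $j$ is exactly $o_{j,l}(\pi)$.

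A subtlety must be handled explicitly. When $j\in L$ is unlucky in the output (it chose an occupied space), the inverse must still read $\pi_j$ from the block of $w_j$ rather than from the occupant. The two cases "$j\in L$" and "$j\notin L$" therefore give different decoding rules, and the inverse only needs to be correct for fixed $L$, which is exactly our setting.

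I expect the main obstacle to be the verification that the output word is a genuine $\yy$-parking function, and conversely that the decoded $\pi$ is a classical parking function. This is where the dominance condition defining $\mathcal{I}_n$ must be matched against the prefix behaviour of the linear protocol. For $l=1$ there is the extra complication that the last car may fail to park; I would treat that case using Proposition~\ref{pro. y-parking sucesufull or last no}, noting that a failing last car is unlucky and contributes exactly one obstruction option. Step (d) then just assembles these counts over $\pi\in\Pf_n$ and applies the Möbius inversion described above.
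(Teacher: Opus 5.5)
Your strategy is the paper's. You reduce to the $(\alpha)$-identity by M\"obius inversion and produce words by letting a car $j\notin L$ copy the spot of an earlier car of type $\pi_j$ (or the blocked space $1$), i.e.\ $\sigma_j\in O^{(j-1)}_{\pi_j}$. You then invert by reading $\pi_j$ from the block of $w_j$ when $j\in L$ and from the occupant of $w_j$ when $j\notin L$.

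The easy half of your verification is fine. Write $\kappa(q)$ for the index with $q\in Y_{\kappa(q)}$. Induction on $k$ gives $\pi_k\le\kappa(w_k)$, because a copied spot lies weakly to the right of the copied car's preference. Hence $\#\{k:\pi_k\le i\}\ge\#\{k:\kappa(w_k)\le i\}\ge i$, and the decoded $\pi$ lies in $\Pf_n$.

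The gap is the other half, which you only flag: that \emph{every} output, for every $\pi\in\Pf_n$ and every sequence of choices, is a $\yy$-parking function. Your hint (``never past the block-count constraint'') just restates this. More importantly, it never uses $y_i\ge 1$, and without that hypothesis the claim is false. For $\yy=(1,0,2)$, $l=1$, $\pi=(1,1,1)$, $L=\varnothing$, the algorithm can output $(1,2,3)\notin\Pf_3(\yy)$ (Example~\ref{ex:failure algorithm zero yi}). This is the content of Lemma~\ref{Lemma. preserve dominance}. You also need it \emph{before} you can invoke Corollary~\ref{col. all parking susefol or las not} to get $\#O^{(j-1)}_{\pi_j}=o_{j,l}(\pi)$, since that corollary only applies to $\yy$-parking functions.

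Here is one way to supply the missing step.
\begin{enumerate}[(a)]
\item Put $P_i:=Y_1\cup\dots\cup Y_i$ and $B_i:=\{k:\sigma_k\in P_i\}$. Prove $|B_i|\ge i$ by induction on $i$.
\item Since $\kappa(\sigma_k)\ge\pi_k$, you have $B_i\subseteq\{k:\pi_k\le i\}$, a set of size at least $i$.
\item If the inclusion is strict, follow copied spots backwards from a car in the difference. A car of type $\le i$ outside $B_i$ is not in $L$ and did not copy space $1$. So the chain ends at a car $k^*$ with $\sigma_{k^*}\in P_i$ that parked outside $P_i$.
\item Let $g$ be the largest free available position of $P_i$ when $k^*$ arrives, with $g=0$ if there is none. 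Every car parked in $(g,\max P_i]$ at that moment, and $k^*$ itself, prefers a spot in $(g,\max P_i]$; otherwise it would have stopped at $g$.
\item If $g=0$, this gives $|B_i|\ge \max P_i\ge i$: the blocked space costs at most one position and $k^*$ adds one.
\item If $g\in Y_h$, it gives at least $y_{h+1}+\dots+y_i+1\ge i-h+1$ such cars. These are disjoint from the $|B_{h-1}|\ge h-1$ cars preferring a spot in $P_{h-1}\subseteq[1,g-1]$, so again $|B_i|\ge i$.
\end{enumerate}
Positivity of $\yy$ enters exactly in $\max P_i\ge i$ and $y_{h+1}+\dots+y_i\ge i-h$.

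Your remark on a failing last car is also off. Car $n\notin L$ has $o_{n,l}(\pi)$ options, not one. Its failure is harmless simply because those options depend only on where cars $1,\dots,n-1$ parked (Remark~\ref{Rmk. role of last car}).
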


By the inclusion--exclusion relation between the $(\alpha)$ and $(\beta)$ versions both for the lucky statistics and for the algebraic contributions Proposition \ref{Pro. refine igualdad dyck} is equivalent to its $(\alpha)$ formulation.

\begin{proposition}\label{Pro. refine igualdad dyck alpha} 
Let $L \subseteq [n]$ be a subset of car indices. Then the following equalities hold:
\begin{equation}\label{Eq. alpha 1}
    \lucky^{(\alpha)}_{l}(\yy; L) = \sum_{\pi \in \Pf_n}\alg_{l}^{(\alpha)}(\yy ; \pi, L)
\end{equation}
\end{proposition}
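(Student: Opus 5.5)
The plan is to prove \eqref{Eq. alpha 1} by double counting, following Steps (b)--(d) of the guide. The right-hand side counts pairs $(\pi,\mathbf{c})$ with $\pi\in\Pf_n$ and $\mathbf{c}=(c_1,\dots,c_n)$ a \emph{choice vector}: for $j\in L$, $c_j$ is one of the $y_{\pi_j}$ elements of $Y_{\pi_j}$; for $j\notin L$, $c_j$ is one of $o_{j,l}(\pi)$ ``blocked'' objects. These blocked objects are the $\mu(\pi)_j-1$ earlier cars whose base letter is also $\pi_j$, plus, when $\pi_j=l=1$, the unavailable first space. I will build an algorithm $\Phi$ sending each such pair to a word $w\in\Pf_n(\yy)$ with $\Lucky_{m,l}(w)\subseteq L$. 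Then I will show that $\Phi$ is a bijection onto that set.

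The algorithm processes the cars $j=1,\dots,n$ in order while running the linear protocol of Section~\ref{protocol gen parking functions} on the prefix built so far. If $j\in L$, set $w_j:=c_j$; any value is allowed because the lucky set only has to be \emph{contained} in $L$. If $j\notin L$, car $j$ must be unlucky, so $w_j$ must be a space that is already occupied, or is space $1$ when $l=1$. Globally there are $j-1+l$ such spaces, not $o_{j,l}(\pi)$. The refinement I intend is to restrict to blocked spaces inside the interval $Y_{\pi_j}$. Concretely, $c_j$ picks an earlier car $k<j$ with $\pi_k=\pi_j$ (or the unavailable space), and $w_j$ is set to a space in $Y_{\pi_j}$ canonically attached to that choice. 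The natural candidate is the space where car $k$ parked under the block-circular protocol of Section~\ref{subsec:block-parking-protocol}, transported to the linear protocol by a rotation inside $Y_{\pi_j}$. Since every $w_j$ lies in $Y_{\pi_j}$, the word $w$ is a $\yy$-extension of $\pi$. By Remark~\ref{remk: y-parking to y-extended}, $w\in\Pf_n(\yy)$.

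The verification splits into three steps. First, I check that the produced $w_j$ is genuinely unlucky for $j\notin L$. This requires that the transported blocked space is occupied (or unavailable) in the linear protocol at time $j$. Here I would use Proposition~\ref{Pro words on Dyck path successfully park} and Corollary~\ref{col. all parking susefol or las not}: cars of a $\yy$-parking function never fail, except possibly the last one when $l=1$. So the counting is not distorted by failures, and the last-car case needs a separate easy check. Second, I check injectivity, by reading $(\pi,\mathbf{c})$ back from $w$: $\pi_j$ is the block containing $w_j$. For $j\in L$, $c_j=w_j$. For $j\notin L$, $c_j$ is recovered by inverting the canonical attachment, which is possible because the linear parking history of the prefix is determined by $w$. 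Third, I check surjectivity. Given $w\in\Pf_n(\yy)$ with lucky set in $L$, let $\pi$ be its base parking function. Every unlucky car $j\notin L$ must have hit a blocked space, and I must show that this space corresponds to exactly one of the $o_{j,l}(\pi)$ objects.

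The main obstacle is the mismatch between global and per-block counts. In the linear protocol an unlucky car $j$ can prefer any of $j-1+l$ blocked spaces, possibly in blocks different from where the cars with the same base letter parked. In contrast, $\alg^{(\alpha)}_l$ only provides $o_{j,l}(\pi)$ choices, computed per block. So $\Phi$ cannot be a position-by-position identity, and the canonical attachment must reshuffle which block each unlucky preference falls in. I expect $\pi$ must be allowed to change, keeping $\bigsqcup_{\ss}\mulPerm{\ss}=\Pf_n$ as the ambient set. If the direct attachment fails, I would try a recursive exchange: whenever an unlucky preference lands outside its block, swap the base letters of the conflicting cars. This is in the spirit of the swapping argument in Proposition~\ref{pro. y-parking sucesufull or last no}, and Lemma~\ref{lemma. properties parking generalized} ensures such permutations preserve the parking property. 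The remaining task would be to prove that this exchange procedure terminates and is invertible.
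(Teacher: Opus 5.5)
There is a genuine gap: the bijection $\Phi$ is never actually constructed, and it is the whole content of the proof. Your main candidate keeps each $w_j$ in $Y_{\pi_j}$, so that $w$ is a $\yy$-extension of $\pi$. No map of that kind can work, because the identity fails fiber by fiber. Take $\yy=(1,1,1)$, $l=0$, $L=\{1,2\}$:
\begin{itemize}
\item For $\pi=(1,1,1)$ you have $\alg_{0}^{(\alpha)}(\yy;\pi,L)=o_{3,0}(\pi)=2$. But $\pi$ has only one extension, and $Y_1=\{1\}$ offers only one value for $w_3$. (The block-circular protocol cannot even park car $2$ here.)
\item For $\pi=(1,1,2)$ you have $\alg_{0}^{(\alpha)}(\yy;\pi,L)=0$. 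Yet its extension $(1,1,2)$ has lucky set $\{1\}\subseteq L$: car $3$ hits space $2$, which is occupied by a car with base letter $1$.
\end{itemize}
The second case also shows that your surjectivity step, which takes $\pi$ to be the base of $w$, is false. You do anticipate that $\pi$ must change, but the fallback ``recursive exchange'' is left unspecified. Its termination, its invertibility, and the fact that its outputs are $\yy$-parking functions are exactly what remains to be proved.

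The missing idea is Algorithm~\ref{Alg. count alg by lucky set}. An unlucky car $j\notin L$ should prefer the actual \emph{linear-protocol} parking spot of one of the earlier cars $k<j$ with $\pi_k=\pi_j$ (or space $1$ when $\pi_j=l=1$). This makes car $j$ unlucky automatically. Since no car except possibly the last one fails (Corollary~\ref{col. all parking susefol or las not}), there are exactly $o_{j,l}(\pi)$ such choices.

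The chosen spot lies in some block $Y_i$ with $i\ge \pi_j$, and possibly $i>\pi_j$. So $w$ extends a composition obtained from $\ss$ by moving units to larger indices, not $\ss$ itself. You then need Lemma~\ref{Lemma. preserve dominance}: these moves preserve the dominance condition of $\mathcal{I}_n$. This is where $y_i\ge 1$ enters. At time $j$, every space from some position of $Y_{\pi_j}$ up to the chosen spot is occupied, and each intermediate block is nonempty, which supplies the required slack.

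Inversion is deterministic:
\begin{itemize}
\item for $j\in L$, $\pi_j$ is the block containing $w_j$;
\item for $j\notin L$, $\pi_j$ is the base letter of the car occupying $w_j$, with space $1$ labelled $1$ when $l=1$, because the occupied spaces at time $j$ are partitioned by labels.
\end{itemize}
The reverse moves go to smaller indices, so the recovered $\pi$ dominates the composition of $w$ and therefore lies in $\Pf_n$.
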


The advantage of working with the $(\alpha)$ formulation is that, by definition, the algebraic contributions $\alg_{l}^{(\alpha)}(\yy ; \pi,L)$ are always non-negative. This avoids cancellations in the summation and turns the problem into a double counting argument. 

\subsection{An algorithm preserving \texorpdfstring{$\yy$}{y}-parking functions} \label{ssec: algo preserving pf}

We recall the definition of the algebraic contributions $\alg_{l}^{(\alpha)}$ using the following notation.
Let $\pi \in \mathfrak{S}_{\ss}$ be a multiset permutation, let $L \subseteq [r]$ be a subset of car indices, and let $\yy \in \mathbb{Z}_{\geq 0}^{n}$ be a vector of non-negative integers. Remember the definition
\[o_{j,l}(\pi) := \mu(\pi)_j - 1 + \delta_{\pi_j,l},\]
where $\mu(\pi)_j$ denotes the partial multiplicity of the entry $\pi_j$ in $\pi$, defined for each $j \in [r]$ by
\[\mu(\pi)_j := \#\{ k \le j \mid \pi_k = \pi_j \}.\]
Note that, by definition, each quantity $o_{j,l}(\pi)$ is a non-negative integer.

Using the above notation, the algebraic contributions of type $(\alpha)$ can be written as
\[\alg_{l}^{(\alpha)}(\yy ; \pi,L)=\prod_{j \in L} y_{\pi_j}\;\cdot\;\prod_{j \in ([r]\setminus L)} o_{j,l}(\pi).\]

Moreover, this product admits a natural interpretation as a sequence of independent choices: for each $j \in [r]$, one chooses among $y_{\pi_j}$ possibilities if $j \in L$, and among $o_{j,l}(\pi)$ possibilities if $j \notin L$. This viewpoint motivates the construction of the following algorithm.

\begin{algorithm}\label{Alg. count alg by lucky set}
    Let $\ss,\yy \in \mathbb{Z}_{\geq 0}^{n}$ be vectors of non-negative integers, and fix $l \in \{0,1\}$.  
    Given a multiset permutation $\pi \in \mathfrak{S}_{\ss}$ and a subset $L \subseteq [r]$, where $r = \sum_{i=1}^{n} s_i$, the algorithm produces as output a word $\sigma \in [m]^r$ such that $\Lucky_{m,l}(\sigma) \subseteq L$, where $m = \sum_{i=1}^{n} y_i$.
    
    The procedure is as follows:
    \begin{enumerate}[\normalfont (1)]
        \item \textbf{Initialization.}  
        For each $i \in [n]$, define the occupied sets $O_i^{(0)} = \emptyset$.  
        If $l=1$, initialize $O_1^{(0)} = \{1\}$.  
        And the blocks associated to $\yy$ by
        $$Y_i = \{ y_1 + \cdots + y_{i-1} + 1, \dots, y_1 + \cdots + y_i \}.$$
        
        \item \textbf{Preference assignment.}  
        For each step $j \in [r]$, determine $\sigma_j$ as follows:
        \begin{itemize}
            \item If $j \in L$, choose $\sigma_j \in Y_{\pi_j}$.
            \item If $j \notin L$, choose $\sigma_j \in O^{(j-1)}_{\pi_j}$.
        \end{itemize}
    
        \item \textbf{Parking protocol simulation.}  
        Apply the parking protocol to the prefix $\sigma^{(j)} := (\sigma_1,\dots,\sigma_j)$, obtaining the output $\out_{m,l}(\sigma^{(j)})$.
    
        \item \textbf{Update of occupied sets.}  
        For each $i \in [n]$, update
        $$ O^{(j)}_{i} :=\{ \out_{m,l}(\sigma^{(j)})_k \mid k \le j,\ \pi_k = i,\ \out_{m,l}(\sigma^{(j)})_k \neq \varnothing \}.$$
    
        \item \textbf{Iteration.}  
        Repeat Steps (2)–(4) until all indices $j = 1,\dots,r$ have been processed.
    \end{enumerate}
\end{algorithm}

\begin{remark}\label{Rmk. 1 remark algortim}
    At each step $j$, the choice between $Y_{\pi_j}$ and $O^{(j-1)}_{\pi_j}$ depends on whether $j \in L$ or not.  
    Since the sets $O^{(j-1)}_i$ record the positions already taken by cars $k < j$ and such that $\pi_k = i$, then any choice from $O^{(j-1)}_{\pi_j}$ guarantees that the $j$-th car is not lucky.  
    Therefore, the algorithm always produces words $\sigma$ satisfying $\Lucky_{m,l}(\sigma) \subseteq L$.
\end{remark}

\begin{remark}\label{Rmk. key invariant alg}
Assume that for all steps $j' < j$, every choice made by the algorithm produces a prefix
$\sigma^{(j')}$ in which all cars successfully park.
Then, for each $j' < j$, the set $O^{(j'-1)}_{\pi_{j'}}$ has cardinality
\[\# O^{(j'-1)}_{\pi_{j'}}  \;=\; o_{j',l}(\pi).\]
To see this, we proceed as follows. Since no car fails before step $j$, each car $k < j$ with $\pi_k = i$ occupies a distinct parking space.
Thus, the set $O^{(j-1)}_i$ contains exactly one element for each previous occurrence of $i$ in $\pi$.
If $i \neq l$, this yields 
$$\# O^{(j-1)}_i = \mu(\pi)_j - 1.$$
If $i = l$ and $l=1$, the initially blocked space contributes one additional element, giving 
$$\# O^{(j-1)}_1 = \mu(\pi)_j - 1 + 1.$$
In both cases, we obtain $\# O^{(j-1)}_{\pi_j} = o_{j,l}(\pi)$.
\end{remark}

\begin{remark}\label{Rmk. 2 remark algortim}
    Assume that, for all steps $j < r$, every possible choice produces a prefix $\sigma^{(j)}$ in which all cars successfully park.
    By Remark \ref{Rmk. key invariant alg}, at each step $j$ the number of available choices for $\sigma_j$
    is exactly $y_{\pi_j}$ if $j \in L$, and exactly $o_{j,l}(\pi)$ if $j \notin L$.
    Therefore, the total number of distinct outputs of the algorithm for a fixed input
    $\pi \in \mulPerm{\ss}$ is precisely $\alg_{l}^{(\alpha)}(\yy ; \pi,L)$.
\end{remark}

\begin{remark}\label{Rmk. role of last car}
A crucial feature of Algorithm \ref{Alg. count alg by lucky set} is that the counting of outputs only depends on the behavior of the parking protocol up to step $r-1$.
Once all cars indexed by $j < r$ have successfully parked, the number of available choices at each step is already fixed and realizes the algebraic contribution $\alg_{l}^{(\alpha)}(\yy ; \pi,L)$.

Consequently, whether the last car parks successfully or not does not affect the total count of outputs of the algorithm, since no further updates of the occupied sets are required.
\end{remark}

Algorithm \ref{Alg. count alg by lucky set} induces a partition of the set of generalized parking functions $\Pf(r,m)$ whose lucky set is contained in $L$. This partition depends on the vector $\yy$.

Given a generalized parking function $\sigma \in \Pf(r,m)$ such that $\lucky_{m,l}(\sigma )\subseteq L$, it is possible to reconstruct deterministically the word $\pi \in [n]^r$ that would have produced $\sigma$ via Algorithm~\ref{Alg. count alg by lucky set}. This \textit{reconstruction procedure} is described as follows:

\begin{itemize}
    \item Set $\pi _1=i$ such that $\sigma _1\in Y_i$.
    \item For each step $j=2,\dots ,r$, determine $\pi _j$ according to the following rule:
    \begin{itemize}
        \item If $j\in L$, then set $\pi _j=i$ such that $\sigma _j\in Y_i$.
        \item If $j\notin L$, then set $\pi _j=i$ such that $\sigma _j\in O_i^{(j-1)}$, where the sets $O_i^{(j-1)}$ are updated by simulating the parking protocol on the prefix $\sigma^{(j)}=(\sigma _1,\dots ,\sigma _j)$, the same way as the Algorithm \ref{Alg. count alg by lucky set}:
        $$O^{(j)}_{i} := \{\out_{m,l}(\sigma^{(j)})_k \mid k \leq j, \ \pi_k = i\}.$$
    \end{itemize}
\end{itemize}

At the end of this process, one obtains a word $\pi \in [n]^r$ which, when used as input to Algorithm~\ref{Alg. count alg by lucky set}, produces $\sigma$, with the correct choices produces exactly the generalized parking function $\sigma$. This procedure is deterministic. Therefore we have that the following property:

\begin{lemma}\label{Lemma. partition parking by algorim}
Let $\sigma \in \Pf(r,m)$ be a generalized parking function such that $\Lucky_{m,l}(\sigma )\subseteq L$, and let $\yy \in \Z_{\geq 0}^{n}$. Then there exists a unique word $\pi \in [n]^r$ such that the Algorithm \ref{Alg. count alg by lucky set}, when applied to $\pi$ , produces $\sigma$ . Moreover, if two such words are distinct, then the sets of generalized parking functions they produce via the algorithm are disjoint.
\end{lemma}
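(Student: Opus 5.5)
The plan is to prove existence and uniqueness together, by induction on the step $j$. The claim to carry along is that the prefix $(\pi_1,\dots,\pi_j)$ is forced by the prefix $(\sigma_1,\dots,\sigma_j)$. The disjointness claim then follows at once.

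Fix $\sigma\in\Pf(r,m)$ with $\Lucky_{m,l}(\sigma)\subseteq L$. Since $\sigma$ is a generalized parking function, under the protocol with $l=0$ every car parks. For $l=1$, Proposition~\ref{pro. y-parking sucesufull or last no} says that at most the last car fails. Hence for every $j<r$ all cars in the prefix $\sigma^{(j)}$ park at distinct spaces. Consequently the occupied sets $O^{(j-1)}_1,\dots,O^{(j-1)}_n$ built in Step~(4) of Algorithm~\ref{Alg. count alg by lucky set} are pairwise disjoint. Each $O_i^{(j-1)}$ consists of outputs of earlier cars $k$ with $\pi_k=i$, together with the blocked space $1$ placed in $O_1$ when $l=1$; these are distinct spaces, so no space lies in two of the sets. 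The blocks $Y_1,\dots,Y_n$ are pairwise disjoint by construction.

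With this invariant, the induction step splits into two cases.

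\emph{Case $j\in L$.} The algorithm must have chosen $\sigma_j\in Y_{\pi_j}$. Since the $Y_i$ partition $[m]$, $\pi_j$ is forced to be the unique $i$ with $\sigma_j\in Y_i$.

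\emph{Case $j\notin L$.} The algorithm must have chosen $\sigma_j\in O^{(j-1)}_{\pi_j}$. The sets $O^{(j-1)}_i$ depend only on $\sigma^{(j-1)}$ and on $\pi_1,\dots,\pi_{j-1}$, and the latter are already determined by the induction hypothesis. Disjointness of the $O^{(j-1)}_i$ then forces $\pi_j$ to be unique, if it exists. For existence, note that car $j$ is not lucky because $j\notin L$, so $\sigma_j$ is a space already unavailable when car $j$ arrives. Such a space is either occupied by some earlier car $k$ (which therefore parked) or, when $l=1$, equal to the blocked space $1$. In the first situation $\sigma_j\in O^{(j-1)}_{\pi_k}$; in the second $\sigma_j\in O^{(j-1)}_1$. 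So some $i$ with $\sigma_j\in O^{(j-1)}_i$ exists. This is exactly the reconstruction procedure, and running the algorithm with these $\pi_j$ and these choices returns $\sigma$.

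For disjointness, suppose $\sigma$ is produced both from $\pi$ and from $\pi'$. By uniqueness $\pi=\pi'$, so distinct words produce disjoint sets of outputs.

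The main obstacle is the case $j=r$ when $l=1$, where the last car may fail to park. This does not break the argument, because only the sets $O^{(r-1)}_i$ are used at step $r$, and these involve only cars $k<r$, all of which parked (Remark~\ref{Rmk. role of last car}). A second point to verify is that a non-lucky car's preference always lies in some $O^{(j-1)}_i$. This needs care, since $O$ only records spaces of cars that parked. The concern disappears precisely because all earlier cars did park, by Proposition~\ref{pro. y-parking sucesufull or last no}, which I would invoke explicitly there.
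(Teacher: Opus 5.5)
Your proposal is correct and is the paper's own argument. The paper justifies the lemma by the deterministic reconstruction procedure: $\pi_j$ is read off from the block containing $\sigma_j$ when $j\in L$, and from the unique $O^{(j-1)}_i$ containing $\sigma_j$ when $j\notin L$. You additionally supply what the paper leaves implicit, namely that the $O^{(j-1)}_i$ are pairwise disjoint (with the blocked space $1$ kept in $O_1$) and that a non-lucky car always finds such an $i$.

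Your appeal to Proposition~\ref{pro. y-parking sucesufull or last no} is not needed: parked cars always occupy distinct spaces, and an occupied preferred space is always occupied by a car that parked. So the reconstruction works for any $\sigma\in[m]^r$ with $\Lucky_{m,l}(\sigma)\subseteq L$.
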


The input to the Algorithm \ref{Alg. count alg by lucky set} is a multiset permutation $\pi \in \mathfrak{S}_{\ss}$, where $\ss$ is a weak composition of $r$, encoding the multiplicities of the symbols in $\pi$ (each $s_i$ counts the number of occurrences of $i$ in $\pi$).

We now introduce an auxiliary tracking mechanism during the execution of the algorithm. 

Let $\ss^{(0)}:=\ss$ be the initial composition. At each step $j\in [r]$, we identify the interval $Y_i$ such that $\sigma _j\in Y_i$.
\begin{itemize}
    \item If $i=\pi _j$, then no update is performed, and we set $\ss^{(j)}:=\ss^{(j-1)}$.
    \item If $i\neq \pi_j$, then we update the composition vector as follows:
    $$\ss^{(j)} = \ss^{(j-1)} + e_i - e_{\pi_j}$$
\end{itemize}
Here $e_i$ is the standard unit vector where $(e_i)_j = \delta_{i,j}$. 

This update reflects the fact that the symbol $\sigma_j$ belongs to the interval $Y_i \not=Y_{\pi_j}$. In such cases, the number of occurrences of the symbols $\pi_j$ and $i$ is reassigned in the weak composition $\ss^{(j)}$.

As a result, at each step $j$, the prefix $\sigma^{(j)} := (\sigma_1,\dots,\sigma_j)$ is the prefix of some $\yy$-extended permutation in $\ExtPerm{\ss^{(j)}}{\yy}$. In particular, at the end of the algorithm, the final word $\sigma$ is in $\ExtPerm{\ss^{(r)}}{\yy}$. \\

Conversely, given an output word $\sigma$, we can reconstruct the multiset permutation $\pi$ via the same procedure. We initialize a composition vector $\ss'^{(0)} = (\ss'^{(0)}_1,\dots,\ss'^{(0)}_n)$ as follows:
$$\ss'^{(0)}_i = \#\{j \in [r] \mid \sigma_j \in Y_i\}$$

Therefore $\sigma \in \ExtPerm{\ss'^{(0)}}{\yy}$. As we reconstruct the multiset permutation $\pi$  step by step, we update the composition vector $\ss'^{(j)}$ according to the following rule:
\begin{itemize}
    \item If $\pi _j=i$ such that $\sigma _j\in Y_i$, then no update is performed, and $\ss'^{(j)}:=\ss'^{(j-1)}$.
    \item If $\pi _j\not= i$ such that $\sigma _j\in Y_i$, then we update:
    $$\ss'^{(j)} = \ss'^{(j-1)} - e_i + e_{\pi_j}$$
\end{itemize}
This update reflects the reassignment of occurrences from the symbol $i$ to the symbol $\pi_j$, reversing the transformation performed during the forward execution of Algorithm \ref{Alg. count alg by lucky set}.

At the end of the reconstruction procedure, the resulting word $\pi$  satisfies $\pi \in \mulPerm{\ss'^{(r)}}$, where $\ss'^{(r)}$ is the final composition obtained after all updates.

\begin{remark}\label{Remark naturlay of ss}
    During the forward execution of Algorithm \ref{Alg. count alg by lucky set}, suppose that at some step $j$ the symbol $\sigma_j \in Y_i$ with $i \neq \pi_j$. Then necessarily $j \notin L$, since the algorithm prescribes that if $j \in L$, the choice $\sigma_j$ must belong to $Y_{\pi_j}$.
    
    Thus, $\sigma_j$ must have been chosen from the set $O^{(j-1)}_{\pi_j}$, which consists of parking outcomes of previous cars $k<j$ such that $\pi_k=\pi_j$.
    By the parking protocol, any such outcome satisfies
    $$\out_{m,l}(\sigma^{(j-1)})_k > \sigma_k,$$
    since cars only move to the right when their preferred space is occupied.
    
    Consequently, if $\sigma_j \in Y_i$ with $i \neq \pi_j$, then necessarily $\pi_j < i$.
    This implies that the update
    $$\ss_{\pi_j}^{(j)} = \ss_{\pi_j}^{(j-1)} - 1, \qquad  \ss_i^{(j)} = \ss_i^{(j-1)} + 1$$
    transfers a unit from a smaller index $\pi_j$ to a larger index $i$.

    In contrast, during the reconstruction procedure, the update
    $$\ss'^{(j)}_{\pi_j} = \ss'^{(j-1)}_{\pi_j} + 1, \qquad \ss'^{(j)}_i = \ss'^{(j-1)}_i - 1$$
    reverses this process, transferring a unit from a larger index $i$ to a smaller index $\pi_j$.
    
    This monotonicity of the updates will be essential in establishing the dominance condition defining the $\mathcal{I}_n$.
\end{remark}

\begin{lemma}\label{Lemma. preserve dominance}
    Let $\yy=(y_1,\dots,y_n)$ be a vector of positive integers. 
    Suppose that at step $j$ of Algorithm~\ref{Alg. count alg by lucky set} the composition vector $\ss^{(j)}$ is updated. 
    
    If the previous composition $\ss^{(j-1)}$ satisfies
    $$\sum_{l=1}^i s_l^{(j-1)} \ge i \quad \text{for all } i=1,\dots,n,$$
    then the updated composition $\ss^{(j)}$ also satisfies
    $$\sum_{l=1}^i s_l^{(j)} \ge i \quad \text{for all } i=1,\dots,n.$$
\end{lemma}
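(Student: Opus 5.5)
The plan is to use the monotonicity established in Remark~\ref{Remark naturlay of ss}: when the composition vector is updated at step $j$, a unit moves from index $p:=\pi_j$ to a strictly larger index $q:=i$. Concretely,
\[
\ss^{(j)} = \ss^{(j-1)} - e_p + e_q, \qquad p<q.
\]
Once this is in hand, the lemma reduces to a statement about partial sums. For each $k\in[n]$ we have
\[
\sum_{t=1}^k s^{(j)}_t = \sum_{t=1}^k s^{(j-1)}_t - [\,p\le k\,] + [\,q\le k\,].
\]
So we split into three ranges of $k$.

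For $k<p$, neither $p$ nor $q$ lies in $[k]$. For $k\ge q$, both do. In both of these ranges the partial sum is unchanged, and the hypothesis $\sum_{t\le k}s^{(j-1)}_t\ge k$ carries over directly.

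The only delicate range is $p\le k<q$, where the partial sum drops by exactly one. Here we need the stronger bound $\sum_{t\le k}s^{(j-1)}_t\ge k+1$.

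To get it, I would use the combinatorial meaning of the step. The value $\sigma_j$ was chosen from $O^{(j-1)}_p$. That set consists of spaces where earlier cars $c<j$ with $\pi_c=p$ ended up parking (or the blocked space $1$ when $l=1$, but that lies in $Y_1$, hence in $Y_p$, so it never triggers an update). Since $\sigma_j\in Y_q$, some earlier car $c$ with preference inside $Y_{p'}$ for some $p'\le p$ drove rightward from $Y_{p'}$ all the way into $Y_q$, crossing every space in $Y_{p+1}\cup\dots\cup Y_k$ and the tail of $Y_p$.

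When car $c$ passed through, all those spaces were already occupied, as was every space from its own preference up to the end of $Y_k$. Because the protocol is linear on $[m]$ (not block-cyclic), this gives a contiguous run of occupied spaces. I would then argue via the standard parking-function count: if a car overflows past position $Y_1\cup\dots\cup Y_k$, then the cars with preference in $Y_1\cup\dots\cup Y_k$ number more than $|Y_1\cup\dots\cup Y_k|\ge k$, since each $y_t\ge1$. More precisely, the number of cars, including $c$, whose preferences lie in $Y_1\cup\dots\cup Y_k$ is at least $y_1+\dots+y_k+1\ge k+1$.

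The final step is to translate this count of preferences into the partial sum of $\ss^{(j-1)}$. This is where I expect the main obstacle. The vector $\ss^{(j-1)}$ is not literally the count of preferences lying in each block: it is the original composition modified by the earlier transfers. I would therefore prove, by induction on $j$, the invariant that $\sum_{t\le k}s^{(j-1)}_t$ equals the number of $c'\le j-1$ with $\sigma_{c'}\in Y_1\cup\dots\cup Y_k$, plus the number of $c'>j-1$ with $\pi_{c'}\le k$. This holds because each transfer exactly relabels car $j$ from its $\pi$-block to the block containing $\sigma_j$.

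With that invariant, the overflow count above yields $\sum_{t\le k}s^{(j-1)}_t\ge k+1$ for $p\le k<q$, which completes the argument. The care needed is in applying the overflow count on the prefix $\sigma^{(j-1)}$ rather than the full word, and in handling the shift caused by the blocked first space when $l=1$.
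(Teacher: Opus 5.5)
Your outline matches the paper's: the three ranges of $k$, the need for $S_k:=\sum_{t\le k}s^{(j-1)}_t\ge k+1$ when $p\le k<q$, and the invariant $S_k=\#\{c\le j-1:\sigma_c\le u_k\}+\#\{c\ge j:\pi_c\le k\}$ with $u_k:=y_1+\cdots+y_k$. The invariant is correct. The gap is the overflow count.

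You claim that if some car overflows past $u_k$, then at least $u_k+1$ of the cars $1,\dots,j-1$ prefer a space in $[1,u_k]$. An overflow only shows that the maximal filled run ending at $u_k$ is full, not that all of $[1,u_k]$ is full, and the claim is false. Take $\yy=(1,1,1,1,1)$, $l=0$, $\pi=(3,3,3,1,1)$ (so $\ss=(2,0,3,0,0)\in\mathcal{I}_5$) and $L=\{1,4,5\}$. The algorithm gives $\sigma_1=\sigma_2=3$ (parking at $3$ and $4$), and it may choose $\sigma_3=4\in Y_4$, an update with $p=3$, $q=4$. For $k=3$ your count would require four cars among the first two. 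The lemma survives only through the second term of the invariant ($S_3=2+3=5$).

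Worse, with $\pi=(3,3,3,4,4)$ the same three steps give $S_3=3$ before the update and $S_3=2$ after. So the middle-range bound is false unless you use the dominance hypothesis at indices \emph{below} $k$. Your argument never does: you invoke the hypothesis only at $k$ itself, in the outer ranges.

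There is also a secondary inaccuracy. Cars with $\pi_c=p$ prefer spaces $\ge\min Y_p$, not spaces in some $Y_{p'}$ with $p'\le p$. Moreover, no single car need drive from $Y_p$ into $Y_q$. The filled stretch from $Y_p$ to $\sigma_j$ comes from following the chain of cars with $\pi=p$: $\sigma_j=b_{c_1}$, $\sigma_{c_1}=b_{c_2},\dots$, where $b_c$ is the space car $c$ occupies. You follow it back to a car whose preference lies in $Y_p$ or is the blocked space.

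To close the gap, let $c$ be the car of this chain with $\sigma_c\le u_k<b_c$. Let $a\le u_p$ be the left end of the maximal run of filled (occupied or blocked) spaces containing the stretch.

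If $a=1$, then $[1,u_k]$ is filled, so at least $u_k-1$ earlier cars prefer spaces in $[1,u_k]$. Car $c$ is one more, and car $j$ (with $\pi_j=p\le k$) adds to the second term, giving $S_k\ge u_k+1\ge k+1$.

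Otherwise space $a-1$ is empty, say $a-1\in Y_{k_0}$ with $k_0\le p$. No car has ever passed it, so the $u_k-a+1$ cars parked in $[a,u_k]$ and car $c$ all prefer spaces in $[a,u_k]$. That is at least $u_k-a+2\ge u_k-u_{k_0}+1\ge k-k_0+1$ cars; this is where $y_t\ge 1$ enters. The cars counted by $S_{k_0-1}$ (with $S_0=0$) are disjoint from these, and from car $j$ since $\pi_j=p\ge k_0$. Hence, using the hypothesis at $k_0-1$,
\[
S_k\;\ge\;S_{k_0-1}+(k-k_0+1)+1\;\ge\;(k_0-1)+(k-k_0+1)+1\;=\;k+1 .
\]

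The paper's proof compresses this step into the observation that each of $Y_p,\dots,Y_k$ contains an occupied space, which alone yields only $k-p+1$. So the appeal to the hypothesis at $k_0-1$ is exactly what must be added in either version.
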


\begin{proof}
    An update occurs only when $\sigma_j \in Y_i$ with $i \neq \pi_j$. In this case,
    $$\ss_{\pi_j}^{(j)} = \ss_{\pi_j}^{(j-1)} - 1, \qquad \ss_i^{(j)} = \ss_i^{(j-1)} + 1,$$
    with $\pi_j < i$, and all other entries unchanged.
    
    For $k < \pi_j$, the partial sums remain unchanged. For $k \ge i$, we also have
    $$\sum_{l=1}^k \ss_l^{(j)} = \sum_{l=1}^k \ss_l^{(j-1)},$$
    so the inequality is preserved. Thus it suffices to consider $\pi_j \le k < i$. For such $k$,
    $$\sum_{l=1}^k \ss_l^{(j)} = \sum_{l=1}^k \ss_l^{(j-1)} - 1.$$
    Hence, a violation could only occur if
    $$\sum_{l=1}^k \ss_l^{(j-1)} = k.$$
    
    We now rule out this possibility. Since $\sigma_j \in Y_i$ with $i>\pi_j$, the $j$-th car must have passed through all blocks $Y_{\pi_j+1},\dots,Y_{i-1}$ without parking. As each $y_k \ge 1$, every block $Y_k$ contains at least one position, which must therefore already be occupied when car $j$ arrives.
    
    Thus, for each $k \in [\pi_j,i-1]$, there exists a car that parked in $Y_k$ before step $j$. Any such car must correspond to an index $\le k$, and therefore contributes to the sum $\sum_{l=1}^k \ss_l^{(j-1)}$. Hence
    $$\sum_{l=1}^k \ss_l^{(j-1)} \ge k+1.$$
    After subtracting one unit, we obtain
    $$\sum_{l=1}^k \ss_l^{(j)} \ge k,$$
    as required.
\end{proof}

\begin{proof}[Proof of Proposition~\ref{Pro. refine igualdad dyck alpha} \eqref{Eq. alpha 1}]
    Let $\sigma$ be a $\yy$-parking function such that
    $\Lucky_{m,l}(\sigma)\subseteq L$. Then there exists a weak composition $\ss'\in \mathcal{I}_n$
    such that $\sigma\in \ExtPerm{\ss'}{\yy}$.
    
    Apply the reconstruction procedure to $\sigma$, and track the evolution of the
    composition vector $\ss'^{(j)}$. By Remark~\ref{Remark naturlay of ss}, each update transfers a unit from a
    larger index to a smaller one. In particular, for every $i$,
    $$\sum_{l=1}^i \ss'^{(j)}_l \ge \sum_{l=1}^i \ss'^{(j-1)}_l.$$
    Since the initial composition $\ss'\in\mathcal{I}_n$ satisfies the dominance condition, it follows that all intermediate compositions $\ss'^{(j)}$ also lie in $\mathcal{I}_n$.
    In particular, the final composition $\ss_0 := \ss'^{(r)}$ belongs to $\mathcal{I}_n$.
    
    Consequently, for every $\sigma$ with $\Lucky_{m,l}(\sigma)\subseteq L$, there exists a unique $\ss_0\in\mathcal{I}_n$ and a unique $\pi\in\mulPerm{\ss_0}$ such that Algorithm~\ref{Alg. count alg by lucky set}
    produces $\sigma$ from $\pi$.
    
    By Lemma~\ref{Lemma. partition parking by algorim}, the algorithm is injective, hence
    $$\lucky^{(\alpha)}_{l}(\yy; L) \le \sum_{\ss\in\mathcal{I}_n}\sum_{\pi\in\mulPerm{\ss}} \alg_{l}^{(\alpha)}(\yy ; \pi,L).$$
    
    Conversely, let $\sigma$ be an output of Algorithm~\ref{Alg. count alg by lucky set} applied to some $\pi\in\mathfrak{S}_{\ss}$ with $\ss\in\mathcal{I}_n$.
    By Lemma~\ref{Lemma. preserve dominance}, the final composition $\ss_1 := \ss^{(r)}$ also lies in $\mathcal{I}_n$,
    and $\sigma\in\ExtPerm{\ss_1}{\yy}$. Hence $\sigma$ is a $\yy$-parking function.
    
    Moreover, for each $j$, the prefix $\sigma^{(j)}$ is a prefix of a $\yy$-parking function. By Corollary~\ref{col. all parking susefol or las not}, all cars park successfully except possibly the last one. Therefore, the hypothesis of Remark~\ref{Rmk. 2 remark algortim} holds, and $\alg_{l}^{(\alpha)}(\yy ; \pi,L)$ counts exactly all outputs corresponding to $\pi$.
    
    This shows that
    $$\lucky^{(\alpha)}_{l}(\yy; L) = \sum_{\ss\in\mathcal{I}_n}\sum_{\pi\in\mulPerm{\ss}} \alg_{l}^{(\alpha)}(\yy ; \pi,L),$$
    as claimed.
\end{proof}

The hypothesis that $y_i \geq 1$ is necessary to guarantee the dominance condition in Lemma~\ref{Lemma. preserve dominance} is preserved. The following example shows that when some $y_i=0$, the dominance condition may fail during the execution of the algorithm. In particular, Lemma~\ref{Lemma. preserve dominance} no longer holds, and the algorithm may produce words that are not $\yy$-parking functions.

\begin{example}\label{ex:failure algorithm zero yi}
We revisit Example~\ref{ex:PS 0 ys that is not magic positive} from the perspective of Algorithm~\ref{Alg. count alg by lucky set}. Fix $l = 1$ and recall that for $n=3$ and $\yy = (1,0,2)$, the magic positivity fails. We now show that this failure is reflected in the application of the algorithm.

The blocks associated to $\yy$ are: 
$$Y_1 = \{1\}, \quad Y_2 = \emptyset, \quad Y_3 = \{2,3\}.$$
Let $\pi = (1,1,1) \in \Pf_3$ and $L = \emptyset$. We run the algorithm always choosing the largest available value.
Initialize $O_1^{(0)} = \{1\}$ (given that $l = 1$) and $O_2^{(0)} = O_3^{(0)} = \emptyset$.

\begin{itemize}
    \item Step $j=1$. The only choice is $\sigma_1 = 1$. Then $\out_{3,1}(\sigma^{(1)}) = (2)$, so $O_1^{(1)} = \{1,2\}$.
    \item Step $j=2$. Choose $\sigma_2 = 2$, hence $\sigma^{(2)} = (1,2)$ and $\out_{3,1}(\sigma^{(2)}) = (2,3)$. Thus $O_1^{(2)} = \{1,2,3\}$.
    \item Step $j=3$. Choose $\sigma_3 = 3$, obtaining $\sigma = (1,2,3)$.
\end{itemize}

We now show that $\sigma$ is not a $\yy$-parking function. Tracking the composition vector, we start from $\ss^{(0)} = (3,0,0)$ and obtain $\ss^{(3)} = (1,0,2)$.

This composition violates the dominance condition since $1 + 0 < 2$, so $\ss^{(3)} \notin \mathcal{I}_3$. Therefore, $\sigma \notin \Pf_3(\yy)$.

\end{example}

This example illustrates that the assumption $y_i \ge 1$ is used to guarantee that every block $Y_i$ contains at least one available position, which is crucial in the proof of Lemma~\ref{Lemma. preserve dominance}.

\section{Final Remarks} \label{sec: final remarks}

\subsection{Another parking protocol for \texorpdfstring{$\yy$}{y}-parking functions} \label{sec: other parking protocols y-parking}

After we finished the first version of this manuscript \cite{avila-ferroni-morales-abstract}, we discovered that in \cite{ferreri2025enumeratingvectorparkingfunctions}, Ferreri, Harris, Martinez, and Swartz independently studied another parking protocol for ${\bf y}$-parking functions that exhibits some similarities to the one treated in the present paper. We know that ${\bf y}$-parking functions are equivalent to ${\bf u}$-parking functions under the identification $u_i = y_1 + \cdots + y_i$. 

Given $\yy=(y_1,\ldots,y_n) \in \mathbb{Z}^n_{\geq 0}$ with $m=y_1+\cdots+y_n$, and the word $w \in [m]^n$, they interpret $w = (w_1, \dots, w_n)$ as the parking preferences of $n$ cars that try to park in $m$ available spaces, each space $j \in [m]$ with capacity $m_{j}({\bf u})$ the multiplicity of $j$ in ${\bf u}$. A car can park at a space if there is still capacity. 

Each car $i \in [n]$ tries to park in its preferred space $w_i$.  If that space $w_i$ is already occupied by $m_{w_i}({\bf u})$ cars, the car moves to the next available space $w_i+1$. If no available space is found after reaching the end, the car is considered to have failed to park. In this setting, the words $w$ that lead to all cars parking successfully are exactly the ${\yy}$-parking functions in $\Pf_n(\yy)$.  

In contrast, our parking protocol for ${\bf y}$-parking functions considers $m = \sum_{i=1}^n y_i$ linearly ordered spaces with unit capacity. While every ${\bf y}$-parking function parks successfully under our protocol (see Proposition~\ref{Pro words on Dyck path successfully park}), the converse does not hold in general: there exist preference sequences that park successfully but are not ${\bf y}$-parking functions. For example, if ${\bf y} = (1,2)$, the word $w=(2,3)$ parks successfully in $m=3$ spaces, but it is not a ${\bf y}$-parking function.  Note that both protocols agree in the classical case when $\yy=(1,\ldots,1)$.

As a consequence, the notion of ``lucky cars'' depends on the chosen protocol and is not preserved under this correspondence. In particular, the associated statistics are not equidistributed 
However, when $\ell=1$ in our protocol, calculations suggest that both ours and their lucky statistic are co-equidistributed (see Example~\ref{ex:stat codistributed}). For  $\yy \in \mathbb{Z}^n_{> 0}$, let $L \subseteq [n]$, denote as $\operatorname{lucky}_{\text{FHMS}}(\yy;L)$ the number of $\yy$-parking functions with lucky cars $L$ under the FHMS protocol and for a $\yy$-parking function $\sigma$, let $\lucky_{\text{FHMS}}(\yy;\sigma)$ be the number of lucky cars under the FHMS protocol. 

\begin{conjecture} \label{conj: equidistribution both protocols}
For $\yy \in \mathbb{Z}^n_{> 0}$, we have that 
    \begin{equation} \label{eq: lucky set equiv}
    \operatorname{lucky}_{\text{FHMS}}(\yy; L) = \lucky^{(\beta)}_{1}(\yy;[n]\setminus \overline{L}),
    \end{equation}
    where $\overline{L} := \{n +1- i  \ | \ i \in L\}$.
In particular, we have that  
\begin{equation} \label{eq: lucky number equiv}
\sum_{\sigma \in \Pf_n(\yy)}t^{\lucky_{\text{FHMS}}(\yy; \sigma)} \,=\, \sum_{\sigma \in \Pf_n(\yy)}t^{n-\lucky_{m,1}(\sigma)}.
\end{equation}
\end{conjecture}

\begin{example} \label{ex:stat codistributed}
For ${\bf y}=(1,2)$ (so ${\bf u}=(1,3)$), the ${\bf y}$-parking functions and the corresponding set and number of lucky cars in our setting for $\ell=0,1$ and in the setting of \cite{ferreri2025enumeratingvectorparkingfunctions} are given in Table~\ref{table:lucky stats}. The generating polynomials for the number of lucky cars are $4t^2 + t$, $3t+2$, and $2t^2 + 3t$, respectively. Note that the first and last notions are not equidistributed, but the second and third are (up to taking the complement), illustrating \eqref{eq: lucky number equiv} in Conjecture~\ref{conj: equidistribution both protocols}. Note also how the data in the third and fifth columns illustrate \eqref{eq: lucky set equiv} in Conjecture~\ref{conj: equidistribution both protocols}. 

\begin{table}
$$\begin{array}{c|cc|cc|cc}\hline
\Pf_n(\yy) & \text{Lucky}_{m,0} & \text{lucky}_{m,0} & \text{Lucky}_{m,1} & \text{lucky}_{m,1} &  \text{Lucky}_{\text{FHMS}} &  \text{lucky}_{\text{FHMS}} \\ \hline
(1, 2) & \{1,2\} & 2 & \emptyset & 0 & \{1\}   & 1\\ 
(1, 3) & \{1,2\} & 2 & \{2\}     & 1 & \{1,2\} & 2\\ 
(2, 1) & \{1,2\} & 2 & \{1\}     & 1 & \{2\}   & 1\\ 
(3, 1) & \{1,2\} & 2 & \{1\}     & 1 & \{1,2\} & 2\\ 
(1, 1) & \{1\}   & 1 & \emptyset & 0 & \{1\}   & 1\\ 
\hline\end{array}$$
\caption{Comparison of the lucky statistic and that in \cite{ferreri2025enumeratingvectorparkingfunctions} on the ${\bf y}$-parking functions for ${\bf y}=(1,2)$. In this case, where ${\bf u}=(1,3)$ and thus $m_1({\bf u})=m_3({\bf u})=1$ and $m_2({\bf u})=0$.} \label{table:lucky stats}
\end{table}

\end{example}

\begin{example}
For ${\bf y}=(4,3,5)$ (so ${\bf u}=(4,7,12)$), the number of ${\bf y}$-parking functions with lucky cars  given by $L\subset [3]$ under the FHMS protocol and our protocol for $\ell=1$ are given in Table~\ref{table:larger example conjecture}, illustrating \eqref{eq: lucky set equiv} in Conjecture~\ref{conj: equidistribution both protocols}. 

\begin{table}
\[
\begin{array}{ccccc}\hline
L & \overline{L} & [3]\setminus \overline{L}&  \text{Lucky}_{\text{FHMS}}(\yy; L) & \text{Lucky}^{(\beta)}_{1}(\yy; L) \\ \hline
\varnothing &\varnothing&\left\{1,2,3\right\} &474 & 6 \\
\left\{1\right\} & \left\{3\right\}&\left\{1,2\right\}&168 & 40 \\
\left\{2\right\} & \left\{2\right\}&\left\{1,3\right\}&115 & 25 \\
\left\{1, 2\right\} & \left\{2,3\right\}&\left\{1\right\}&40 & 168 \\
\left\{3\right\} & \left\{1\right\}&\left\{2,3\right\}&70 & 18 \\
\left\{1, 3\right\} &\left\{1,3\right\}&\left\{2\right\}& 25 & 115 \\
\left\{2, 3\right\} &\left\{1,2\right\} &\left\{3\right\}&18 & 70 \\
\left\{1, 2, 3\right\} &\left\{1,2,3\right\}&\varnothing& 6 & 474 \\ 
\hline\end{array}
\]
\caption{For $\yy=(4,3,5)$, the number of $\yy$-parking functions with lucky cars $L\subset [3]$ under the FHMS protocol and under our protocol for 
$\ell=1$. The sets $\overline{L}$ and $[3]\setminus \overline{L}$ are also included.}
\label{table:larger example conjecture}
\end{table}
\end{example}

\subsection{A curious identity for the case of generalized parking functions \texorpdfstring{$\Pf(n,m)$}{Pf(n,m)}}

A curious consequence outside the polytope context appears in the case $l=0$. In this situation, the parking protocol coincides with that studied by Kenyon and Yin \cite{kenyonparkingfunctions}. They show that for generalized parking functions $\Pf(n,m)$, the generating function of the statistic $\lucky_{m,0}$ is given by
\cite[Eq.~(2.20)]{kenyonparkingfunctions}
\begin{equation}\label{eq. lucky polynomial}
    \sum_{w \in \Pf(n,m)}t^{\lucky_{m,0}(w)} = (m - n + 1)t\prod_{i=1}^{n-1}(i+(m-i+1)t).
\end{equation}

Recall that when $\yy = (m-n+1,1,\dots,1)$, one has $\Pf_n(\yy)=\Pf(n,m)$. Therefore, Theorem~\ref{The. magic for a y-parking functions} implies that the left-hand side of \eqref{eq. lucky polynomial} can be expressed explicitly as a sum indexed by Dyck paths $\mathcal{I}_n$. Comparing both expressions yields the following identity.

\begin{proposition}
    For $m \geq n$, the following identity holds:
    $$\sum_{s \in \mathcal{I}_n}\bbinom{(m-n + 1)t}{s_1}\prod_{i=2}^{n}\bbinom{t}{s_i} = \frac{m - n + 1}{m+1}\bbinom{(m+1)t}{n}$$
\end{proposition}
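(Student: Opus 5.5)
Both sides of the identity are polynomials in $t$ of degree $n$, and both are images of the same generating function, so my plan is to apply the operator $\mathscr{M}$ to each side and compare. Set $\yy=(m-n+1,1,\dots,1)$, which has positive entries because $m\geq n$. Then the left-hand side is exactly $g_{\Pf_n(\yy),0}(t)$ from \eqref{eq. g polynomial of y-parking}. By Theorem~\ref{The. magic for a y-parking functions} with $l=0$, together with the identification $\Pf_n(\yy)=\Pf(n,m)$, we get
\[ n!\,\mathscr{M}\big(g_{\Pf_n(\yy),0}\big)(t)=\sum_{w\in\Pf(n,m)} t^{\lucky_{m,0}(w)}. \]
By the Kenyon--Yin formula \eqref{eq. lucky polynomial}, this equals $(m-n+1)t\prod_{i=1}^{n-1}(i+(m-i+1)t)$.

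Next I compute $\mathscr{M}$ of the right-hand side using Corollary~\ref{coro:magic-binomials}. I write $\bbinom{(m+1)t}{n}=\binom{(m+1)t+n-1}{n}$, so $a=m+1$, $b=n-1$, $c=n$. The corollary gives
\[ n!\,\mathscr{M}=\prod_{i=0}^{n-1}\big(n-1-i+(m+2+i-n)t\big). \]
Reindexing with $k=n-1-i$ turns this into $\prod_{k=0}^{n-1}(k+(m-k+1)t)$. The $k=0$ factor is $(m+1)t$, and the remaining factors are exactly $\prod_{k=1}^{n-1}(k+(m-k+1)t)$. Multiplying by the constant $\frac{m-n+1}{m+1}$, which commutes with the linear map $\mathscr{M}$ on degree-$n$ polynomials, gives $n!\,\mathscr{M}(\mathrm{RHS})=(m-n+1)t\prod_{k=1}^{n-1}(k+(m-k+1)t)$. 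This is the same as for the left-hand side.

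To conclude, I note that $\mathscr{M}_n$ is injective on polynomials of degree at most $n$, since $\{t^i(1+t)^{n-i}\}_{i=0}^n$ is a basis, and both sides have degree exactly $n$. Hence the two sides coincide.

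The step needing the most care is the degree bookkeeping for $\mathscr{M}$. Each summand on the left has degree $\sum s_i=n$ in $t$, so $\mathscr{M}$ acts linearly on the sum, as in Lemma~\ref{lem. trasformacion polynomiasl}. The right-hand side also has degree $n$, because $m+1>0$. I would also check the edge case $n=1$ directly: the left side is $(m)t$, and the right side is $\frac{m}{m+1}(m+1)t$, which agree.
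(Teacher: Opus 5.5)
Your proposal is correct and follows the paper's proof essentially verbatim. Like the paper, you specialize Theorem~\ref{The. magic for a y-parking functions} at $l=0$ with $\yy=(m-n+1,1,\dots,1)$, match it against the Kenyon--Yin formula \eqref{eq. lucky polynomial}, recognize the resulting product as $n!\,\mathscr{M}(p_{m+1,n-1,n})$ via Corollary~\ref{coro:magic-binomials}, and invert $\mathscr{M}$; your explicit degree bookkeeping and $n=1$ check simply make the final injectivity step a bit more careful than the paper's.
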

\begin{proof}
We can replace the left-hand side of \eqref{eq. lucky polynomial} with the sum $\sum_{\ss \in \mathcal{I}_{n}} f_{\ss,0}(t,\yy)$ by Theorem \ref{The. magic for a y-parking functions}, therefore we have that:
\begin{align*}
    \sum_{\ss \in \mathcal{I}_{n}} f_{\ss,0}(t,\yy) &= (m - n + 1)t\prod_{i=1}^{n-1}(i+(m-i+1)t) \\
    &= \frac{(m - n + 1)}{m+1}\prod_{i=0}^{n-1}(i+(m+1-i)t)\\
    &= \frac{(m - n + 1)}{m+1}\prod_{i=0}^{n-1}(n-1 - i+(m + 1 +i - (n-1))t).
\end{align*}
Note that the right-hand side is a polynomial of the form $n!\cdot \mathscr{M}(p_{m+1,n-1,n}(t))$ as in Corollary~\ref{coro:magic-binomials}, and the left-hand side is the magic polynomial of $g_{\Pf_n(\yy),0}(t)$, We conclude the identity by applying the transformation $\mathscr{M}^{-1} / n!$ to both sides of the equality.
\end{proof}

\subsection{Failure of magic positivity for \texorpdfstring{$\yy$}{y}-generalized permutohedra}

As we mentioned in the introduction, there are counterexamples to the magic positivity of Ehrhart polynomials of $\yy$-generalized permutohedra. The following example provides one such instance of this failure of magic positivity. 

\begin{example} \label{ex:y perm not magic positive}
For $m=3$, $n=7$, and $H\subset K_{3,5}$ with edges $$(1,1),(1,2),(2,2),(2,3),(2,4),(3,4),(3,5),(3,7),
$$
the generalized permutohedron $\mathcal{P}_H({\bf 1}) = \Delta_{\{1,2\}}+\Delta_{\{2,3,4\}}+\Delta_{\{4,5\}}$ has Ehrhart polynomial $\ehr_{\mathcal{P}_H}(t) = \frac{1}{2} t^{4} + \frac{5}{2} t^{3} + \frac{9}{2} t^{2} + \frac{7}{2} t + 1$ which is not magic positive, since $\mathscr{M}(\ehr_{\mathcal{P}_H}(t)) = -\frac{1}{2} t + 1$.   
\end{example}

\subsection{Magic positivity and determinants} \label{sec:magic kreweras determinants}

Since the Ehrhart polynomial of the Pitman--Stanley polytope admits a determinantal formula and since transformation $\mathscr{M}_d$ interacts well with products, our main result can be recast so as to assert the following curious positivity property of a determinant.

\begin{corollary}
    For $\mathbf{y} \in \mathbb{Z}^n_{>0}$,  the following polynomial has non-negative coefficients:
    \[ p_{\mathbf{y}}(t) = \det \left[ \frac{1}{(j-i+1)!} \prod_{\ell=0}^{i-j} (1-n+t(y_1+\cdots+y_{i} - 1 + n))\right]_{i,j=1}^n\]
\end{corollary}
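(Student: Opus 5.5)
The plan is to apply the operator $\mathscr{M}_n$ to the determinantal formula \eqref{eq:detEhrhartPS} entry by entry. This should show that $p_{\mathbf{y}}(t)=\mathscr{M}_n(\ehr_{\Pi_n(\mathbf{y})})(t)$. Theorem~\ref{thm:main-intro} then finishes the proof: $\mathscr{M}_n(\ehr_{\Pi_n(\mathbf{y})})=\sum_i c_i t^i$, and each $c_i\ge 0$ because $n!\,c_i$ counts $\yy$-parking functions with $i$ lucky cars.

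Write $u_i:=y_1+\cdots+y_i$ and expand the determinant by the Leibniz formula:
\[
\det\left[\binom{tu_i+1}{j-i+1}\right]_{i,j=1}^n=\sum_{\sigma\in\mathfrak{S}_n}\operatorname{sgn}(\sigma)\prod_{i=1}^n\binom{tu_i+1}{\sigma(i)-i+1},
\]
with the usual convention that $\binom{x}{c}=0$ for $c<0$, so permutations with some $\sigma(i)<i-1$ contribute nothing. The key degree bookkeeping is this: since $\mathbf{y}\in\mathbb{Z}^n_{>0}$, every $u_i>0$, so the factor $\binom{tu_i+1}{c}$ has degree exactly $c$ whenever $c\ge 0$. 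Moreover $\sum_i(\sigma(i)-i+1)=n$, so every nonzero summand is a polynomial of exact degree $n$ that factors into pieces of exact degrees $c_i=\sigma(i)-i+1$.

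Since $\mathscr{M}_n$ is linear on polynomials of degree at most $n$, we get
\[
\mathscr{M}_n(\ehr_{\Pi_n(\mathbf{y})})=\sum_{\sigma}\operatorname{sgn}(\sigma)\,\mathscr{M}_n\Big(\prod_i\binom{tu_i+1}{\sigma(i)-i+1}\Big).
\]
Lemma~\ref{lemma:magic-multiplicative} then lets us split each term as $\prod_i\mathscr{M}_{c_i}\binom{tu_i+1}{c_i}$; the exact-degree observation above is what allows us to use the lemma in the form with subscripts equal to the degrees. Each factor is computed by Corollary~\ref{coro:magic-binomials} with $a=u_i$, $b=1$, $c=c_i$:
\[
\mathscr{M}\binom{u_it+1}{c_i}=\frac{1}{c_i!}\prod_{k=0}^{c_i-1}\bigl(1-k+(u_i+k-1)t\bigr).
\]
When $c_i=0$ the factor is the constant $1$, and when $c_i<0$ the entry stays $0$. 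Reassembling the Leibniz expansion gives a determinant whose $(i,j)$ entry is the above product with $c_i=j-i+1$. I will check that this agrees, up to the indexing conventions of the displayed product, with the entries defining $p_{\mathbf{y}}$. If there is a discrepancy, I expect it to be only a relabelling $k\mapsto c-1-k$ of the product index, which leaves the product unchanged.

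The main point needing care, and the only real obstacle, is justifying that $\mathscr{M}$ commutes with the determinant. Lemma~\ref{lemma:magic-multiplicative} requires the degree subscripts to add up correctly, and linearity requires all summands to be taken in the same $\mathscr{M}_n$. Both are guaranteed by the exact-degree observation, which uses $y_1>0$. Once this is in place, $p_{\mathbf{y}}=\mathscr{M}(\ehr_{\Pi_n(\mathbf{y})})$, and nonnegativity of its coefficients is exactly the magic positivity of Theorem~\ref{thm:main-intro}.
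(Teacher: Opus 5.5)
Your argument is correct and is essentially the paper's proof: push $\mathscr{M}$ through the determinant \eqref{eq:detEhrhartPS} using linearity and Lemma~\ref{lemma:magic-multiplicative}, evaluate each entry with Corollary~\ref{coro:magic-binomials} (taking $a=u_i=y_1+\cdots+y_i$, $b=1$, $c=j-i+1$), and conclude from the magic positivity in Theorem~\ref{thm:main-intro}. The check you deferred is not a relabelling, though: the displayed entry contains typos and must be read as $\frac{1}{(j-i+1)!}\prod_{\ell=0}^{j-i}\bigl(1-\ell+t(u_i-1+\ell)\bigr)$, which is exactly what your computation produces. Read literally, with $n$ in place of $\ell$ and upper limit $i-j$, the determinant for $n=2$, $\mathbf{y}=(1,1)$ equals $\tfrac12-2t+\tfrac32t^2$, whereas the intended one is $1+\tfrac12 t$.
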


\begin{proof}
  We apply the linear map $\mathscr{M}$ on the determinant formula \eqref{eq:detEhrhartPS} for $\ehr_{\Pi_n(\yy)}(t)$ and apply Lemma~\ref{lemma:magic-multiplicative} repeatedly and obtain
  \[
  \mathscr{M}(\ehr_{\Pi_n(\yy)}) \,=\, 
  \det\left[\mathscr{M}\binom{t\cdot(y_1+\cdots +y_{i})+1}{j-i+1}\right]_{i,j=1}^n. \]
  Then, using Corollary~\ref{coro:magic-binomials} yields the result directly.
\end{proof}

As explained in \cite{Pitman_Stanley_1999}, the determinant formula for the Ehrhart polynomial of $\Pi_n(\yy)$ is a consequence of Kreweras' formula for the number of {\em plane partitions} of straight shape with entries $1,2$ where the parts of the partition are scaled. Recall that a \emph{plane partition} of a skew shape is the number of fillings of the cells of the Young diagram of the shape such that the rows and columns are weakly decreasing. A similar but unrelated positivity phenomenon of the Kreweras formula was recently found in \cite{ferroni-morales-panova}, namely the number of plane partitions of a fixed skew shape with entries $1,\ldots,t$ is a polynomial with nonnegative coefficients in $t$. Precisely, for every skew shape $\lambda/\mu$, the polynomial
    \[ \Omega_{\lambda/\mu}(t) :=  \det\left[ \binom{t-1 + \lambda_i-\mu_j }{\lambda_i-\mu_j-i+j}
\right]_{i,j=1}^{\ell}\]
has nonnegative coefficients.

We are thus motivated to inquire if there exists a larger, more general, framework which can be used to tackle simultaneously the positivity of the coefficients of the polynomials $p_{\yy}(t)$ and $\Omega_{\lambda/\mu}(t)$. In other words, we ask for a class of univariate polynomials that arise from determinants, have nonnegative coefficients, and which can be specialized to the polynomials $p_{\yy}(t)$ and $\Omega_{\lambda/\mu}(t)$.

\subsection{Skew and Generalized Pitman--Stanley polytopes}

In their original paper \cite{Pitman_Stanley_1999}, Pitman and Stanley introduced two generalizations of the polytope $\Pi_n(\yy)$. The first one is the {\em skew Pitman--Stanley polytope} defined as follows, for $\zz,\yy \in \mathbb{Z}^n_{\geq 0}$, let 
\[
\Pi_n(\yy;\zz) = \left\{ \mathbf{x} \in \mathbb{R}_{\geq 0}^n : \enspace\sum_{i=1}^k z_i \leq \sum_{i=1}^k x_i \leq \sum_{i=1}^k y_i \text{ for each  $k=1,\ldots,n$}\right\}.
\]

The original $\Pi_n(\yy)$ corresponds to the case $\zz = (0,\ldots,0)$. The name comes from the fact that lattice points of this polytope correspond to plane partitions of the skew shape $(x_1+\cdots+x_n,\ldots,x_1)/(z_1+\cdots+z_n,\ldots,z_1)$  with entries $1,2$. The normalized volume of $\Pi_n(\yy;\zz)$ is the number of certain restricted $\yy$-parking functions \cite{gen_PS2}. Nevertheless, as the following example shows, not all skew Pitman--Stanley polytopes have magic positive Ehrhart polynomials.

\begin{example}
For $n=6$, $\yy=(1,1,1,1,1,1)$ and $\zz = (0,0,0,1,1,1)$, the Ehrhart polynomial is $\ehr_{\Pi_6(\yy,\zz)}(t) = \frac{769}{36} t^{6}+\frac{238}{3} t^{5}+\frac{2141}{18} t^{4}+\frac{280}{3} t^{3}+\frac{1483}{36} t^{2}+\frac{59}{6} t +1$ which is not magic positive, since $\mathscr{M}(\ehr_{\Pi_6(\yy,\zz)}(t)) = -\frac{1}{6} t^{5} + \frac{25}{9} t^{4} + \frac{62}{9} t^{3} + \frac{253}{36} t^{2} + \frac{23}{6} t + 1$.

\end{example}

We note that skew Pitman--Stanley polytopes are special cases of \emph{polypositroids} (as defined in the work of Lam and Postnikov \cite{lam-postnikov}). In work of Ferroni, Jochemko, and Schr\"oter \cite[Conjecture~6.1]{ferroni-jochemko-schroter} it is conjectured that $0/1$-polypositroids (i.e., classical positroids) are Ehrhart positive. If this Ehrhart positivity conjecture is true, it would be reasonable to expect it to also hold for polypositroids, and thus in particular for all $\Pi_n(\yy,\zz)$.

A second variant of $\Pi_n(\yy)$ is the {\em $m$-generalized Pitman--Stanley polytope} for a positive integer $m$ defined as follows
\[
\Pi^{(m)}_n(\yy) := \left\{ (x_{i,j}) \in \mathbb{R}_{\geq 0}^{nm} : \enspace \sum_{i=1}^k x_{i,m} \leq \cdots \leq \sum_{i=1}^k x_{i,2} \leq \sum_{i=1}^k x_{i,1} \leq \sum_{i=1}^k y_i \text{ for each  $k=1,\ldots,n$}\right\}.
\]

The original $\Pi_n(\yy)$ corresponds to the case $m=1$. The lattice points of this polytope correspond to plane partitions of the straight shape $(x_1+\cdots+x_n,\ldots,x_1)$ with entries $1,2,\ldots,m+1$. There is also a {\em $m$-generalized skew Pitman--Stanley polytope} refining the two variants mentioned above. The number of faces, Ehrhart polynomial, and volumes of these polytopes have been recently studied in \cite{gen_PS1, gen_PS2}. The following example shows not all $m$-generalized Pitman--Stanley polytopes have magic positive Ehrhart polynomials.

\begin{example}
  For $n=m=2$ and $\yy = (1,1)$ the Ehrhart polynomial  is $\ehr_{\Pi^{(2)}_2({\bf 1})}(t)=\frac23t^4 + \frac{19}{6}t^3 + \frac{16}{3}t^2 + \frac{23}{6}t + 1$ which is not magic positive, since $\mathscr{M}(\ehr_{\Pi^{(2)}_2({\bf 1})}(t))=-\frac{1}{6} t^{2} - \frac{1}{6} t + 1$.  
\end{example}

\bibliographystyle{amsalpha}
\bibliography{bibliography}

@article {branden,
    AUTHOR = {Br{\"a}nd{\'e}n, Petter},
     TITLE = {On linear transformations preserving the {P}\'{o}lya frequency
              property},
   JOURNAL = {Trans. Amer. Math. Soc.},
  FJOURNAL = {Transactions of the American Mathematical Society},
    VOLUME = {358},
      YEAR = {2006},
    NUMBER = {8},
     PAGES = {3697--3716},
      ISSN = {0002-9947},
   MRCLASS = {05A15 (05A05 05A19 20F55 26C10)},
  MRNUMBER = {2218995},
MRREVIEWER = {Toshihiro Watanabe},
       DOI = {10.1090/S0002-9947-06-03856-6},
       URL = {https://doi.org/10.1090/S0002-9947-06-03856-6},
}

@book {stanley-ec1,
    AUTHOR = {Stanley, Richard P.},
     TITLE = {Enumerative combinatorics. {V}olume 1},
    SERIES = {Cambridge Studies in Advanced Mathematics},
    VOLUME = {49},
   EDITION = {Second},
 PUBLISHER = {Cambridge University Press, Cambridge},
      YEAR = {2012},
     PAGES = {xiv+626},
      ISBN = {978-1-107-60262-5},
   MRCLASS = {05-02 (05A15 06-02)},
  MRNUMBER = {2868112},
}

@book{CatalanBook,
 author = {Stanley, Richard P.},
 title = {Catalan numbers},
 isbn = {978-1-107-07509-2; 978-1-107-42774-7; 978-1-139-87149-5},
 year = {2015},
 publisher = {Cambridge: Cambridge University Press},
 language = {English},
 doi = {10.1017/CBO9781139871495},
 zbMATH = {6417735},
 Zbl = {1317.05010}
}

@article {stanley-hstar,
    AUTHOR = {Stanley, Richard P.},
     TITLE = {Decompositions of rational convex polytopes},
   JOURNAL = {Ann. Discrete Math.},
  FJOURNAL = {Annals of Discrete Mathematics},
    VOLUME = {6},
      YEAR = {1980},
     PAGES = {333--342},
   MRCLASS = {52A43},
  MRNUMBER = {593545},
MRREVIEWER = {P. McMullen},
}

@incollection {braun-unimodality,
    AUTHOR = {Braun, Benjamin},
     TITLE = {Unimodality problems in {E}hrhart theory},
 BOOKTITLE = {Recent trends in combinatorics},
    SERIES = {IMA Vol. Math. Appl.},
    VOLUME = {159},
     PAGES = {687--711},
 PUBLISHER = {Springer, [Cham]},
      YEAR = {2016},
      ISBN = {978-3-319-24296-5; 978-3-319-24298-9},
   MRCLASS = {52B20 (05A20)},
  MRNUMBER = {3526428},
MRREVIEWER = {Ruriko\ Yoshida},
       DOI = {10.1007/978-3-319-24298-9\{_}27}

@incollection {stanley-unimodality,
    AUTHOR = {Stanley, Richard P.},
     TITLE = {Log-concave and unimodal sequences in algebra, combinatorics,
              and geometry},
 BOOKTITLE = {Graph theory and its applications: {E}ast and {W}est ({J}inan,
              1986)},
    SERIES = {Ann. New York Acad. Sci.},
    VOLUME = {576},
     PAGES = {500--535},
 PUBLISHER = {New York Acad. Sci., New York},
      YEAR = {1989},
   MRCLASS = {05E15 (05E10 20C15 52B20)},
  MRNUMBER = {1110850},
MRREVIEWER = {L. Bruce Richmond},
       DOI = {10.1111/j.1749-6632.1989.tb16434.x},
       URL = {https://doi.org/10.1111/j.1749-6632.1989.tb16434.x},
}

@article{Baldoni_Vergne_2008,
  title={Kostant partitions functions and flow polytopes},
  author={Baldoni, Welleda and Vergne, Mich\`ele},
  journal={Transform. Groups},
  volume={13},
  number={3-4},
  pages={447--469},
  year={2008},
  publisher={Springer}
}

@article {ferroni,
    AUTHOR = {{Ferroni}, Luis},
     TITLE = {Matroids are not {E}hrhart positive},
   JOURNAL = {Adv. Math.},
  FJOURNAL = {Advances in Mathematics},
    VOLUME = {402},
      YEAR = {2022},
     PAGES = {Paper No. 108337, 27},
      ISSN = {0001-8708},
   MRCLASS = {52B40 (05B35 52B20)},
  MRNUMBER = {4396506},
MRREVIEWER = {Margaret M. Bayer},
       DOI = {10.1016/j.aim.2022.108337},
       URL = {https://doi.org/10.1016/j.aim.2022.108337},
}

@article {hibi-higashitani-yoshida-tsuchiya,
    AUTHOR = {Hibi, Takayuki and Higashitani, Akihiro and Tsuchiya, Akiyoshi
              and Yoshida, Koutarou},
     TITLE = {Ehrhart polynomials with negative coefficients},
   JOURNAL = {Graphs Combin.},
  FJOURNAL = {Graphs and Combinatorics},
    VOLUME = {35},
      YEAR = {2019},
    NUMBER = {1},
     PAGES = {363--371},
      ISSN = {0911-0119},
   MRCLASS = {52B20 (52B11)},
  MRNUMBER = {3898396},
MRREVIEWER = {Tam\'{a}s L\'{a}szl\'{o}},
       DOI = {10.1007/s00373-018-1990-9},
       URL = {https://doi.org/10.1007/s00373-018-1990-9},
}

@ARTICLE{ferroni-higashitani,
       author = {{Ferroni}, Luis and {Higashitani}, Akihiro},
        title = "{Examples and counterexamples in Ehrhart theory}",
    JOURNAL = {EMS Surv. Math. Sci.},
  FJOURNAL = {EMS Surveys in Mathematical Sciences},
    year = {2024},
    note = {to appear.}
}

@incollection {liu,
    AUTHOR = {Liu, Fu},
     TITLE = {On positivity of {E}hrhart polynomials},
 BOOKTITLE = {Recent trends in algebraic combinatorics},
    SERIES = {Assoc. Women Math. Ser.},
    VOLUME = {16},
     PAGES = {189--237},
 PUBLISHER = {Springer, Cham},
      YEAR = {2019},
   MRCLASS = {05A15 (05A20 52B20)},
  MRNUMBER = {3969575},
MRREVIEWER = {Matthias Beck},
       DOI = {10.1007/978-3-030-05141-9\_6},
       URL = {https://doi.org/10.1007/978-3-030-05141-9_6},
}

@article {mcmullen,
    AUTHOR = {McMullen, Peter},
     TITLE = {Valuations and {E}uler-type relations on certain classes of
              convex polytopes},
   JOURNAL = {Proc. London Math. Soc. (3)},
  FJOURNAL = {Proceedings of the London Mathematical Society. Third Series},
    VOLUME = {35},
      YEAR = {1977},
    NUMBER = {1},
     PAGES = {113--135},
      ISSN = {0024-6115,1460-244X},
   MRCLASS = {52A25},
  MRNUMBER = {448239},
MRREVIEWER = {P.\ R.\ Goodey},
       DOI = {10.1112/plms/s3-35.1.113},
       URL = {https://doi.org/10.1112/plms/s3-35.1.113},
}

@article {ehrhart,
    AUTHOR = {Ehrhart, Eug\`{e}ne},
     TITLE = {Sur les poly\`{e}dres rationnels homoth\'{e}tiques \`{a} {$n$}
              dimensions},
   JOURNAL = {C. R. Acad. Sci. Paris},
  FJOURNAL = {Comptes Rendus Hebdomadaires des S\'{e}ances de l'Acad\'{e}mie
              des Sciences},
    VOLUME = {254},
      YEAR = {1962},
     PAGES = {616--618},
      ISSN = {0001-4036},
   MRCLASS = {10.25 (52.10)},
  MRNUMBER = {130860},
}

@article {lam-postnikov,
    AUTHOR = {Lam, Thomas and Postnikov, Alexander},
     TITLE = {Polypositroids},
   JOURNAL = {Forum Math. Sigma},
  FJOURNAL = {Forum of Mathematics. Sigma},
    VOLUME = {12},
      YEAR = {2024},
     PAGES = {Paper No. e42, 67},
      ISSN = {2050-5094},
   MRCLASS = {52B40 (05B35 20F55 52B12)},
  MRNUMBER = {4718184},
MRREVIEWER = {Winfried\ Hochst\"attler},
       DOI = {10.1017/fms.2024.11},
       URL = {https://doi.org/10.1017/fms.2024.11},
}

@article{avila-ferroni-morales-abstract,
    author = {Nicolas Avila and Luis Ferroni and Alejandro H. Morales},
    title = {Luck and magic for {P}itman--{S}tanley polytopes},
    year = {2026},
    note = {Extended abstract for FPSAC 2026}
}

@ARTICLE{branden-ferroni-jochemko,
       author = {{Br{\"a}nd{\'e}n}, Petter and {Ferroni}, Luis and {Jochemko}, Katharina},
        title = "{Preservation of inequalities under Hadamard products}",
      journal = {Trans. Amer. Math. Soc.},
         year = 2026,
       note = {to appear.}
}

@article {liu-xiao,
    AUTHOR = {Liu, Yanxin and Xiao, Qiqi},
     TITLE = {Preservation of log-concavity on linear transformations},
   JOURNAL = {Discrete Math.},
  FJOURNAL = {Discrete Mathematics},
    VOLUME = {349},
      YEAR = {2026},
    NUMBER = {7},
     PAGES = {Paper No. 115083},
      ISSN = {0012-365X,1872-681X},
   MRCLASS = {05A20 (15)},
  MRNUMBER = {5040015},
       DOI = {10.1016/j.disc.2026.115083},
       URL = {https://doi.org/10.1016/j.disc.2026.115083},
}

@misc{athanasiadis-xiao-yan,
         author = {{Athanasiadis}, Christos A. and {Xiao}, Qiqi and {Yan}, Xue},
        title = "{Lattice point enumeration of some arbor polytopes}",
      journal = {arXiv e-prints},
         year = 2026,
        month = mar,
          eid = {arXiv:2603.11654},
        pages = {arXiv:2603.11654},
          doi = {10.48550/arXiv.2603.11654},
archivePrefix = {arXiv},
       eprint = {2603.11654},
 primaryClass = {math.CO},
       adsurl = {https://ui.adsabs.harvard.edu/abs/2026arXiv260311654A}
}

@article{avila-ferroni-morales-hstar,
    author = {Nicolas Avila and Luis Ferroni and Alejandro H. Morales},
    title = {The $h^*$-polynomials of $\mathbf{y}$-generalized permutohedra},
    year = {2026},
    note = {to appear}
}

@misc{DHS,
       author = {{Deligeorgaki}, Danai and {Han}, Bin and {Solus}, Liam},
        title = "{Colored Multiset Eulerian Polynomials}",
      journal = {arXiv e-prints},
         year = 2024,
        month = jul,
          eid = {arXiv:2407.12076},
        pages = {arXiv:2407.12076},
          doi = {10.48550/arXiv.2407.12076},
archivePrefix = {arXiv},
       eprint = {2407.12076},
 primaryClass = {math.CO},
       adsurl = {https://ui.adsabs.harvard.edu/abs/2024arXiv240712076D}
}

@incollection {Yan_pf_handbook,
    AUTHOR = {Yan, Catherine H.},
     TITLE = {Parking functions},
 BOOKTITLE = {Handbook of enumerative combinatorics},
    SERIES = {Discrete Math. Appl. (Boca Raton)},
     PAGES = {835--893},
 PUBLISHER = {CRC Press, Boca Raton, FL},
      YEAR = {2015}
}

@article {ferroni-jochemko-schroter,
    AUTHOR = {Ferroni, Luis and Jochemko, Katharina and Schr\"{o}ter, Benjamin},
     TITLE = {Ehrhart polynomials of rank two matroids},
   JOURNAL = {Adv. in Appl. Math.},
  FJOURNAL = {Advances in Applied Mathematics},
    VOLUME = {141},
      YEAR = {2022},
     PAGES = {Paper No. 102410, 26},
      ISSN = {0196-8858},
   MRCLASS = {52B40 (05A15 05B35 26C10 52B20)},
  MRNUMBER = {4461609},
MRREVIEWER = {Ping Zhan},
       DOI = {10.1016/j.aam.2022.102410},
       URL = {https://doi.org/10.1016/j.aam.2022.102410},
}

@article {postnikov,
    AUTHOR = {Postnikov, Alexander},
     TITLE = {Permutohedra, associahedra, and beyond},
   JOURNAL = {Int. Math. Res. Not. IMRN},
  FJOURNAL = {International Mathematics Research Notices. IMRN},
      YEAR = {2009},
    NUMBER = {6},
     PAGES = {1026--1106},
      ISSN = {1073-7928,1687-0247},
   MRCLASS = {05E30},
  MRNUMBER = {2487491},
       DOI = {10.1093/imrn/rnn153},
       URL = {https://doi.org/10.1093/imrn/rnn153},
}

@article{kenyonparkingfunctions,
    AUTHOR = {Kenyon, Richard and Yin, Mei},
     TITLE = {Parking functions: from combinatorics to probability},
   JOURNAL = {Methodol. Comput. Appl. Probab.},
  FJOURNAL = {Methodology and Computing in Applied Probability},
    VOLUME = {25},
      YEAR = {2023},
    NUMBER = {1},
     PAGES = {Paper No. 32, 30},
      ISSN = {1387-5841,1573-7713},
   MRCLASS = {60C05 (05A16 05A19 60F10)},
  MRNUMBER = {4549917},
       DOI = {10.1007/s11009-023-10022-5},
       URL = {https://doi.org/10.1007/s11009-023-10022-5},
}

@ARTICLE{konoike,
       author = {{Konoike}, Masato},
        title = "{A new class of magic positive Ehrhart polynomials of reflexive polytopes}",
      journal = {arXiv e-prints},
         year = 2024,
        month = sep,
          eid = {arXiv:2409.16648},
        pages = {arXiv:2409.16648},
          doi = {10.48550/arXiv.2409.16648},
archivePrefix = {arXiv},
       eprint = {2409.16648},
 primaryClass = {math.CO},
       adsurl = {https://ui.adsabs.harvard.edu/abs/2024arXiv240916648K}
}

@ARTICLE{konoike2,
       author = {{Konoike}, Masato},
        title = "{On the magic positivity of Ehrhart polynomials of dilated polytopes}",
      journal = {arXiv e-prints},
         year = 2025,
        month = apr,
          eid = {arXiv:2504.21395},
        pages = {arXiv:2504.21395},
          doi = {10.48550/arXiv.2504.21395},
archivePrefix = {arXiv},
       eprint = {2504.21395},
 primaryClass = {math.CO},
       adsurl = {https://ui.adsabs.harvard.edu/abs/2025arXiv250421395K}
}

@article{gen_PS1,
    AUTHOR = {Dugan, William T. and Hegarty, Maura and Morales, Alejandro H.
              and Raymond, Annie},
     TITLE = {Generalized {P}itman--{S}tanley {P}olytope: {V}ertices and
              {F}aces},
   JOURNAL = {Discrete Comput. Geom.},
  FJOURNAL = {Discrete \& Computational Geometry. An International Journal
              of Mathematics and Computer Science},
    VOLUME = {74},
      YEAR = {2025},
    NUMBER = {2},
     PAGES = {492--543},
      ISSN = {0179-5376,1432-0444},
   MRCLASS = {05C21 (05A15 05A19 06A07 52B05)},
  MRNUMBER = {4961368},
       DOI = {10.1007/s00454-024-00704-3},
       URL = {https://doi.org/10.1007/s00454-024-00704-3},
}

@article{Pitman_Stanley_1999, 
 AUTHOR = {Stanley, Richard P. and Pitman, Jim},
     TITLE = {A polytope related to empirical distributions, plane trees,
              parking functions, and the associahedron},
   JOURNAL = {Discrete Comput. Geom.},
  FJOURNAL = {Discrete \& Computational Geometry. An International Journal
              of Mathematics and Computer Science},
    VOLUME = {27},
      YEAR = {2002},
    NUMBER = {4},
     PAGES = {603--634},
       DOI = {10.1007/s00454-002-2776-6},
       url={http://arxiv.org/abs/math/9908029}
}

@misc{gen_PS2,
title={Generalized {P}itman--{S}tanley polytopes: volume and {E}hrhart polynomials},
author={William T. Dugan and Maura Hegarty and Alejandro H. Morales and Annie Raymond},
note={to appear}}

@article{genPFPoly,
 author = {Hanada, Mitsuki and Lentfer, John and Vindas-Mel{\'e}ndez, Andr{\'e}s R.},
 title = {Generalized parking function polytopes},
 fjournal = {Annals of Combinatorics},
 journal = {Ann. Comb.},
 issn = {0218-0006},
 volume = {28},
 number = {2},
 pages = {575--613},
 year = {2024},
 doi = {10.1007/s00026-023-00671-1},
 zbMATH = {7851997},
 Zbl = {1539.05006}
}

@article {KonvPak,
    AUTHOR = {Konvalinka, Matja\v{z} and Pak, Igor},
     TITLE = {Triangulations of {C}ayley and {T}utte polytopes},
   JOURNAL = {Adv. Math.},
  FJOURNAL = {Advances in Mathematics},
    VOLUME = {245},
      YEAR = {2013},
     PAGES = {1--33},
       DOI = {10.1016/j.aim.2013.06.012}
}

@article {KonvPak2,
    AUTHOR = {Konvalinka, Matja\v{z} and Pak, Igor},
     TITLE = {Cayley compositions, partitions, polytopes, and geometric
              bijections},
   JOURNAL = {J. Combin. Theory Ser. A},
  FJOURNAL = {Journal of Combinatorial Theory. Series A},
    VOLUME = {123},
      YEAR = {2014},
     PAGES = {86--91},
       DOI = {10.1016/j.jcta.2013.11.008}
}

@article{StanleyYin,
    AUTHOR = {Stanley, Richard P. and Yin, Mei},
     TITLE = {Some enumerative properties of parking functions},
   JOURNAL = {Comb. Theory},
  FJOURNAL = {Combinatorial Theory},
    VOLUME = {5},
      YEAR = {2025},
    NUMBER = {4},
     PAGES = {Paper No. 13, 39},
      ISSN = {2766-1334},
   MRCLASS = {05A15 (05A19 60C05)},
  MRNUMBER = {5024855},
}

@article{harris2024parkingfunctionsfixedset,
       author = {{Harris}, Pamela E. and {Martinez}, Lucy},
        title = "{Parking functions with a fixed set of lucky cars}",
      journal = {arXiv e-prints},
         year = 2024,
        month = oct,
          eid = {arXiv:2410.08057},
        pages = {arXiv:2410.08057},
          doi = {10.48550/arXiv.2410.08057},
archivePrefix = {arXiv},
       eprint = {2410.08057},
 primaryClass = {math.CO},
       adsurl = {https://ui.adsabs.harvard.edu/abs/2024arXiv241008057H}
}

@ARTICLE{ferreri2025enumeratingvectorparkingfunctions,
       author = {{Ferreri}, Melanie and {Harris}, Pamela E. and {Martinez}, Lucy and {Swartz}, Eric},
        title = "{Enumerating Vector Parking Functions and their Outcomes Based on Specified Lucky Cars}",
      journal = {arXiv e-prints},
         year = 2025,
        month = aug,
          eid = {arXiv:2508.13917},
        pages = {arXiv:2508.13917},
          doi = {10.48550/arXiv.2508.13917},
archivePrefix = {arXiv},
       eprint = {2508.13917},
 primaryClass = {math.CO},
       adsurl = {https://ui.adsabs.harvard.edu/abs/2025arXiv250813917F}
}

@article{GesselSeo,
 author = {Gessel, Ira M. and Seo, Seunghyun},
 title = {A refinement of {Cayley}'s formula for trees},
 fjournal = {The Electronic Journal of Combinatorics},
 journal = {Electron. J. Comb.},
 issn = {1077-8926},
 volume = {11},
 number = {2},
 pages = {research paper r27, 23},
 year = {2006},
 url = {https://eudml.org/doc/125512},
 zbMATH = {5005208},
 Zbl = {1080.05005}
}

@ARTICLE{ferroni-morales-panova,
       author = {{Ferroni}, Luis and {Morales}, Alejandro H. and {Panova}, Greta},
        title = "{Skew shapes, Ehrhart positivity and beyond}",
      journal = {arXiv e-prints},
         year = 2025,
        month = mar,
          eid = {arXiv:2503.16403},
        pages = {arXiv:2503.16403},
          doi = {10.48550/arXiv.2503.16403},
archivePrefix = {arXiv},
       eprint = {2503.16403},
 primaryClass = {math.CO},
       adsurl = {https://ui.adsabs.harvard.edu/abs/2025arXiv250316403F}
}

@article{KungYan,
 author = {Kung, Joseph P. S. and Yan, Catherine},
 title = {Gon{\v{c}}arov polynomials and parking functions},
 fjournal = {Journal of Combinatorial Theory. Series A},
 journal = {J. Comb. Theory, Ser. A},
 issn = {0097-3165},
 volume = {102},
 number = {1},
 pages = {16--37},
 year = {2003},
 doi = {10.1016/S0097-3165(03)00009-8}
}
\end{document}